\documentclass[a4paper,12pt,reqno]{amsart}
\usepackage{geometry}
\geometry{margin=1in}
\usepackage[utf8]{inputenc}

\usepackage{amsfonts}  
\usepackage{amsthm}    
\usepackage{amsmath}   
\usepackage{bm}        
\usepackage{amssymb}   
\usepackage{commath}	

\usepackage[dvipsnames, table]{xcolor}	
\usepackage{graphicx}  
\usepackage{tikz}      
\usepackage{tikz-cd}   
\usepackage{pgfplots}  
\pgfplotsset{compat=1.18}
\usetikzlibrary{arrows.meta, positioning, calc, math}  
\tikzset{>={Classical TikZ Rightarrow[width=7pt, length=5pt]}}
\usetikzlibrary{cd}  

\usepackage{array}     
\usepackage{xtab}      
\usepackage{url}

\usepackage{comment}
\usepackage[T1]{fontenc} 
\usepackage{lmodern} 
\usepackage{multicol}
\usepackage{pdflscape, adjustbox}	
\usepackage{parskip}       
\usepackage{mathrsfs}      
\usepackage{fancyhdr}      
\usepackage[hidelinks]{hyperref}  
\usepackage{mathtools}     
\usepackage[shortlabels]{enumitem}  
\usepackage{witharrows}    
\usepackage{float}         
\usepackage{makecell}
\usepackage{cleveref}	

\newcommand{\Z}{\ensuremath{\mathbb{Z}}}
\newcommand{\Q}{\ensuremath{\mathbb{Q}}}
\newcommand{\N}{\ensuremath{\mathbb{N}}}
\newcommand{\R}{\ensuremath{\mathbb{R}}}
\newcommand{\F}{\ensuremath{\mathbb{F}}}
\newcommand{\p}{\ensuremath{\mathfrak{p}}}
\newcommand{\A}{\ensuremath{\mathfrak{a}}}
\newcommand{\g}{\ensuremath{\mathfrak{g}}}
\newcommand{\OO}{\ensuremath{\mathcal{O}}}

\DeclareMathOperator{\Ann}{ann}

\DeclareMathOperator{\Min}{min}

\DeclareMathOperator{\Mod}{mod}
\DeclareMathOperator{\End}{End}

\DeclareMathOperator{\Lcm}{lcm}

\newtheorem{prop}{Proposition}[section]

\newtheorem{theorem}[prop]{Theorem}
\newtheorem{lemma}[prop]{Lemma}

\newtheorem{cor}[prop]{Corollary}

\theoremstyle{definition}
\newtheorem{remark}[prop]{Remark}
\newtheorem{example}[prop]{Example}

\newtheorem{defn}[prop]{Definition}


\numberwithin{equation}{section}

\crefname{prop}{Proposition}{Propositions}
\Crefname{prop}{Proposition}{Propositions}

\crefname{cor}{Corollary}{Corollaries}
\Crefname{cor}{Corollary}{Corollaries}

\crefname{defn}{Definition}{Definitions}
\Crefname{defn}{Definition}{Definitions}

\makeatletter
\renewcommand{\textcolor}[2]{#2}
\makeatother
\setcounter{page}{2}
\include{contents}
\pagenumbering{arabic}

\begin{document}
\author{Edison H L Au-Yeung}
\title[Explicit valuation of elliptic nets for elliptic curves with CM]{Explicit valuation of elliptic nets for elliptic curves with complex multiplication}
\address{Mathematics Institute, Zeeman Building, University of Warwick, Coventry CV4 7AL, England}
\email{hang.au-yeung@warwick.ac.uk}
\date{\today}
\thanks{The author is supported by the Warwick Mathematics Institute Centre for Doctoral Training, and gratefully acknowledges funding from the University of Warwick.}
\begin{abstract}
Division polynomials associated to an elliptic curve $E/K$ are polynomials $\phi_n, \psi_n^2$ that arise from the sequence of points $\{nP\}_{n \in \N}$ on this curve. If one wishes to study $\Z$--linear combination of points on $E(K)$, we can use net polynomials $\Phi_{\bm{v}}, \Psi_{\bm{v}}^2$ which are higher--dimensional \textcolor{red}{analogues} of division polynomials. It turns out they are also elliptic nets, an $n$--dimensional array with values in $K$ satisfying the same nonlinear recurrence relation that division polynomials do as well. Now further assume the elliptic curve $E/K$ has complex multiplication by an order of a quadratic imaginary field $F \subseteq K$, we will prove a formula for the common valuation of $\Phi_{\bm{v}}$ and $\Psi_{\bm{v}}^2$ associated to multiples of points by elements of an order in $F$. As an application, we will use the formula to show that elliptic divisibility sequences associated to multiples of points indexed by elements of an order also satisfy a recurrence relation when indexed by elements of an order, subject to certain conditions on the indices.

\end{abstract}
\maketitle
\setcounter{tocdepth}{1}
\tableofcontents
\section{Introduction}
\subsection{Motivation}
Let $E/\Q$ be an elliptic curve defined over the rationals with the following Weierstrass equation:
\begin{equation}\label{EC}
E\colon y^2 + a_1xy + a_3y = x^3 + a_2x^2 + a_4x + a_6;\, a_i \in \Z
\end{equation}

Let $P=(x, y)$ be a non-torsion point on $E(\Q)$, $n$ an integer. Then we can express the point $nP$ with the $n$-th division polynomials or the numerical values directly:
\begin{equation}\label{expression}
x(nP)=\frac{\phi_{n}(x(P))}{\psi_{n}^{2}(x(P))}=\frac{A_{n}(E, P)}{B_{n}^{2}(E, P)}\,\, \text{with} \,\, (A_{n}, B_{n})=1 \,\, \text{and} \,\, B_{n}>0.
\end{equation}
with $A_{n}(E, P)$ and $B_{n}(E, P)$ two coprime integers and $B_{n}(E, P) \geq 0$. In \cite[Exercise 3.7]{Silverman1}, Silverman mentioned that the division polynomials $\{\psi_{n}\}_{n \in \N}$ are a sequence of rational functions on $E$ that satisfy the recurrence relation 
\begin{equation}\label{recurrence}
\psi_{n+m}\psi_{n-m}\psi_r^2 = \psi_{m+r}\psi_{m-r}\psi_n^2 -\psi_{n+r}\psi_{n-r}\psi_m^2 \,\, \text{for any}\,\, n>m>r. 
\end{equation}
Division polynomials were introduced to compute scalar multiplications of points on elliptic curves. However, when evaluated at a point on an elliptic curve, these polynomials are not usually coprime. This has led to an interest in the quantity \(\min(\nu(\phi_{n}(P)), \nu(\psi_n^2(P)))\) for a fixed discrete valuation $\nu$, commonly referred to as the \emph{cancellation exponent}. By understanding or estimating this quantity, one can address problems related to Siegel's famous theorem, which states that any elliptic curve \(E/\Q\) has only finitely many integral points.

For instance, Ayad used this concept to compute \(S\)-integral points of an elliptic curve of rank 1 (see \cite[Sections 6--8]{Ayad}). Subsequently, Ingram \cite{Ingram} estimated the height of such points by bounding \(|\psi_{n}(P)|\) in terms of the denominator of \([n]P\), a process that involves analysing the valuations of \(\psi_{n}(P)\) at each prime. Building on this work, Stange \cite{Stange2} refined Ingram's results, obtaining a bound that depends on the height ratio \(h(E)/\hat{h}(P)\).
Meanwhile, Amir \cite{Amir} focused on Mordell curves \(E_B: y^2 = x^3 + B\), demonstrating that, with finitely many exceptions, there are at most two values \(1 < n < 11\) for which \([n]P\) is integral. While this significantly improves the bounds, it still depends on the valuation of division polynomials.

There have already been several attempts to find a formula for the cancellation exponent between the division polynomials \(\psi_n^2\) and \(\phi_n\). For example, Cheon \cite{Cheon} provides a recursive formula at singular primes, utilising the set of points with nonsingular reduction \(E_{0}(K) \subseteq E(K)\), which is a subgroup of finite index. Stange \cite{Stange2}, on the other hand, presents a closed formula whose parameters depend on the reduction properties of \(P\) and \(E\). Yabuta and Voutier gave a formula in terms of the Kodaira symbol in \cite{Yabuta}, while Naskr\k{e}cki and Verzobio extended their results to elliptic curves over function fields in \cite{Verzobio2}. Understanding the cancellation exponent is particularly useful for the study of elliptic divisibility sequences (EDSs). This is closely related to the application to Siegel's Theorem mentioned above, as finding all \(n\) for which \([n]P\) is integral is equivalent to finding all \(n\) such that the denominator of the coordinates of the \(n\)-th term, \(B_n\), equals \(1\).

If the elliptic curve \(E\) now has complex multiplication (CM), it is natural to ask for objects with properties like division polynomials and EDSs, indexed by elements of an order of a quadratic imaginary field \(\End(E)\), generated by the basis set \(\{1, \omega\}\), instead of just by integers. There have already been some attempts to extend definitions and results related to EDSs and division polynomials to the case where \(E\) has complex multiplication. For instance, Streng \cite{Streng} extended the definition of EDSs for elliptic curve points and established results concerning primitive divisors. Meanwhile, Satoh \cite{Satoh} proved the existence of generalised division polynomials for elliptic curves with complex multiplication and demonstrated that these polynomials satisfy certain recurrence relations. Therefore, if one also derives a formula for the cancellation exponent of CM division polynomials, it becomes possible to extend all the aforementioned applications to CM elliptic curves, taking the additional CM points into account. 

In this paper, we define CM division polynomials using elliptic nets, as defined in \Cref{elliptic net}, rather than Satoh's approach for two reasons: first, the behaviour of the generalised division polynomials defined by Satoh in \cite{Satoh} depends on whether or not the points in the kernel \(E[\alpha]\) sum to zero and whether $2$ ramifies, splits or remains inert (interested readers are encouraged to see \cite{Satoh} or \cite{Stange4}). Second, elliptic nets are a `higher-rank' generalisation of EDSs. In \cite{Stange2}, it was noted that analogous results for bounding integral points among linear combinations \([n]P + [m]Q\) currently seem out of reach. However, if an elliptic curve \(E\) has complex multiplication by \(\Z[\omega]\), one can consider the \(\Z\)-linear combination \textcolor{red}{of points} \([n]P + [m]\omega P\), which is effectively a multiple of a point \([n + m\omega]P\). Specifically, the author is now attempting to apply the explicit common valuation formula to use in the same spirit by generalising an argument of Ingram in \cite{Ingram}, with the aim of producing a bound on $n+m\omega$ with the second largest norm such that the point $[n+\omega]P$ is integral. Thus, we believe that defining CM elliptic divisibility sequences using elliptic nets could lead to improved bounds on counting problems and serve as a prototype for a linear combination version of the integral point problem, which is the motivation behind our interest in this subject. 

\subsection{Main results}
In this paper, we extend Cheon and Hahn's result for the cancellation exponent below to curves with complex multiplication, allowing us to index the `division polynomials' by an order in a quadratic imaginary field \(\End(E)=\Z[\omega]\), instead of just integers. For the convenience of reader, their result is repeated here. Let $K$ be a number field and $\nu$ the discrete valuation related to a prime ideal $\p$ of $\OO_K$ with $\nu(\pi)=1$ for a uniformiser $\pi$ of $\p$. Define 
\[
g_{n, \nu}(P)=\min\left(2\nu\left(\psi_{n}(P)\right), \nu\left(\phi_{n}(P)\right)\right).
\]
\begin{theorem}\label{asymptotic}\cite[Theorem 4]{Cheon}
Let E be an elliptic curve defined by a Weierstrass equation with coefficients in $\OO_{K}$. Let $P$ be a non-torsion point of $E(K)$ and assume $\nu(x(P)) \geq 0$. Let $r>1$ be the order of the point $P$ in the group $E(K)/E_{0}(K)$. For $n>0$, $g_{n, \nu}(P)$ is asymptotically equal to a quadratic function. More precisely,
\begin{equation}
g_{n, \nu}(P) = \begin{cases}
\mu t^{2}, & \text{if $n=tr$.} \\
4\mu t^{2} \pm 2\left(2\nu\left(\frac{\psi_{k}(P)}{\psi_{r-k}(P)}\right)+\mu\right)t+ 2 \nu(\psi_{k}(P)\textcolor{red}{)}, &\text{if $n=2tr\pm k$ with $1 \leq k < r$},
\end{cases}
\end{equation}
where $\mu = g_{r, \nu}(P)$. 
\end{theorem}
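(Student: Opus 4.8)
The plan is to remove $\phi_n$ entirely and reduce the statement to a description of the single valuation sequence $v_n := \nu(\psi_n(P))$. Using the standard identity $\phi_n = x(P)\psi_n(P)^2 - \psi_{n-1}(P)\psi_{n+1}(P)$, which is precisely the numerator of $x([n]P) = x(P) - \psi_{n-1}(P)\psi_{n+1}(P)/\psi_n(P)^2$ behind \eqref{expression}, together with the hypothesis $\nu(x(P)) \ge 0$ (so that $x(P)\psi_n(P)^2$ has valuation at least $2v_n$), a short case analysis on whether $v_{n-1}+v_{n+1}$ is less than, equal to, or greater than $2v_n$ shows in every case that
\[
g_{n,\nu}(P) = \min\bigl(2v_n,\; v_{n-1}+v_{n+1}\bigr).
\]
Thus it suffices to compute $(v_n)$ explicitly; the minimum simply records whether $x([n]P)$ is $\nu$-integral (then $g = 2v_n$) or has a pole at $\nu$ (then $g = v_{n-1}+v_{n+1}$).

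Next I would encode the arithmetic input. Since $r$ is the order of $P$ in the component group $E(K)/E_0(K)$, we have $[r]P \in E_0(K)$ while $[j]P \notin E_0(K)$ for $0 < j < r$, so the class of $[n]P$ in this finite group depends only on $n \bmod r$. This periodicity forces the coarse shape of $(v_n)$ to be quasi-quadratic with period $r$: conceptually, $v_n$ differs from a quadratic in $n$ by the N\'eron local height $\hat\lambda_\nu([n]P)$, which is constant on each coset of $E_0(K)$. The finitely many seed values $v_1,\dots,v_r$ --- equivalently the base invariant $\mu = g_{r,\nu}(P)$ together with $\nu(\psi_k(P))$ for $1\le k<r$ --- are the data that survive into the final formula, and I would compute $\mu$ directly from $v_{r-1},v_r,v_{r+1}$ via the first paragraph.

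To propagate these seeds to all $n$ I would pass \eqref{recurrence} to valuations. Writing the recurrence with third index $c$ (to avoid clashing with the order $r$) and taking $\nu$ of both sides yields the min-plus relation
\[
v_{n+m}+v_{n-m}+2v_c = \min\bigl(v_{m+c}+v_{m-c}+2v_n,\; v_{n+c}+v_{n-c}+2v_m\bigr),
\]
valid whenever the two terms on the right have distinct valuations. Solving this tropical recurrence --- most efficiently through its duplication specialisations, which send the index $t$ to $2t$ and match the $2tr\pm k$ shape of the theorem --- produces a piecewise-quadratic formula for $v_n$, one quadratic per residue class. Feeding this back into the first paragraph gives the two cases: for $n=tr$ the minimum is realised by $2v_{tr}$ and returns $\mu t^2$; for $n=2tr\pm k$ the other branch dominates, the leading term $4\mu t^2$ is the quadratic growth, the constant $2\nu(\psi_k(P))$ is the surviving seed, and the linear coefficient $\pm 2\bigl(2\nu(\psi_k(P)/\psi_{r-k}(P))+\mu\bigr)$ records the asymmetry between $[k]P$ and $[-k]P \equiv [r-k]P \pmod{E_0(K)}$.

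The main obstacle is exactly the proviso above: the recurrence only gives $\nu(\mathrm{LHS}) \ge \min$ of the two right-hand terms, with equality precisely when their valuations differ, so the whole argument rests on proving that along the relevant index families the two contributions are never tied, i.e.\ that no hidden cancellation occurs. Establishing this \emph{non-degeneracy}, and separately verifying that the contributions of the kernel of reduction $E_1(K) \subseteq E_0(K)$ and of $\nu(n)$ at the residue characteristic do not perturb the stated quadratics, is where the genuine work lies; pinning down the exact linear coefficient $2\nu(\psi_k/\psi_{r-k})$ rather than merely its leading order is the most delicate part of the bookkeeping.
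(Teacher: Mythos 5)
Your opening reduction is correct and worth keeping: since $\phi_n = x(P)\,\psi_n^2 - \psi_{n+1}\psi_{n-1}$ (the rank-one case of \Cref{net poly}(e)) and $\nu(x(P))\ge 0$, the ultrametric inequality does give $g_{n,\nu}(P)=\min\bigl(2v_n,\,v_{n-1}+v_{n+1}\bigr)$ with $v_n=\nu(\psi_n(P))$. Everything after that, however, is a plan rather than a proof, and the plan walks into exactly the trap that \Cref{error} warns about.

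The structural claim in your second paragraph --- that $v_n$ differs from a quadratic in $n$ by the local height $\lambda([n]P)$, ``which is constant on each coset of $E_0(K)$'' --- is false on the one coset where the first case of the theorem lives. For $[n]P\in E_0(K)$ the height is $\tfrac12\max\bigl(0,-\nu(x([n]P))\bigr)+\tfrac1{12}\nu(\Delta)$ by \Cref{formula}, and this grows with the depth of $[n]P$ in the filtration (\ref{filtration}), for instance when the residue characteristic divides $t$ in $n=tr$; constancy holds only on the singular cosets (cf.\ \Cref{finite values}). This is precisely the error in Lemma 1 of \cite{Cheon} --- the unconditional claim $\nu(x([m]P))=\nu(x(P))-2\nu(m)$ --- that \Cref{error} identifies as the gap in the original proof, and it propagates into your third paragraph: when $[r]P\in E_1(K)$, the minimum for $n=tr$ is \emph{not} realised by $2v_{tr}$ as you assert, but by the other branch $v_{tr-1}+v_{tr+1}=\nu(\phi_{tr})$; indeed $2v_{tr}=\nu(\phi_{tr})-\nu(x([tr]P))>\nu(\phi_{tr})$, and $2v_{tr}$ is not even exactly quadratic in $t$. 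The stated formula $g_{tr,\nu}(P)=\mu t^2$ survives only because $\nu(\phi_{tr})$ \emph{is} exactly quadratic, via the composition identity $\phi_{tr}(P)=\psi_r(P)^{2t^2}\phi_t([r]P)$, the fact that $\phi_t$ is monic in $x$ of degree $t^2$, and $\nu(x([tr]P))<0$ (as $E_1(K)$ is a subgroup containing $[r]P$). This filtration-based branch selection, suggested in \cite[Remark 3.6]{Verzobio}, is the repair the paper invokes in \Cref{error}; it is absent from your outline, and the ``periodicity'' heuristic, pushed literally, gives the wrong branch.

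Second, you concede that the min-plus recurrence only yields the result once one shows the two competing terms are never tied along the relevant index families, and that extracting the coefficient $2\nu(\psi_k/\psi_{r-k})+\mu$ is ``where the genuine work lies.'' That non-degeneracy and seed propagation \emph{is} the theorem: in \cite{Cheon}, and in the paper's CM generalisation (\Cref{good}, \Cref{alphar} and \Cref{NotAnn}, which work with the N\'{e}ron local height rather than tropicalising the recurrence), it is carried out by explicit identities and case analysis, not deferred. Note also that the paper does not reprove \Cref{asymptotic}; it quotes \cite[Theorem 4]{Cheon} and records in \Cref{error} how the gap is closed. As written, your proposal establishes the reduction in its first paragraph and nothing beyond it.
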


\begin{remark}\label{error}
Although the final result (Theorem \ref{asymptotic}) is correct, the proof by Cheon and Hahn has a gap. The source of inaccuracy originates from Lemma 1 of the paper, which claims that if a non-torsion point $P \in E_{0}(K)$ satisfies $\nu(x)<0$, then $\nu\left(x(mP)\right)=\nu\textcolor{red}{\left(x(P)-2\nu(m)\right)}$. This result is only true under additional hypotheses, as stated in \cite[Remark 6.5]{Stange2}. This slip affects Theorem 3, and hence the proof of Theorem 3 in \cite{Cheon}. However, as suggested in \cite[Remark 3.6]{Verzobio}, this proof can be fixed easily using the subgroup filtration in (\ref{filtration}) below, which is why Theorem \ref{asymptotic} is still true.  
\end{remark}

The notations for our main result are as follows: let $K$ be a number field, $E/K$ be an elliptic curve defined by a Weierstrass equation with coefficients in $\OO_{K}$ and complex multiplication by an order $\End(E)=\Z[\omega]$ in a quadratic imaginary field $F \subseteq K$, so $[z]P \in E(K)$ for $z \in \End(E)$. Let $P=(x, y)$ be a non-torsion point of $E(K)$ and $z$ be an element in the endomorphism ring $\End(E)$ of the curve $E$, so we can write
\begin{equation}\label{expression}
x([z]P)=\frac{\phi_{z}(P)}{\psi_{z}^{2}(P)} \text{ and } x(\textcolor{red}{[z]}P)\OO_{K}=A_{z}(P)D_{z}(P)^{-2},
\end{equation}
where $\psi_{z}$ and $\phi_{z}$ denote the corresponding net polynomials (see \ref{nett}) and $A_{z}(P), D_{z}(P)$ are coprime $\OO_{K}$--ideals. If $K$ has class number $1$, then we can also write 

\[
x([z]P)=\frac{\phi_{z}(P)}{\psi_{z}^{2}(P)}=\frac{A_{z}(E, P)}{B_{z}(E, P)^2} 
\]
with $A_z$, $B_z$ coprime elements in $K$. In this case, we have $D_{z}(P)=(B_z)$, where $B_{z}$ is a generator to be picked. To give our main result, we make two definitions regarding valuations. 

\begin{defn}\label{Valuations}
Let $\p$ be a prime ideal in $\OO_{K}$, lying above a rational prime $p$. For $\alpha \in K^{\times}$, write
\[
\alpha\OO_{K}=\p^{a_\p}\prod_{i=1}^{n}\mathfrak{q}_{i}^{a_i}
\]
as the unique factorisation of $\alpha \OO_K$, where $\mathfrak{q}_i$ are prime ideals for $1\leq i\leq n$. We define the valuation $\nu$ associated to $\p$ to be the function
\begin{align*}
\nu \colon K^{\times} &\rightarrow \Z \\
\alpha &\mapsto a_{\p}
\end{align*}
so $\nu$ is a normalised discrete valuation. For an ideal $\mathfrak{a}$ in $\OO_K$ with unique factorisation
\[
\mathfrak{a}=\p^{a_\p}\prod_{i=1}^{n}\mathfrak{q}_{i}^{a_i},
\]
we further define 
\[
d_{\mathfrak{p}}(\mathfrak{a}) \coloneqq a_\p.
\]
\end{defn}

\begin{defn}\label{gzP}
Let $E/K$ be an elliptic curve defined by a Weierstrass equation with coefficients in $\OO_{K}$ and which has complex multiplication by an order $\End(E)=\Z[\omega]$ in a quadratic imaginary field $F\subset K$. Let $\nu$ be a normalised discrete valuation with respect to a prime ideal $\p$ of $\OO_{K}$ (so $\nu(\p)=1$) and $z \in \Z[\omega]$. For $P \in E(K)$, if $[z]P\neq O$, then we define 
\[
g_{z, \nu}(P)\coloneqq\Min(2\nu\left(\psi_{z}(P)\right), \nu\left(\phi_{z}(P)\right))
\]
to be the common valuation of the net polynomials. Moreover, if $[z]P=O$, we set $g_{z, \nu}(P)=0$. 
\end{defn}

\begin{theorem}\label{CM gcd}
With the notations above, $g_{z, \nu}(P)$ is asymptotically equal to a quadratic function. More precisely,
\begin{enumerate}
\item If $P$ is non--singular modulo $\nu$, then
\[
\begin{aligned}
g_{z, \nu}(P) 
&=-2\nu\left(F_{\bm{z}}(\bm{P})\right) \\
&= ( z_1 z_2-z_1^2)\max(0, -\nu(x(P))) 
   + (z_1 z_2-z_2^2)\max(0, -\nu(x([\omega]P))) \\
&\quad - (z_1 z_2)\max(0, -\nu(x([1+\omega]P))),
\end{aligned}
\]
where $F_{\bm{z}}(\bm{P})$ is as defined in (\ref{F}).

\item If $P$ is singular modulo $\nu$ (so $\nu(x(P)) \geq 0$), 
let $r$ be an element of the annihilator of $P$ in the 
$\Z[\omega]$--module $E(K)/E_{0}(K)$, 
$\alpha = a + b\omega \in \Z[\omega]$. Then
\begin{equation}
g_{z, \nu}(P) = 
\begin{cases}
(a^{2} - ab)\mu + (b^{2} - ab)\mu_{\omega} + (ab)\mu_{1+\omega}, 
& \text{if $z = \alpha r \in \Ann(P)$,} \\[0.5em]
2 \nu\!\left(\psi_{\beta}\right) 
\pm 2a\!\textcolor{red}{\left(\nu\!\left(\dfrac{\psi_{\beta}}{\psi_{r - \beta}}\right) 
   + \dfrac{\mu}{2}\right)} 
\pm 2b\!\textcolor{red}{\left(\nu\!\left(\dfrac{\psi_{\beta}}{\psi_{r \omega - \beta}}\right) 
   + \dfrac{\mu_{\omega}}{2}\right)} \\[0.5em]
\quad + (a^{2} - ab)\mu + (b^{2} - ab)\mu_{\omega} + (ab)\mu_{1+\omega}, 
& \text{if $z = \alpha r \pm \beta \not\in \Ann(P)$.}
\end{cases}
\end{equation}
where 
\[
\mu(r) = g_{r, \nu}(P), \quad 
\mu_{\omega}(r) = g_{r\omega, \nu}(P), \quad 
\mu_{1+\omega}(r) = g_{r(1+\omega), \nu}(P).
\]
\end{enumerate}

If $\End(E)=\Z[\omega]$ is a principal ideal domain and $r$ generates $\Ann(P)$, then one of the two cases above always holds. 
\end{theorem}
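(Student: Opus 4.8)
The plan is to treat the two valuations $2\nu(\psi_z(P))$ and $\nu(\phi_z(P))$ simultaneously through the single quantity $g_{z,\nu}(P)$ and to exploit the fact that, as $z$ ranges over $\Z[\omega]\cong\Z^2$ via $z=z_1+z_2\omega$, this quantity is governed by a \emph{quadratic form}. Throughout I identify $z$ with the pair $\bm z=(z_1,z_2)$ and regard $\psi_z(P)=\Psi_{\bm z}(\bm P)$ as the value of the rank-$2$ elliptic net attached to $\bm P=(P,[\omega]P)$. The basic reduction is the elementary identity $\nu(\phi_z)=2\nu(\psi_z)+\nu(x([z]P))$, which gives at once
\[
g_{z,\nu}(P)=2\nu(\psi_z(P))+\min\bigl(0,\nu(x([z]P))\bigr),
\]
and which I would match with $-2\nu(F_{\bm z}(\bm P))$ using the definition of $F_{\bm z}$ in \eqref{F}. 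Under this identification $g_{z,\nu}(P)$ is (up to the factor $-2$) the order along $\p$ of a section of a symmetric line bundle on a power of $E$, and such orders are quadratic in $\bm z$; the substance of Part (1) is to make this concrete.

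\emph{Part (1).} Here the task is to show that, under non-singular reduction, $z\mapsto g_{z,\nu}(P)$ is a homogeneous quadratic form on $\Z^2$, and then to pin down its three coefficients. I would establish quadraticity by verifying the parallelogram law $g_{u+v}+g_{u-v}=2g_u+2g_v$ directly from the net recurrence (the higher-rank analogue of \eqref{recurrence}) together with the Weierstrass-type relation $x([u]P)-x([v]P)=-\Psi_{u+v}\Psi_{u-v}\Psi_u^{-2}\Psi_v^{-2}$: because $P$ is non-singular modulo $\nu$, the curve-data factors entering the recurrence are $\p$-adic units, and the remaining denominator contributions combine with the $\min(0,\nu(x([z]P)))$ terms so that $g$ itself satisfies the parallelogram identity. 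Since $g_{0,\nu}(P)=0$, any such function is a quadratic form, hence determined by its values at $(1,0),(0,1),(1,1)$. These are read off from $\psi_1=1$, $\phi_1=x$ and the analogous normalisations, giving $g_{(1,0)}=\min(0,\nu(x(P)))=-\max(0,-\nu(x(P)))$, and likewise $g_{(0,1)}=-\max(0,-\nu(x([\omega]P)))$ and $g_{(1,1)}=-\max(0,-\nu(x([1+\omega]P)))$. Substituting these into the parallelogram expansion
\[
g_{(z_1,z_2)}=z_1^2\,g_{(1,0)}+z_2^2\,g_{(0,1)}+z_1z_2\bigl(g_{(1,1)}-g_{(1,0)}-g_{(0,1)}\bigr)
\]
yields exactly the displayed formula.

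\emph{Part (2).} Under singular reduction I would work with the subgroup filtration \eqref{filtration} and use that $E_0(K)$ is stable under $\End(E)$, so that $E(K)/E_0(K)$ is a $\Z[\omega]$-module and $\Ann(P)$ is an ideal. If $z=\alpha r\in\Ann(P)$ with $\alpha=a+b\omega$, then $[r]P$, $[r\omega]P$ and $[r(1+\omega)]P$ all lie in $E_0(K)$, i.e.\ have non-singular reduction, so the Part (1) machinery applies verbatim to $\alpha$ acting on these three base points; this produces the pure quadratic form in $(a,b)$ whose values at $(1,0),(0,1),(1,1)$ are $\mu,\mu_\omega,\mu_{1+\omega}$, which is precisely $(a^2-ab)\mu+(b^2-ab)\mu_\omega+(ab)\mu_{1+\omega}$. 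For $z=\alpha r\pm\beta\notin\Ann(P)$ I would follow the strategy of \Cref{asymptotic}, but carried out in two directions at once: using the net recurrence to compute the increment of $g$ when $a$ (resp.\ $b$) is increased by one, one finds that each such increment is the step of an arithmetic progression governed by $\mu$ and $\nu(\psi_\beta/\psi_{r-\beta})$ (resp.\ by $\mu_\omega$ and $\nu(\psi_\beta/\psi_{r\omega-\beta})$). Summing the two resulting linear contributions and adding the quadratic part already computed gives the second displayed formula, the signs $\pm$ recording whether $\beta$ is approached from $\alpha r+\beta$ or $\alpha r-\beta$.

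Finally, when $\Z[\omega]$ is a principal ideal domain and $r$ generates $\Ann(P)=(r)$, I would invoke division in $\Z[\omega]$: every $z$ may be written $z=\alpha r+\rho$ with $\rho$ in a fixed set of coset representatives of $\Z[\omega]/(r)$. If $\rho=0$ then $z\in\Ann(P)$ and the first case applies; otherwise, after fixing the sign convention $\rho=\pm\beta$, the second case applies, so the two cases are exhaustive. The step I expect to be the main obstacle is Part (2) in the non-annihilator case: the two-dimensional recurrence must be unwound so that the cross terms in $a$ and $b$ reconcile with the quadratic part and the $\pm\beta$ bookkeeping remains consistent, which is exactly where the single-variable argument of Cheon and Hahn does not transcribe mechanically; a close second is the concrete verification of the parallelogram law for $g$ in Part (1), where the denominator corrections must be shown to cancel under non-singular reduction.
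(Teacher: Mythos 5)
Your Part (1) is sound in outline and, once unwound, is the paper's own argument in different clothing: substituting $g_{z,\nu}(P)=2\nu(\psi_z(P))+\min\bigl(0,\nu(x([z]P))\bigr)$ and the relation $\nu(\psi_{u+v})+\nu(\psi_{u-v})=2\nu(\psi_u)+2\nu(\psi_v)+\nu\bigl(x([u]P)-x([v]P)\bigr)$, the parallelogram law you want for $g$ is exactly the quasi-parallelogram law for the good-reduction local height $\tfrac{1}{2}\max\bigl(0,-\nu(x(\cdot))\bigr)$; it does not follow from ``curve-data factors being $\p$-adic units'', and the paper obtains it through the N\'eron local height (\Cref{functional eq}, and \Cref{strong}, which is precisely your quadratic-form-determined-by-values-at-$\bm{e}_i$ and $\bm{e}_i+\bm{e}_j$ step, executed via the auxiliary forms $\varepsilon$ and $\widehat\varepsilon$). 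That is fixable by citation. In the annihilator case, however, your ``applies verbatim'' conceals a rescaling that must be accounted for: the net based at $(rP,\,r\omega P)$ is normalised so that its cancellation exponent at $(1,0)$ is $\min\bigl(0,\nu(x(rP))\bigr)$, which differs from $\mu=g_{r,\nu}(P)$ by $2\nu(\psi_r(P))$. The missing bridge is Stange's change-of-basis formula (\ref{EDS formula}), $\psi_{\alpha r}(P)=\psi_{\alpha}(rP)\,\psi_r(P)^{a^2-ab}\psi_{r\omega}(P)^{b^2-ab}\psi_{r(1+\omega)}(P)^{ab}$, whose exponents supply exactly that discrepancy; even with it, the paper's \Cref{good} still needs a case analysis according to which of $[r]P$, $[r\omega]P$, $[r(1+\omega)]P$, $[\alpha r]P$ reduce to $O$ modulo $\p$, which your sketch skips but which does go through.

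The genuine gap is the second case of Part (2). Your claim that the increments of $g$ in $a$ and in $b$ are ``steps of an arithmetic progression governed by $\mu$ and $\nu(\psi_\beta/\psi_{r-\beta})$'' is the assertion to be proved, not a proof device, and you supply no mechanism for it. Unwinding the two-variable recurrence (equivalently, expanding $\nu(\psi_{\alpha r\pm\beta})$ via \Cref{functional eq}) generates local-height terms $\widetilde{\lambda}([\gamma]P)$ at singular multiples $\gamma\notin\Ann(P)$, and the telescoping you need forces $\widetilde{\lambda}([\gamma]P)$ to depend only on $\gamma$ modulo $\Ann(P)$: in the paper's \Cref{NotAnn} the term $\widetilde{\lambda}([\beta]P)$ must cancel against $\widetilde{\lambda}([r-\beta]P)$ and $\widetilde{\lambda}([r\omega-\beta]P)$. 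That periodicity is \Cref{finite values}, and it is not formal: its proof uses potential good reduction (CM forces an integral $j$-invariant), base change to an extension $L$ where $E$ has good reduction, the explicit reduction computation of \Cref{reduction lemma} (including the Deuring normal form analysis in residue characteristic $2$), and Streng's $\Z[\omega]$-module filtration (\ref{w filtration}) to compare filtration levels of $[\alpha]P$ and $[r]P$. In Cheon--Hahn the analogous input comes from degree and leading-coefficient properties of division polynomials, which are unavailable here because net polynomials are rational functions --- the very reason the paper abandons a mechanical transcription of that argument. You correctly flag this step as the main obstacle, but as written the case $z=\alpha r\pm\beta$ remains unproved.
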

\begin{remark}
In the case of non-singular reduction, this is a result proved by \cite[Proposition 1.7]{Akbary}.
\end{remark}
\begin{remark}\label{r dependency}
We would like to emphasise that in the case of singular reduction, the valuation formula in general does depend on which element $r$ of the annihilator ideal is chosen. As we vary $r$, the values of $\mu(r), \mu_{\omega}(r)$ and $\mu_{1+\omega}(r)$ vary as well. 
\end{remark}

In \Cref{Examples}, we provide two examples to illustrate this theorem. For \Cref{Example 1}, we consider the curve
\(
E \colon y^{2} = x^{3} - 2x
\)
over the field \( \mathbb{Q}(i) \), which has complex multiplication by \( \mathbb{Z}[i] \).
We choose \(\bm{P} = (P, iP)\), where \( P = (-1, 1) \) and \( \textcolor{red}{[i]} P = (1, i) \).
Both points have good reduction everywhere, so the common valuations of the net polynomials are correctly predicted by \Cref{CM gcd}(1). On the other hand, in \Cref{Example 2} we consider the elliptic curve
\(
E \colon y^{2} = x^{3} + x^{2} - 3x + 1
\)
over the field \( \mathbb{Q}(\sqrt{-2}) \), which has complex multiplication by
\( \mathbb{Z}[\sqrt{-2}] = \mathbb{Z}[\omega] \).
We choose \(\bm{P} = (P, [\sqrt{-2}]P)\), where \( P = (-1, 2) \) and
\(
[\sqrt{-2}]P = \left( -\frac{1}{\omega^{2}}, \frac{1}{\omega^{3}} \right).
\)
The points \( P \) and \( [1+\sqrt{-2}]P \) have singular reduction modulo \( \sqrt{-2} \), while \( [\sqrt{-2}]P \) does not.
Therefore, we must use \Cref{CM gcd}(2) to correctly determine the common valuations of the net polynomials at the prime \( \sqrt{-2} \). \textcolor{red}{If one tries to use \Cref{CM gcd}(1) instead, the resulting value will contradict the examples listed in \Cref{Table 2}.}

One reason we choose to generalise Cheon and Hahn’s recursive formula in \cite[Theorem 4]{Cheon} rather than the closed formula by Stange in \cite[Theorem 6.1, 7.1]{Stange2} is that, in addition to the motivations outlined earlier, it has an interesting application. In \cite{Verzobio}, Verzobio used Cheon’s exponential cancellation formula to demonstrate that all elliptic divisibility sequences satisfy a recurrence relation in the rational case, subject to conditions on the indices. In this paper, we extend this result to further highlight the utility of \Cref{CM gcd}.

\begin{theorem}\label{CM recurrence}
Let $E$ be an elliptic curve defined by a Weierstrass equation with integer coefficients in $K$, a number field with class number $1$, and which has complex multiplication by $\End(E)=Z[\omega]$.  Let $P \in E(K)$ be a non-torsion point. Consider the sequence of denominator ideals $\{D_{[\alpha]P}\}_{\alpha \in \Z[\omega]}$. Let $\alpha, \beta$ and $\gamma$ be elements of $\Z[\omega]$\textcolor{red}{. If} two of them are \textcolor{red}{elements of $\mathfrak{M}(P)$, an ideal} to be defined in \Cref{M(P)}, then we can choose a sequence of generators $D_{[\alpha]P}=(B_{\alpha})$ such that
\[
B_{\alpha+\beta}B_{\alpha-\beta}B_{\gamma}^2=B_{\alpha+\gamma}B_{\alpha-\gamma}B_{\beta}^2-B_{\beta+\gamma}B_{\beta-\gamma}B_{\alpha}^2.
\] 
\end{theorem}

This follows from the more general result below.

\begin{theorem}
\label{general CM recurrence}
Let $E$ be an elliptic curve defined by a Weierstrass equation with integer coefficients in $K$ and which has complex multiplication by $\End(E)=\Z[\omega]$. Let $P \in E(K)$ be a non-torsion point. Consider the sequence of denominator ideals $\{D_{\alpha}\}_{\alpha \in \Z[\omega]}$, and the sequence $\{\psi_{\alpha}\}=\{\psi_{\alpha}(P)\}_{\alpha \in \Z[\omega]}$ whose terms are elements of $K$. Let $\alpha, \beta$ and $\gamma$ be elements of $\Z[\omega]$.

Suppose that at least two of $\alpha, \beta$ and $\gamma$ belong to the ideal $\mathfrak{M}(P)$ defined in \Cref{M(P)}; and suppose further that either
\begin{enumerate}[(a)]
\item $\Z[\omega]$ is a principal ideal domain, or
\item there exists an element $M \in \mathfrak{M}(P)$ such that \textcolor{red}{for} each of $\alpha, \beta$ and $\gamma$, \textcolor{red}{it} is either a multiple of $M$, or coprime to $M$.
\end{enumerate}
Then there is a sequence $\{\mathfrak{g}_{\alpha}\}_{\alpha \in \Z[\omega]}$ whose terms are \textcolor{red}{fractional} ideals of $\OO_K$ with 
\begin{align*}
D_{\alpha}&=\psi_{\alpha}\mathfrak{g}_{\alpha}^{-1}, \\
\psi_{\alpha+\beta}\psi_{\alpha-\beta}\psi_{\gamma}^2 &= \psi_{\alpha+\gamma}\psi_{\alpha-\gamma}\psi_{\beta}^2 - \psi_{\beta+\gamma}\psi_{\beta-\gamma}\psi_{\alpha}^2, \\
\mathfrak{g}_{\alpha+\beta}\mathfrak{g}_{\alpha-\beta}\mathfrak{g}_{\gamma}^2 &= \mathfrak{g}_{\alpha+\gamma}\mathfrak{g}_{\alpha-\gamma}\mathfrak{g}_{\beta}^2 = \mathfrak{g}_{\beta+\gamma}\mathfrak{g}_{\beta-\gamma}\mathfrak{g}_{\alpha}^2.
\end{align*}
\end{theorem}

In the case of \textcolor{red}{$\OO_K$} being a principal ideal domain, the sequence of generators $\{B_{\alpha}\}_{\alpha \in \Z[\omega]}$ is not in general an EDS in the traditional sense, since it need not satisfy the recurrence relation (\ref{recurrence}). \textcolor{red}{More precisely, \Cref{general CM recurrence} shows that $\{B_{\alpha}\}_{\alpha \in \Z[\omega]}$ does not satisfy the recurrence relation (\ref{recurrence})} if $P$ admits primes of bad reduction. In \Cref{Examples}, \Cref{Example 1} considers the curve \(
E \colon y^{2} = x^{3} - 2x
\)
over the field \( \mathbb{Q}(i) \) with complex multiplication by $\Z[i]$ and the point $P=(1, -1)$. The point $P$ has good reduction at all primes, so \Cref{CM recurrence} tells us that \textcolor{red}{$\mathfrak{M}(P)=\OO_K$} or equivalently, the recurrence relation of the sequence of generators $\{B_{\alpha}\}_{\alpha \in \Z[i]}$ holds for all $\alpha \in \Z[i]$. On the other hand, for the curve \(
E \colon y^{2} = x^{3} + x^{2} - 3x + 1
\)
over the field \( \mathbb{Q}(\sqrt{-2}) \), which has complex multiplication by
\( \mathbb{Z}[\sqrt{-2}] = \mathbb{Z}[\omega] \), and the point \( P = (-1, 2) \) in \Cref{Example 2}, $P$ has bad reduction at the prime $\sqrt{-2}$. According to \Cref{general CM recurrence}, the ideal $\mathfrak{M}(P)$ is $\left(\sqrt{-2}\right)$ and we require two of $\alpha, \beta$ and $\gamma$ in $\Z[\sqrt{-2}]$ to be in $\mathfrak{M}(P)$ for the recurrence relation
\[
B_{\alpha+\beta}B_{\alpha-\beta}B_{\gamma}^2=B_{\alpha+\gamma}B_{\alpha-\gamma}B_{\beta}^2-B_{\beta+\gamma}B_{\beta-\gamma}B_{\alpha}^2
\] 
to hold. 

The merit of this weakened version of the recurrence relation (\ref{recurrence}) is that it allows one to work with the genuine denominator, which has better $p$--adic properties and can be applied to counting problems on elliptic curves. For example, the authors of \cite{Brauer} have used Verzobio's result \cite[Theorem~1.9]{Verzobio} to show that for a rank $1$ elliptic curve $E/\Q$ given by an integral Weierstrass equation, there is a nontrivial upper bound for the number of points whose $y$-coordinate denominator is a sum of two squares (see \cite[Theorem~1.5]{Brauer}). We believe that by using the more general result in \Cref{general CM recurrence}, one may be able to extend this to rank $2$ elliptic curves with complex multiplication, which the author is currently investigating.

\subsection{Main methodology and outline of the paper}
The original proof of \Cref{asymptotic} by Cheon and Hahn \cite{Cheon} relies heavily on the properties of the division polynomials \( \psi_n \), such as their degrees and recursive structure. Since elliptic nets are defined by rational functions rather than polynomials, their strategy does not carry over directly. Instead, our approach is inspired by Naskręcki and Verzobio \cite{Verzobio2} and Panda \cite{Panda}. In particular, we use the Néron local height function
\[
  \lambda \colon E(K)\setminus\{O\} \to \mathbb{R}
\]
to compute the local height of the point \([\alpha]P\) for \(\alpha \in \End(E)=\mathbb{Z}[\omega]\). Once we obtain an explicit formula for this (see \Cref{functional eq}), we can reconstruct the intermediate steps of \cite{Cheon} in the CM setting and thereby establish the formula in \Cref{CM gcd}. Compared with the original proof of Cheon and Hahn, our use of the Néron local height function not only removes the dependence on division polynomials, thus making the argument more flexible and applicable to other problems involving elliptic curves or elliptic nets, but also avoids heavy machinery, resulting in a cleaner and more accessible proof.

However, working with the Néron local height function also introduces certain technical difficulties. Although the expression
\[
  \lambda([a]P + [b]Q)
\]
is expected to be a quadratic function in \(a\) and \(b\), based on the known formula for \(\lambda([m]P)\), one must still determine the coefficients of \(a^2\), \(b^2\), and \(ab\) explicitly. In particular, the coefficient of the \(ab\) term is not obvious. Furthermore, \Cref{CM gcd} extends the result of Cheon and Hahn, so the main challenge lies in identifying the additional terms needed to accommodate the CM setting. As illustrated in \Cref{Bad multiple}, repeated application of the local height formula in \Cref{functional eq} produces numerous local height values which are difficult to trace where they come from; these ultimately arise from the new terms associated with CM points.

With regards to this, we outline the paper as follows: in Section 2, we review necessary facts about elliptic nets and the N\'{e}ron local height function. Section 3 then provides the formula\textcolor{red}{e} for the local height value of a linear combination of points, showing how it is related to elliptic nets. In Section 4, we will prove \Cref{CM gcd} by repeated use of the formula in Section 3. Finally, we will also follow Verzobio's strategy to show \Cref{CM recurrence} \textcolor{red}{and \Cref{general CM recurrence}} as an application in Section 5. 

\subsection{Notation}\label{Notations}
Throughout the paper, unless otherwise specified, we will use the following notations.
\begin{table}[h!]
\centering
\begin{tabular}{cc}
$K$ and $L$           & Arbitrary number fields       \\
$F$           & A quadratic imaginary field             \\
$\OO_{K}$     & The ring of integers of the field $K$   \\
$\Z[\omega]$   & An order of $F$\\
$\nu$	&               A normalised discrete valuation\\
$u$ and $w$ & Non--Archimedean valuations          
\end{tabular}
\end{table}

Therefore, our convention for this paper is as follows: let $E/K$ be an elliptic curve defined over a number field $K$ with complex multiplication by $\Z[\omega]$ (i.e. $\End(E)=\Z[\omega]$), and let its (not necessarily minimal) Weierstrass equation be given by the following:
\begin{equation}\label{EC}
E\colon y^2 + a_1xy + a_3y = x^3 + a_2x^2 + a_4x + a_6; a_i \in \OO_{K}.
\end{equation}

\section{Preliminaries}
In this section, we will collect some basic facts about the main ingredients of our proofs, consisting of orders of quadratic fields, elliptic nets and the local N\'{e}ron height function, so the paper is self-contained.  

\subsection{Orders in quadratic fields}

An order $\OO$ in a quadratic field $K$ is a subset $\OO \subset K$ such that 
$\OO$ is a subring of $K$ containing $1$ and a free $\Z$--module of rank $2$. 
For our purpose, we would like to describe the structure of $\OO$. 
First note that for a quadratic field $K$ of discriminant $d_{K}$, 
the ring of integers $\OO_{K}$ can be written as 
\[
\OO_{K} = \Z[w_{K}], \quad w_{K} = \frac{d_{K} + \sqrt{d_{K}}}{2}.
\]
The following result is taken from \cite[\S7A, page 120, Lemma~7.2]{Cox}.

\begin{lemma}\label{conductor}
Let $\OO$ be an order in a quadratic field $K$ of discriminant $d_{K}$. 
Then $\OO$ has finite index in $\OO_{K}$, and if we set the conductor of $\OO$ to be $f = \left[\OO_{K} : \OO\right]$, then 
\[
\OO = \Z + f\OO_{K} = \Z[fw_{K}].
\]
\end{lemma}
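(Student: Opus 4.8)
The plan is to establish the three claims of the lemma in order: that $\OO \subseteq \OO_{K}$, that the index $f = [\OO_{K} : \OO]$ is finite, and finally the explicit description $\OO = \Z + f\OO_{K} = \Z[fw_{K}]$. First I would verify the inclusion $\OO \subseteq \OO_{K}$ by an integrality argument. For any $\alpha \in \OO$, the subring $\Z[\alpha]$ is contained in $\OO$ and is therefore itself finitely generated as a $\Z$-module (a submodule of a finitely generated module over the PID $\Z$); this is exactly the standard criterion for $\alpha$ to be integral over $\Z$, so $\alpha \in \OO_{K}$. Since $\OO$ and $\OO_{K}$ are both free $\Z$-modules of rank $2$ with $\OO \subseteq \OO_{K}$, the elementary divisor theorem (Smith normal form) provides a $\Z$-basis of $\OO_{K}$ adapted to $\OO$, from which the quotient $\OO_{K}/\OO$ is seen to be finite and hence $f = [\OO_{K} : \OO]$ is a well-defined positive integer.

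For the explicit description I would prove the two inclusions separately and then match indices. On the one hand, since $\OO$ is a ring containing $1$ we have $\Z \subseteq \OO$, and by Lagrange's theorem every element of $\OO_{K}$ has order dividing $f$ in the group $\OO_{K}/\OO$, so $f\OO_{K} \subseteq \OO$; together these give $\Z + f\OO_{K} \subseteq \OO$. On the other hand, writing $\OO_{K} = \Z \oplus \Z w_{K}$, a direct computation shows $\Z + f\OO_{K} = \Z \oplus \Z(f w_{K}) = \Z[f w_{K}]$, and comparing the bases $\{1, w_{K}\}$ and $\{1, f w_{K}\}$ via the diagonal change-of-basis matrix of determinant $f$ shows that $\Z + f\OO_{K}$ has index exactly $f$ in $\OO_{K}$. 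Thus $\OO$ and $\Z + f\OO_{K}$ both have index $f$ in $\OO_{K}$, with one contained in the other, forcing $\OO = \Z + f\OO_{K} = \Z[f w_{K}]$.

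The genuinely delicate point — and the step I expect to require the most care — is the index-matching that upgrades the inclusion $\Z + f\OO_{K} \subseteq \OO$ to an equality; everything else (the integrality criterion, the finiteness of the index, and the explicit basis computation for $\Z + f\OO_{K}$) is routine. As an alternative to the Lagrange argument, one could instead project $\OO_{K} = \Z \oplus \Z w_{K}$ onto its second coordinate via $\pi : a + b w_{K} \mapsto b$; then $\pi(\OO) = m\Z$ for some integer $m \geq 1$, and choosing $\xi = s + m w_{K} \in \OO$ with $\pi(\xi) = m$, one checks for arbitrary $\alpha = a + b w_{K} \in \OO$ that $\alpha - (b/m)\xi$ lies in $\Z \cap \OO = \Z$, whence $\OO = \Z \oplus \Z\xi = \Z \oplus \Z(m w_{K})$; the same basis computation then identifies $m$ with $f$. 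Either route reduces the lemma to elementary lattice arithmetic, and I would favour the Lagrange formulation for brevity.
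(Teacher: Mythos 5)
Your proof is correct, and it is essentially the standard argument: the paper does not prove this lemma itself but cites \cite[Lemma 7.2]{Cox}, whose proof proceeds exactly as yours does — finiteness of the index from the rank-$2$ free $\Z$-module structure, the Lagrange-type observation $f\OO_{K} \subseteq \OO$, the basis computation $\Z + f\OO_{K} = \Z \oplus \Z(fw_{K})$ of index $f$ in $\OO_{K}$, and the index comparison forcing equality. Nothing further is needed.
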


\begin{cor}\label{order structure}
Every order $\OO$ in a quadratic field $K$ of conductor $f$ is of the form $\Z[\omega]$, 
where either 
\[
\omega^2 = D, \quad \text{or} \quad \omega^2 - f\omega = D
\]
for some integer $D$.
\end{cor}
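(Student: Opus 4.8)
The plan is to prove Corollary~\ref{order structure} as a direct consequence of Lemma~\ref{conductor}, by explicitly computing the minimal polynomial of the generator $fw_K$ and splitting into the two residue cases of $d_K$ modulo $4$. By Lemma~\ref{conductor}, every order $\OO$ of conductor $f$ in the quadratic field $K$ of discriminant $d_K$ can be written as $\OO = \Z[fw_K]$ with $w_K = (d_K + \sqrt{d_K})/2$. Thus it suffices to set $\omega = fw_K$ (possibly after a harmless $\Z$--translation, which does not change $\Z[\omega]$) and verify that $\omega$ satisfies one of the two stated monic integral quadratic relations.

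First I would recall the standard fact that a quadratic field discriminant $d_K$ satisfies $d_K \equiv 0$ or $1 \pmod 4$, and treat these two cases separately. When $d_K \equiv 0 \pmod 4$, write $d_K = 4m$; then $w_K = \tfrac{d_K + \sqrt{d_K}}{2} = 2m + \sqrt{m}$, so $\Z[w_K] = \Z[\sqrt{m}]$ and consequently $\Z[fw_K] = \Z[f\sqrt{m}]$. Setting $\omega = f\sqrt{m}$ gives $\omega^2 = f^2 m = D$ with $D = f^2 m \in \Z$, which is the first alternative $\omega^2 = D$. When $d_K \equiv 1 \pmod 4$, I would work directly with $\omega = f w_K$: since $w_K$ is a root of $T^2 - d_K T + \tfrac{d_K^2 - d_K}{4}$ (the minimal polynomial of $w_K$, whose constant term is an integer precisely because $d_K \equiv 1 \pmod 4$), a short substitution shows that $\omega = f w_K$ satisfies a monic integral quadratic of the shape $\omega^2 - f d_K \,\omega + D' = 0$. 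After absorbing the integer multiple of $f$ in the linear coefficient by replacing $\omega$ with a suitable $\Z$--translate $\omega - c$ (again preserving $\Z[\omega]$), I would bring the linear term into the normalised form $\omega^2 - f\omega = D$ for an appropriate integer $D$, matching the second alternative.

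The only genuinely delicate point is the bookkeeping in the $d_K \equiv 1 \pmod 4$ case: one must check that the translation used to reduce the linear coefficient to exactly $f$ (rather than a general integer multiple of $f$) is legitimate, i.e. that replacing the generator by an integer translate leaves the ring $\Z[\omega]$ unchanged and keeps the constant term an integer. This is the main obstacle, though it is a routine verification rather than a deep one. I would phrase it carefully by noting that for any $c \in \Z$, $\Z[\omega] = \Z[\omega - c]$, and that completing the linear term amounts to choosing $c$ so that the coefficient of the new generator is $f$; the resulting constant term $D$ is then automatically an integer since it is an integer combination of $d_K$, $f$, and $c$. Finally I would remark that the two cases are mutually exclusive and exhaustive according to the parity of $d_K \bmod 4$, which completes the proof.
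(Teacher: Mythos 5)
Your proposal is correct and takes essentially the same approach as the paper: both invoke \Cref{conductor} to reduce to the generator of $\OO$, split into the two congruence classes modulo $4$, and compute the minimal polynomial of that generator explicitly. The only difference is cosmetic --- you parametrise by the discriminant $d_K$ and make explicit the integer translation $\Z[\omega]=\Z[\omega-c]$ needed to normalise the linear coefficient, a step the paper performs implicitly when it passes from $fw_K$ to $f\sqrt{N}$ (respectively $f\left(\frac{1+\sqrt{N}}{2}\right)$).
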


\begin{proof}
Let $K = \Q(\sqrt{N})$ with $N$ a square--free integer. 
Recall that one can describe the ring of integers as 
\[
\OO_K =
\begin{cases}
  \Z[\sqrt{N}], & \text{if } N \not\equiv 1 \bmod 4, \\[1em]
  \Z\!\left[\dfrac{1 + \sqrt{N}}{2}\right], & \text{if } N \equiv 1 \bmod 4.
\end{cases}
\]
Let $\{1, \omega\}$ be a $\Z$--basis of $\OO$. 
If $N \not\equiv 1 \pmod{4}$, then \Cref{conductor} implies that 
$\omega = f\sqrt{N}$, which has minimal polynomial 
$m_1(x) = x^2 - f^2 N$. 
If $N \equiv 1 \pmod{4}$, then \Cref{conductor} implies that 
$\omega = \left(\dfrac{1 + \sqrt{N}}{2}\right)f$, which has minimal polynomial 
\[
m_2(x) = x^2 - f x + \dfrac{f^2(1 - N)}{4}.
\]
\qedhere
\end{proof}

\subsection{Elliptic divisibility sequence and elliptic net}
An \emph{elliptic divisibility sequence} (EDS) is a sequence $(W_n)$ that takes values in an integral domain and satisfies the recursion 
\[
W_{n+m}W_{n-m}W_{r}^2=W_{n+r}W_{n-r}W_{m}^2-W_{m+r}W_{m-r}W_{n}^2
\]
for any integers $n, m, r$. Therefore, one can see that $\{\psi_{n}(P)\}_{n \in \N}$ is also an EDS. In \cite{Stange}, Stange generalised the concept of an elliptic sequence to an $n$--dimension array, also known as an \emph{elliptic net}, which extends the recurrence relation above to `a higher rank'. 

\begin{defn}[Elliptic net]\label{elliptic net}
Let $A$ be a free finitely-generated abelian group and $R$ be an integral domain. An \emph{elliptic net} is any map $W \colon A \rightarrow R$ with $W(\bm{0})=0$ and, for any $\bm{p}, \bm{q}, \bm{r}, \bm{s} \in A$,
\begin{multline*}
W(\bm{p}+\bm{q}+\bm{s})W(\bm{p}-\bm{q})W(\bm{r}+\bm{s})W(\bm{r}) \\
+ W(\bm{q}+\bm{r}+\bm{s})W(\bm{q}-\bm{r})W(\bm{p}+\bm{s})W(\bm{p}) \\ 
+ W(\bm{r}+\bm{p}+\bm{s})W(\bm{r}-\bm{p})W(\bm{q}+\bm{s})W(\bm{q})=0.
\end{multline*} \label{elliptic net def}
We call the rank of \(W\) the rank of the elliptic net. 
\end{defn}

This is indeed a generalisation of EDS: note that for an elliptic net $W: A \rightarrow R$, if we take $A=\Z$, $\bm{p}=m, \bm{q}=n, \bm{r}=r, \bm{s}=0$, then $W$ is an EDS by definition since $W$ is an \emph{odd} function.

\begin{lemma}\cite[Proposition 2.1]{Stange}
Let $A$ be a free abelian group of finite rank, and $R$ be an integral domain. Let $W\colon A \rightarrow R$ be an elliptic net. Then for any $\bm{v} \in A, W(-\bm{v})=-W(\bm{v})$. In other words, an elliptic net is an odd function. 
\end{lemma}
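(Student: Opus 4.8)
The plan is to read off the oddness directly from the four-term recurrence defining an elliptic net in \Cref{elliptic net}, by choosing a convenient specialisation of the free parameters and then exploiting that $R$ is an integral domain. The only input beyond the recurrence itself is the normalisation $W(\bm{0}) = 0$, which is what lets two of the three summands collapse.

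Concretely, I would first substitute $\bm{s} = \bm{0}$ and $\bm{r} = \bm{0}$ into the defining relation. Because $W(\bm{0}) = 0$, the first summand $W(\bm{p}+\bm{q})W(\bm{p}-\bm{q})W(\bm{0})^2$ vanishes, while the remaining two simplify (using $\bm{q} \pm \bm{0} = \bm{q}$ and $\bm{0} \pm \bm{p} = \pm\bm{p}$) to
\[
W(\bm{q})^2 W(\bm{p})^2 + W(\bm{p})\,W(-\bm{p})\,W(\bm{q})^2 = 0,
\]
that is,
\[
W(\bm{q})^2\, W(\bm{p})\bigl[W(\bm{p}) + W(-\bm{p})\bigr] = 0 \qquad \text{for all } \bm{p}, \bm{q} \in A.
\]
If $W$ is identically zero the claim is trivial, so I may assume there is some $\bm{q}_0$ with $W(\bm{q}_0) \neq 0$; fixing $\bm{q} = \bm{q}_0$ and cancelling the nonzero factor $W(\bm{q}_0)^2$ in the integral domain $R$ yields $W(\bm{p})\bigl[W(\bm{p}) + W(-\bm{p})\bigr] = 0$ for every $\bm{p}$.

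To finish, I would split on whether $W(\bm{v})$ vanishes. If $W(\bm{v}) \neq 0$, cancelling $W(\bm{v})$ gives $W(\bm{v}) + W(-\bm{v}) = 0$ immediately. The degenerate case $W(\bm{v}) = 0$ is the only genuine subtlety: here I would apply the same identity with $\bm{p} = -\bm{v}$, obtaining $W(-\bm{v})\bigl[W(-\bm{v}) + W(\bm{v})\bigr] = 0$; since $W(\bm{v}) = 0$ this reduces to $W(-\bm{v})^2 = 0$, and integrality of $R$ forces $W(-\bm{v}) = 0 = -W(\bm{v})$. Thus the relation $W(-\bm{v}) = -W(\bm{v})$ holds in all cases. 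The main obstacle is not any deep computation but rather remembering to treat this vanishing case separately, since the naive cancellation argument only delivers oddness at the points where $W$ is nonzero.
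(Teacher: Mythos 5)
Your proof is correct and follows essentially the same route as the paper: both specialise the defining recurrence of \Cref{elliptic net} with $\bm{r}=\bm{s}=\bm{0}$, use integral-domain cancellation, and handle the points where $W$ vanishes by invoking the resulting identity at $-\bm{v}$ as well. The only cosmetic difference is that you keep $\bm{q}$ free and cancel a fixed nonzero factor $W(\bm{q}_0)^2$, whereas the paper further sets $\bm{p}=\bm{q}=\bm{v}$ to obtain $W(\bm{v})^3\bigl(W(\bm{v})+W(-\bm{v})\bigr)=0$ and concludes by a short contradiction argument using the same identity with $\bm{v}$ replaced by $-\bm{v}$.
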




Just like division polynomials, we can relate elliptic nets to elliptic curves. In \cite{Stange}, Stange has shown that there is a collection of rational functions on elliptic curves over arbitrary fields that satisfy the definition of elliptic nets. In fact, they describe the collection of denominators of linear combinations of points, $\sum_{i=1}^{n}[m_{i}]P_{i}$, on an elliptic curve, which means we are taking $A=\Z^{n}$ in the definition of an elliptic net.

\begin{defn}[Net polynomial and elliptic denominator net]
For an arbitrary field $K$, consider the affine coordinate ring 
\[
R_{n}=K[x_i, y_i]_{1\leq i \leq n}[(x_{i}-x_{j})^{-1}]/\langle f(x_{i}, y_{i})\rangle_{1\leq i \leq n},
\]
where $f$ is the defining polynomial (\ref{EC}) for the elliptic curve $E/K$. Let $\bm{P}=(P_{1}, ..., P_{n}) \in E(K)^{n}$ and $\bm{v}=(v_{1}, \dotsc, v_{n}) \in \Z^{n}$. If $P_{i}\neq O$ for all $i$ and $P_i \neq P_j$ for any $i \neq j$, then there exists rational functions $\Psi_{\bm{v}}(\bm{P})$, $\Phi_{\bm{v}}(\bm{P})$, $\bar\Omega_{\bm{v}}(\bm{P}) \in R_{n}$ such that 
\begin{equation}\label{notation}
\bm{v} \cdot \bm{P} = v_{1}P_{1} +...+v_{n}P_{n}=\left(\frac{\Phi_{\bm{v}}(\bm{P})}{\Psi^{2}_{\bm{v}}(\bm{P})}, \frac{\bar\Omega_{\bm{v}}(\bm{P})}{\Psi^{3}_{\bm{v}}(\bm{P})}\right)=\left(\frac{A_{\bm{v} \cdot \bm{P}}}{D^{2}_{\bm{v} \cdot \bm{P}}}, \frac{B_{\bm{v} \cdot \bm{P}}}{D^{3}_{\bm{v} \cdot \bm{P}}}\right) .
\end{equation}

\begin{enumerate}
\item The polynomial $\Psi_{\bm{v}}$ is defined to be the \emph{$\bm{v}$-th net polynomial}, which is an elliptic net (for proof, see \cite[Theorem 4.1]{Stange}).
\item The element $D_{\bm{v} \cdot \bm{P}}$ is called the \emph{elliptic denominator net}. 
\end{enumerate}
\end{defn}

The following properties of net polynomials will be useful to take note of in this paper. We refer the readers to \cite[Lemma 2.5, Lemma 2.6, Theorem 2.8]{Stange} and \cite[Lemma 2.5]{Akbary} for the proof. 
\begin{prop}[Properties of net polynomials]\label{net poly}
The net polynomials satisfy the following properties:
\begin{enumerate}[(a)]
\item $\Psi_{\bm{v}}$ is an elliptic net, so the recurrence in \Cref{elliptic net} holds;
\item For all $\bm{v} \in \mathbb{Z}^{n}$, the values of the $\bm{v}$-th net polynomial are completely defined by the following initial conditions:
\begin{itemize}
\item $\Psi_{\bm{0}} = 0$;
\item $\Psi_{\bm{e}_i} = 1$ for $1\leq i \leq r$;
\item $\Psi_{2\bm{e}_i} = 2y_i + a_{1}x_i + a_{3} = \psi_{2}(P_i)$;
\item $\Psi_{\bm{e}_i + \bm{e}_j} = 1$, $i \neq j$;
\item $\Psi_{2\bm{e}_i + \bm{e}_j} = 2x_i + x_j - \left(\frac{y_j - y_i}{x_j - x_i}\right)^{2}
      - a_{1}\left(\frac{y_j - y_i}{x_j - x_i}\right) + a_{2}$, $i \neq j$.
\end{itemize}
\label{a values}
\item For $1\leq i \leq r$, we have $\Psi_{n\bm{e}_{i}}(\bm{P})=\psi_{n}(P_i)$.
\item For any $\bm{v}, \bm{u} \in \mathbb{Z}^{r}$,
\begin{equation}
\Psi_{\bm{v}}^{2}\Psi_{\bm{u}}^{2}\left(x\left(\bm{v} \cdot \bm{P}\right)
- x\left(\bm{u} \cdot \bm{P}\right)\right)
= -\Psi_{\bm{v} + \bm{u}}\Psi_{\bm{v} - \bm{u}},
\end{equation}
\label{notation}
where $x(\bm{v} \cdot \bm{P})$ represents the $x$-coordinate of the point $\bm{v} \cdot \bm{P}$.

\item Using the notations in \ref{notation}, for $1 \leq i \leq r$, substitute $\bm{u} = \bm{e}_i$ in \ref{notation}; then we have
\begin{equation}
\Phi_{\bm{v}}(\bm{P})
= \Psi_{\bm{v}}^{2}(\bm{P})\,x(P_i)
- \Psi_{\bm{v} + \bm{e}_i}(\bm{P})\Psi_{\bm{v} - \bm{e}_i}(\bm{P}).
\end{equation}
\label{net poly b}
\end{enumerate}
\end{prop}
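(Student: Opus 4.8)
The plan is to prove the five assertions in the order listed, reducing everything to a pair of classical Weierstrass $\sigma$-function identities over $\C$ and then transferring them to an arbitrary field $K$ by a universality (specialisation) argument, exactly as in \cite{Stange}. I expect (a) to be the crux, so I treat it first. Over $\C$, fix a lattice $\Lambda$ with associated $\wp$ and $\sigma$ functions and write each $P_i=\wp(z_i)$. The net polynomial $\Psi_{\bm{v}}$ is, up to a normalising factor that depends only on the $z_i$ (a product of powers of $\sigma(z_i)$ together with an exponential whose exponent is quadratic in $\bm{v}$) and crucially \emph{not} on $\bm{v}$ in a way that obstructs rationality, equal to $\sigma(v_1 z_1+\cdots+v_n z_n;\Lambda)$. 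The quasi-periodicity factors of $\sigma$ cancel precisely because $\sum v_i z_i$ is paired against the lattice, so that $\Psi_{\bm{v}}$ descends to a genuine rational function on $E^n$ as in the definition. The elliptic net recurrence of \Cref{elliptic net} is then the image of a single three-term $\sigma$-function relation (a Frobenius--Stickelberger / degenerate Riemann identity): expanding its three four-fold products and cancelling the common normalising factors yields exactly the stated vanishing sum. To pass from $\C$ to arbitrary $K$, I would note that both sides of the recurrence are universal polynomials in the Weierstrass coefficients $a_i$ and the affine coordinates of the $P_i$; an identity of such polynomials valid over $\C$ (equivalently at the generic point over $\Z$) holds over every field.

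Next, (b): granting the recurrence from (a), I would argue by induction on a suitable order on $\Z^n$ (say $\|\bm{v}\|_\infty$, refined lexicographically) that every value $\Psi_{\bm{v}}$ is forced by the seed values listed. For each new $\bm{v}$ one chooses $\bm{p},\bm{q},\bm{r},\bm{s}$ in the recurrence so that exactly one term involves $\bm{v}$ while every other term is already determined, and so that the factor multiplying $\Psi_{\bm{v}}$ is a product of previously computed \emph{nonzero} seed values (for instance $\Psi_{\bm{e}_i}=1$); dividing isolates $\Psi_{\bm{v}}$ uniquely. The seed values themselves, such as $\Psi_{2\bm{e}_i}$, $\Psi_{\bm{e}_i+\bm{e}_j}$ and $\Psi_{2\bm{e}_i+\bm{e}_j}$, come directly from the definition of the net polynomials by computing the small cases. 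Assertion (c) is then immediate: both $\{\Psi_{n\bm{e}_i}\}_n$ and $\{\psi_n(P_i)\}_n$ satisfy the rank-one specialisation of the recurrence and share the initial data $0,1,\psi_2,\psi_3,\psi_4$ at $P_i$, so the uniqueness just proved forces them to coincide.

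For (d) I would again work over $\C$ and invoke the classical identity
\[
\wp(z)-\wp(w) = -\,\frac{\sigma(z+w)\,\sigma(z-w)}{\sigma(z)^2\,\sigma(w)^2},
\]
specialised to $z=\sum v_i z_i$ and $w=\sum u_i z_i$. Translating each $\sigma$ back into its net polynomial via the normalisation from (a) — the exponential factors cancel because $\pm$-combinations enter symmetrically — produces $\Psi_{\bm{v}}^2\Psi_{\bm{u}}^2\bigl(x(\bm{v}\cdot\bm{P})-x(\bm{u}\cdot\bm{P})\bigr)=-\Psi_{\bm{v}+\bm{u}}\Psi_{\bm{v}-\bm{u}}$, and the same universality argument extends it to all $K$. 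Assertion (e) is then a one-line corollary: setting $\bm{u}=\bm{e}_i$ uses $\Psi_{\bm{e}_i}=1$ and $x(\bm{e}_i\cdot\bm{P})=x(P_i)$, while $\Psi_{\bm{v}}^2\,x(\bm{v}\cdot\bm{P})=\Phi_{\bm{v}}$ by the defining ratio $x(\bm{v}\cdot\bm{P})=\Phi_{\bm{v}}/\Psi_{\bm{v}}^2$, so rearranging gives $\Phi_{\bm{v}}=\Psi_{\bm{v}}^2\,x(P_i)-\Psi_{\bm{v}+\bm{e}_i}\Psi_{\bm{v}-\bm{e}_i}$. The genuine difficulty is concentrated entirely in (a): pinning down the exact normalising factor relating $\Psi_{\bm{v}}$ to $\sigma$ so that the three-term $\sigma$-identity reproduces the elliptic net relation on the nose, and verifying that this factor is arranged so $\Psi_{\bm{v}}$ is a legitimate rational function on $E^n$; everything else is bookkeeping or a short application of the uniqueness in (b).
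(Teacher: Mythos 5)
Your proposal is correct and follows essentially the same route as the proof the paper relies on: the paper does not prove this proposition itself but defers to \cite[Lemmas 2.5--2.6, Theorem 2.8]{Stange} and \cite[Lemma 2.5]{Akbary}, where $\Psi_{\bm{v}}$ is exactly $\sigma(v_1z_1+\cdots+v_nz_n;\Lambda)$ divided by a multiplicative quadratic form in $\bm{v}$, the net recurrence comes from the generalised three-term $\sigma$-identity, (b)--(c) are the same induction, (d) is the classical $\wp(z)-\wp(w)=-\sigma(z+w)\sigma(z-w)/\sigma(z)^2\sigma(w)^2$ identity, and the transfer to arbitrary $K$ is the same universality/specialisation argument. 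One wording slip worth fixing: the normalising factor \emph{does} depend on $\bm{v}$ (through quadratic exponents), and it is precisely because it is a quadratic form --- hence satisfies the parallelogram law --- that the recurrence in (a) survives the scaling and the factors cancel in (d), which is the content of the scale-equivalence result the paper records as \cite[Proposition 6.1]{Stange}.
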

When we say net polynomials are defined completely by the initial conditions given in \Cref{net poly}\ref{a values}, we mean by choosing appropriate initial values and substituting them into the definition of an elliptic net, one can obtain some other values, and hence define all values recursively. One can refer to \cite[Theorem 2.5]{Stange} for examples. For our purpose, now we will use the following notations: for $\alpha = a+b\omega \in \End(E)=\Z[\omega]$, define
\begin{equation}\label{nett}
\psi_{\alpha}(P)\coloneqq \Psi_{\bm{\alpha}}\left(\bm{P}\right), \,\, \text{and}\,\, \phi_{\alpha}(P) \coloneqq \Phi_{\bm{\alpha}}(\bm{P})
\end{equation}
where $\bm{\alpha}=(a, b) \in \Z^2$ represents the coordinate form of $\alpha$. Now as in the case for EDS, for $\alpha=\alpha_{1}+\alpha_{2}\omega, \beta=\beta_{1}+\beta_{2}\omega, \gamma=\gamma_{1}+\gamma_{2}\omega$, if we take $\bm{p}=(\beta_{1}, \beta_{2}), \bm{q}=(\alpha_{1}, \alpha_{2}), \bm{r}=(\gamma_{1}, \gamma_{2}), \bm{s}=(0, 0)$ in (\ref{elliptic net def}), then we get 
\[
\psi_{\alpha+\beta}\psi_{\alpha-\beta}\psi_{\gamma}^{2}=\psi_{\alpha+\gamma}\psi_{\alpha-\gamma}\psi_{\beta}^{2}-\psi_{\beta+\gamma}\psi_{\beta-\gamma}\psi_{\alpha}^{2},
\]
which justifies the use of elliptic nets to be our CM division polynomials. 

The final fact about elliptic nets we need for this paper is the equivalence relation for elliptic nets, namely \emph{scale equivalence}. This is done by relating them to certain `quadratic forms'. 

\begin{defn}
Let $B$ and $C$ be abelian groups written additively. A function $F \colon B \rightarrow C$ is a quadratic form if for all $x, y, z \in B$, 
\begin{equation}\label{hi}
F(x+y+z)+F(x)+F(y)+F(z)=F(x+y)+F(x+z)+F(y+z).
\end{equation}
If $F \colon B \rightarrow C$ is a quadratic form, then it satisfies the parallelogram law, that is, for all $x, y \in B$,
\begin{equation}\label{bye}
F(x+y)+F(x-y)=2F(x)+2F(y).
\end{equation}
Furthermore, the converse holds if $C$ is 2-torsion free. 
\end{defn}

\begin{prop}
Let $K$ be a field and let $W\colon A \rightarrow K$ be an elliptic net. Let $F\colon A \rightarrow K^{\times}$ be a quadratic form. Then the function $W^{F} \colon A \rightarrow K$ defined by
\[
W^{F}(\bm{v})=W(\bm{v})F(\bm{v})
\]
is also an elliptic net. 
\end{prop}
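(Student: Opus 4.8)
The plan is to substitute $W^{F}$ directly into the defining recurrence of \Cref{elliptic net def} and show that it collapses to a nonzero scalar multiple of the recurrence for $W$. The normalisation is immediate: $W^{F}(\bm{0}) = W(\bm{0})F(\bm{0}) = 0$ since $W(\bm{0}) = 0$. For the recurrence itself, each of the three summands factors as the corresponding $W$-term times an $F$-term; for instance the first summand becomes $[W(\bm{p}+\bm{q}+\bm{s})W(\bm{p}-\bm{q})W(\bm{r}+\bm{s})W(\bm{r})]\cdot T_{1}$, where $T_{1} = F(\bm{p}+\bm{q}+\bm{s})F(\bm{p}-\bm{q})F(\bm{r}+\bm{s})F(\bm{r})$, and analogously the two cyclically permuted summands carry $F$-factors $T_{2}$ and $T_{3}$. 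The whole proof reduces to showing $T_{1} = T_{2} = T_{3} = C$ for a single $C \in K^{\times}$: then the recurrence for $W^{F}$ equals $C$ times the recurrence for $W$, hence $C\cdot 0 = 0$.

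To produce this common factor I would first rewrite the quadratic-form axiom \eqref{hi} multiplicatively, since the target group here is $K^{\times}$: it reads $F(x+y+z)F(x)F(y)F(z) = F(x+y)F(x+z)F(y+z)$. Setting all arguments to $\bm{0}$ gives $F(\bm{0}) = 1$, and the parallelogram law \eqref{bye} with $y = \bm{0}$ gives $F(-\bm{v}) = F(\bm{v})$, so $F$ is even. I would then introduce the associated symmetric form $\Lambda(\bm{u},\bm{w}) \coloneqq F(\bm{u}+\bm{w})F(\bm{u})^{-1}F(\bm{w})^{-1}$ and verify, directly from the multiplicative axiom, that it is bi-multiplicative, i.e.\ $\Lambda(\bm{u}+\bm{u}',\bm{w}) = \Lambda(\bm{u},\bm{w})\Lambda(\bm{u}',\bm{w})$; this polarisation uses only \eqref{hi} and so needs no $2$-torsion-free hypothesis on $K^{\times}$. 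Bilinearity then yields the expansion rules $F(\bm{u}+\bm{w}) = F(\bm{u})F(\bm{w})\Lambda(\bm{u},\bm{w})$ and, using evenness, $F(\bm{u}-\bm{w}) = F(\bm{u})F(\bm{w})\Lambda(\bm{u},\bm{w})^{-1}$.

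Expanding $T_{1}$ with these rules, the sum $\bm{p}+\bm{q}+\bm{s}$ contributes a factor $\Lambda(\bm{p},\bm{q})$ while the difference $\bm{p}-\bm{q}$ contributes $\Lambda(\bm{p},\bm{q})^{-1}$; these cancel, and after collecting the squared single-vector factors one obtains $T_{1} = F(\bm{p})^{2}F(\bm{q})^{2}F(\bm{r})^{2}F(\bm{s})^{2}\,\Lambda(\bm{p},\bm{s})\Lambda(\bm{q},\bm{s})\Lambda(\bm{r},\bm{s})$. Performing the identical expansion on $T_{2}$ and $T_{3}$ gives exactly the same expression, because in each summand the cross-term $\Lambda$ between the two \emph{paired} vectors cancels against the one coming from their difference, while the surviving symmetric part $\Lambda(\bm{p},\bm{s})\Lambda(\bm{q},\bm{s})\Lambda(\bm{r},\bm{s})$ is common to all three. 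Hence $T_{1} = T_{2} = T_{3} \eqqcolon C \in K^{\times}$, and the argument concludes as in the first paragraph. The heart of the proof, and the only place demanding real care, is this last bookkeeping step: confirming that within each of the three summands precisely the antisymmetric cross-term between the paired arguments cancels, and that what remains is genuinely invariant under the cyclic relabelling. A secondary technical point is to prove bilinearity of $\Lambda$ cleanly from the multiplicative axiom rather than from the parallelogram-law converse, which would spuriously require $K^{\times}$ to be $2$-torsion free.
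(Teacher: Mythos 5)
Your proposal is correct and takes essentially the same approach as the paper, whose own proof simply defers the computation to Stange's Proposition 6.1: factor each summand of the recurrence into its $W$-part and $F$-part, then use the multiplicative form of the quadratic-form identities (via the polarised form $\Lambda$) to show the three $F$-factors coincide, which your expansion $T_{1}=T_{2}=T_{3}=F(\bm{p})^{2}F(\bm{q})^{2}F(\bm{r})^{2}F(\bm{s})^{2}\Lambda(\bm{p},\bm{s})\Lambda(\bm{q},\bm{s})\Lambda(\bm{r},\bm{s})$ verifies correctly. One trivial slip: evenness $F(-\bm{v})=F(\bm{v})$ follows from setting $x=\bm{0}$ (not $y=\bm{0}$) in the parallelogram law, since $y=\bm{0}$ only yields $F(\bm{0})^{2}=1$.
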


\begin{proof}
We just need to show that $F(\bm{v})$ also satisfies the recurrence relation of elliptic nets. To do so, \textcolor{red}{first write (\ref{hi}) and (\ref{bye}) in multiplicative form and multiply them together. We then utilise the symmetry in quadratic forms. We omit the computation here; interested readers may} refer to \cite[Proposition 6.1]{Stange}.
\end{proof}

\begin{defn}
We say two elliptic nets $W_{1}$ and $W_{2}$ are scale equivalent if there exists a quadratic form $F\colon A \rightarrow K$ such that $W_{1}=W_{2}^{F}$. 
\end{defn}

\subsection{Local N\'{e}ron height function}
Let $\lambda : E(K)\backslash\{O\} \to \mathbb{R}$ be the local N\'{e}ron height function as defined in \cite[Chapter VI, page 455, Theorem 1.1]{Silverman2}. We recall some classical facts on the local N\'{e}ron height, which we took from \cite[Chapter VI, page 476-7, Exercise 6.3, 6.4(e)]{Silverman2}.
\begin{prop}
Let \( K \) be a field complete with respect to a non-Archimedean valuation \( u \)\textcolor{red}{, and let} $E/K$ be an elliptic curve\textcolor{red}{. Then} the following hold:
\begin{enumerate}
\item For all $P, Q \in E(K)$ with $P, Q, P\pm Q \neq O$, \textcolor{red}{the N\'{e}ron height function $\lambda\colon E(K)\setminus \{O\} \rightarrow \R$ associated to the non-Archimedean valuation $u$} satisfies the quasi-parallelogram law
\[
\lambda(P+Q)+\lambda(P-Q)=2\lambda(P)+2\lambda(Q)+u(x(P)-x(Q))-\frac{1}{6}u(\Delta).
\]
\item The local N\'{e}ron height function does not change if we replace $K$ with a finite extension. Moreoever, $\lambda(\cdot)$ does not depend on the choice of Weierstrass equation for $E/K$.
\end{enumerate}
\end{prop}

\begin{cor}\label{functional}
For any non $m$-torsion point $P$, with $m \in \Z$,
\[
\lambda([m]P)=m^{2}\lambda(P)+u(\psi_{m}(P))-\frac{1}{12}(m^{2}-1)u(\Delta).
\]
\end{cor}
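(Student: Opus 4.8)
The statement to prove is $\lambda([m]P) = m^2\lambda(P) + u(\psi_m(P)) - \frac{1}{12}(m^2-1)u(\Delta)$. Let me plan how to establish this.

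The plan is to prove this by induction on $m$ using the quasi-parallelogram law as the engine, together with the recursive structure of the division polynomials $\psi_m$. First I would verify the base cases: for $m=1$ the identity reads $\lambda(P) = \lambda(P) + u(\psi_1(P)) - 0$, which holds since $\psi_1 = 1$ and $u(1)=0$. For $m=2$, the claim reduces to property (iii) of the local N\'eron height function in \Cref{local height def}, namely $\lambda([2]P) = 4\lambda(P) + u((2y+a_1x+a_3)(P)) - \frac14 u(\Delta)$; one checks this matches the formula since $\psi_2(P) = 2y+a_1x+a_3$ and $\frac{1}{12}(4-1) = \frac14$.

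For the inductive step, I would apply the quasi-parallelogram law to the pair of points $P_1 = [m]P$ and $P_2 = P$ (or, to handle both parities cleanly, to $[m]P$ and $[n]P$ and then specialise). Writing
\[
\lambda([m+1]P) + \lambda([m-1]P) = 2\lambda([m]P) + 2\lambda(P) + u\bigl(x([m]P) - x(P)\bigr) - \tfrac16 u(\Delta),
\]
I would substitute the inductive hypotheses for $\lambda([m]P)$, $\lambda([m-1]P)$, and $\lambda(P)$, leaving $\lambda([m+1]P)$ as the unknown to solve for. The key algebraic input is property \ref{notation} of \Cref{net poly}, which in the one-dimensional specialisation gives the identity $\psi_m^2(P)\bigl(x([m]P)-x(P)\bigr) = -\psi_{m+1}(P)\psi_{m-1}(P)$ (taking $\bm v = m\bm e_i$ and $\bm u = \bm e_i$). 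Taking $u(\cdot)$ of this identity converts the awkward term $u(x([m]P)-x(P))$ into a combination of $u(\psi_{m+1})$, $u(\psi_{m-1})$, and $u(\psi_m)$, which is exactly the shape needed to make the $\psi$-terms telescope into $u(\psi_{m+1}(P))$.

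The remaining work is to collect the coefficients of $\lambda(P)$ and of $u(\Delta)$ and verify they match $(m+1)^2$ and $-\frac{1}{12}((m+1)^2-1)$ respectively; the $\lambda(P)$ coefficient bookkeeping is $2m^2 + 2 - (m-1)^2 = (m+1)^2$, and the discriminant coefficient is a routine combination of the $-\frac16$ from the parallelogram law with the inductive $-\frac1{12}$ terms. The main obstacle I anticipate is bookkeeping rather than conceptual: one must ensure the division-polynomial identity from \Cref{net poly}\ref{notation} is applied with the correct sign (the minus sign on $\psi_{m+1}\psi_{m-1}$ is harmless under $u$ since $u$ is insensitive to units, but one should confirm no point lies at $O$ so the law applies), and one must run the induction downward as well as upward to cover all $m \in \Z$, using that both sides are even in $m$ (since $\psi_{-m} = -\psi_m$ forces $u(\psi_{-m}) = u(\psi_m)$ and $\lambda$, $m^2$ are manifestly even). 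A clean way to package this is to prove the two-variable identity for $\lambda([m]P)+\lambda([n]P)$ simultaneously, but the single-variable induction above suffices.
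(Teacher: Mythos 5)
Your proof is correct and the computations check out: the base cases ($\psi_1=1$ and property (iii) of \Cref{local height def} matching $\psi_2 = 2y+a_1x+a_3$), the conversion of $u\bigl(x([m]P)-x(P)\bigr)$ into $u(\psi_{m+1})+u(\psi_{m-1})-2u(\psi_m)$ via the identity $\Psi_{\bm v}^2\Psi_{\bm u}^2\bigl(x(\bm v\cdot\bm P)-x(\bm u\cdot\bm P)\bigr)=-\Psi_{\bm v+\bm u}\Psi_{\bm v-\bm u}$ of \Cref{net poly}, the coefficient bookkeeping $2m^2+2-(m-1)^2=(m+1)^2$ and $-\tfrac{1}{12}\bigl[2(m^2-1)+2-((m-1)^2-1)\bigr]=-\tfrac{1}{12}\bigl[(m+1)^2-1\bigr]$, and the evenness reduction for negative $m$ are all sound. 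The paper itself does not prove this corollary: it is quoted, together with the quasi-parallelogram law, from \cite[Chapter VI, Exercises 6.3, 6.4(e)]{Silverman2}. However, your inductive strategy is precisely the method the paper later uses to prove its rank-two generalisation, \Cref{functional eq}: there too the quasi-parallelogram law is applied to a pair of the form $(P+[k]Q,\,Q)$, the resulting $x$-coordinate difference is rewritten as a ratio of net polynomials, and induction assembles the quadratic coefficients. So your argument is a faithful one-variable specialisation of the paper's own technique and would serve as a self-contained proof of the cited result; indeed the paper uses the corollary as the base case ($a=0$ or $b=0$) of that very induction.

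One caveat concerns the literal hypothesis of the statement. The corollary is asserted for any non-$m$-torsion point $P$, which includes torsion points whose exact order $N$ does not divide $m$. If $N<m$, your induction must pass through intermediate multiples $[k]P$ with $N\mid k$, where $[k]P=O$, so neither the quasi-parallelogram law (which requires $P,Q,P\pm Q\neq O$) nor the inductive hypothesis is available. You flag the need to "confirm no point lies at $O$" but do not resolve it; for non-torsion $P$ there is nothing to check, and that is the only case the paper ever uses, but the full statement would need an additional argument, e.g.\ $u$-adic approximation of a torsion point by non-torsion points combined with continuity of $\lambda$ and of $P\mapsto u(\psi_m(P))$ away from zeros and poles.
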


The above results demonstrate how the valuations of $\psi_m$ are connected to local N\'{e}ron heights, allowing us to avoid addressing any properties of division polynomials. To make it look neater without the valuation of the discriminant, we can normalise the local height funciton as in \cite[\emph{Second normalisation}, page 90]{Serre}. 

\begin{defn}
The normalised local N\'{e}ron height function is defined to be 
\[
\widetilde{\lambda}(P)\coloneqq \lambda(P)-\frac{1}{12}u(\Delta).
\]
\end{defn}

Therefore, the `normalised' quasi-parallelogram law and corollary above are respectively,
\begin{equation}\label{tt}
\widetilde{\lambda}(P+Q)+\widetilde{\lambda}(P-Q)=2\widetilde{\lambda}(P)+2\widetilde{\lambda}(Q)+u(x(P)-x(Q))
\end{equation}
and 
\[
\widetilde{\lambda}([m]P)=m^{2}\widetilde\lambda(P)+u(\psi_{m}(P)).
\]

One of the reasons why we choose this normalisation is because when a point $P$ is not singular $\bmod u$, then the N\'{e}ron local height function has a nice formula, which we take from \cite[Chapter VI, page 470, Theorem 4.1]{Silverman2}.

\begin{theorem}\label{formula}
Let \( K \) be a field complete with respect to a non-Archimedean valuation \( u \), let \( E/K \) be an elliptic curve, and choose a Weierstrass equation for \( E \) with \( u \)-integral coefficients,
\[
E\colon y^2 + a_1 xy + a_3 y = x^3 + a_2 x^2 + a_4 x + a_6.
\]

Let \( \Delta \) be the discriminant of this equation. \textcolor{red}{If $P$ has good reduction modulo $u$,} then the Néron local height function \( \lambda : E(K) \setminus \{ O \} \to \mathbb{R} \) is given by the formula
\begin{equation}\label{good reduction local height}
\lambda(P) = \frac{1}{2} \max \left(u(x(P)^{-1}), 0 \right) + \frac{1}{12} u(\Delta),
\end{equation}
and the normalised version is 
\[
\widetilde{\lambda}(P) = \frac{1}{2} \max (u(x(P)^{-1}), 0 )
\]
\end{theorem}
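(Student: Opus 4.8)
The plan is to invoke the uniqueness half of \Cref{local height def}: the N\'eron local height is the \emph{unique} function satisfying properties (i)--(iii) there, so it suffices to check that the candidate
\[
f(P) \coloneqq \tfrac{1}{2}\max\!\left(u(x(P)^{-1}),\,0\right) + \tfrac{1}{12}u(\Delta)
\]
enjoys all three. Once this is done the normalised statement is immediate, since $\widetilde{\lambda}(P) = \lambda(P) - \tfrac{1}{12}u(\Delta)$ removes precisely the constant term. Throughout I would write $u(x(P)^{-1}) = -u(x(P))$, so that $\max(u(x(P)^{-1}),0)$ is nonzero exactly when $u(x(P)) < 0$, that is, exactly when $P$ lies in the kernel of reduction.

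Properties (i) and (ii) are routine. Continuity and boundedness off a neighbourhood of $O$ follow because $x$ is $u$-adically continuous and $\max(\,\cdot\,,0)$ preserves continuity, while on the complement of a $u$-adic neighbourhood of $O$ the quantity $u(x(P))$ is bounded below. For (ii), as $P \to O$ we have $u(x(P)) \to -\infty$, so near $O$ the formula reads $f(P) = -\tfrac12 u(x(P)) + \tfrac1{12}u(\Delta)$, whence $f(P) + \tfrac12 u(x(P)) = \tfrac1{12}u(\Delta)$ is eventually constant and the limit exists.

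The substance of the proof is property (iii). After cancelling the discriminant contributions (one checks $\tfrac{4}{12} - \tfrac14 = \tfrac1{12}$), verifying the duplication law for $f$ reduces to the single identity
\[
\max\!\left(-u(x([2]P)),0\right) = 4\max\!\left(-u(x(P)),0\right) + 2\,u(\psi_2(P)),
\]
where $\psi_2 = 2y + a_1x + a_3$. I would attack this through the explicit duplication formula $x([2]P) = \phi_2(P)/\psi_2(P)^2$, in which $\phi_2$ and $\psi_2^2$ are polynomials in $x(P)$ with $u$-integral coefficients, $\phi_2$ being monic of degree $4$ and $\psi_2^2$ of degree $3$. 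There are two regimes. If $u(x(P)) < 0$, then the leading term $x^4$ strictly dominates the lower-degree terms of $\phi_2$, so $u(\phi_2(P)) = 4u(x(P))$; since $[2]P$ again reduces to $O$, substituting into $u(x([2]P)) = u(\phi_2(P)) - 2u(\psi_2(P))$ yields the identity after a short computation in which $u(\psi_2(P))$ cancels. If $u(x(P)) \ge 0$, then $u(\phi_2(P)), u(\psi_2(P)) \ge 0$ and the right-hand side collapses to $2u(\psi_2(P))$, so I must show $\max(-u(x([2]P)),0) = 2u(\psi_2(P))$; when $u(\psi_2(P)) = 0$ this says $[2]P$ has affine reduction, while when $u(\psi_2(P)) > 0$ (so the reduction $\widetilde P$ is $2$-torsion) it amounts to $u(\phi_2(P)) = 0$.

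The main obstacle is exactly this last subcase. To see that $u(\phi_2(P)) = 0$ whenever $\widetilde P \in \widetilde E[2]$, I would use that the reduction $\widetilde E$ is smooth (good reduction, $u(\Delta)=0$): on a smooth curve $x\circ\widetilde{[2]} = \widetilde\phi_2 / \widetilde{\psi_2^{\,2}}$ is in lowest terms, so $\widetilde\phi_2$ and $\widetilde{\psi_2^{\,2}}$ have no common root. As $\widetilde x(P)$ is a root of $\widetilde{\psi_2^{\,2}}$ it cannot be a root of $\widetilde\phi_2$, forcing $u(\phi_2(P)) = 0$ as required. The only other point needing care is the passage from an arbitrary $u$-integral model to one realising good reduction (where $u(\Delta)=0$): here one checks that $f$ is unchanged under an admissible change of coordinates, or equivalently proves (iii) on the minimal model and transports it using the equation-independence recorded in \Cref{local height def}. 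Pleasantly, no formal-group analysis or separate treatment of residue characteristic $2$ is needed, since every step above is a valuation computation with the explicit polynomials $\phi_2$ and $\psi_2^2$.
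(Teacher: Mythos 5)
The first thing to note is that the paper does not prove \Cref{formula} at all: it is quoted verbatim from \cite[Ch.~VI, Theorem~4.1]{Silverman2}, so your proposal can only be measured against that source and against the way the paper actually uses the result. Your plan---verify axioms (i)--(iii) of \Cref{local height def} for the candidate $f$ and invoke uniqueness---is exactly the standard route taken for the good-reduction case, and your execution of the key step is sound in that setting: the reduction of (iii) to
\[
\max\bigl(-u(x([2]P)),0\bigr)=4\max\bigl(-u(x(P)),0\bigr)+2u(\psi_2(P)),
\]
the three regimes, and the observation that $\widetilde{\phi}_2$ and $\widetilde{\psi}_2^{\,2}$ have no common root because the reduced curve is \emph{smooth}, all work uniformly in the residue characteristic.

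The gap is one of scope, and it matters for this paper. Your verification of the crucial subcase ($u(x(P))\ge 0$, $u(\psi_2(P))>0$) explicitly uses smoothness of the reduction, i.e.\ $u(\Delta)=0$; and the uniqueness statement in \Cref{local height def} pins down $\lambda$ only if $f$ satisfies (i)--(iii) on \emph{all} of $E(K)\setminus\{O\}$, which is false when the chosen equation has singular reduction (the formula fails off $E_0(K)$, e.g.\ for multiplicative reduction). So your argument proves the theorem only for an equation with $u(\Delta)=0$. But \Cref{formula} as stated---and as used in the proofs of \Cref{ann}, \Cref{strong} and \Cref{good}, where \eqref{good reduction local height} is applied at primes of \emph{bad} reduction to points of $E_0(K)$---concerns an arbitrary $u$-integral equation with only the point $P$ assumed nonsingular modulo $u$. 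Your escape route, ``pass to a model realising good reduction,'' does not exist in that generality: for multiplicative reduction ($j$ non-integral) no Weierstrass equation over $K$, nor over any extension, has good reduction, and for additive potentially good reduction one needs a field extension together with invariance property (3) of \Cref{local height def}, not a change of equation. Moreover the transport step is wrong as stated: $f$ is \emph{not} unchanged under the relevant coordinate changes. Under $x=c^2x'+r$ with $u(c)>0$, both the max-term and the term $\tfrac{1}{12}u(\Delta)$ change, and they compensate only at points whose reduction on the less minimal model is nonsingular; for instance a point with unit $x$-coordinate on a good minimal model has $f=0$ there but $f=\tfrac{1}{12}u(\Delta')>0$ on the rescaled model---no contradiction only because that point reduces to the cusp of the rescaled model, a case the theorem excludes, and identifying this locus is precisely the content your one-line claim skips. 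A genuine repair is to localise your uniqueness argument to $E_0(K)$, which is closed under doubling: both $\lambda$ and $f$ satisfy the duplication law there, their difference $h$ is bounded, and $h([2^n]P)=4^n h(P)$ forces $h\equiv 0$ (the last subcase then needs $\widetilde{\phi}_2$ and $\widetilde{\psi}_2^{\,2}$ to have no common root at a \emph{nonsingular} point of the singular reduced cubic, which requires its own argument since the resultant is now $\equiv 0$); alternatively, handle multiplicative reduction by the Tate-curve computation \cite[Ch.~VI, Theorem~4.2]{Silverman2}.
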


The local height is simply a function on the points of $E$ and does not depend on $m$ in any way, so its use is not related to whether the elliptic curve has complex multiplication or not. However, \Cref{functional} only considers integral multiples of a point, therefore we will extend this result to cover the case of complex multiplication as well.

\subsection{Elliptic curves with complex multiplication}\label{module}
\textcolor{red}{Let $\nu$ be a finite place associated to an $\OO_K$ prime $\p$ and $K_{\nu}$ be the completion of $K$ with respect to $\nu$. We} denote by \textcolor{red}{$E_0(K_{\nu})$} the set of points of $E(K_{\nu})$ with non--singular reduction modulo $\p$. From \cite[\S7, Proposition VII.6.3]{Silverman1}, we know that for $\textcolor{red}{n \geq 1}$, there is a subgroup filtration
\begin{equation}\label{filtration}
\cdots \subset E_{2}(K_{\textcolor{red}{\nu}}) \subset E_{1}(K_{\textcolor{red}{\nu}}) \subset E_0(K_{\textcolor{red}{\nu}}), \, E_{n}(K_{\textcolor{red}{\nu}})=\{P\in \textcolor{red}{E_{0}(K_\nu)}\colon \nu(x(P))\leq -2n\} \cup \{O\}.
\end{equation}
When $E$ has complex multiplication by $\Z[\omega]$, then in fact the above filtration forms a $\Z[\omega]$--module filtration. This was proven by Streng in \cite[Corollary 2.10]{Streng} using formal groups and the theory of N\'{e}ron models. 
\begin{theorem}\cite[Corollary 2.10]{Streng}\label{Streng module}
For any $n\geq 1$, the group \textcolor{red}{$E_{n}(K_\nu)$} is a $\Z[\omega]$--submodule of \textcolor{red}{$E(K_\nu)$}. Moreover, we have an isomorphism of $\Z[\omega]$--modules
\[
E_{n}(K_{\textcolor{red}{\nu}})/E_{n+1}(K_{\textcolor{red}{\nu}})\cong l, 
\]
where $l=\OO_{K_{\textcolor{red}{\nu}}}/\p$ is the residue field of $K_{\textcolor{red}{\nu}}$. 
\end{theorem}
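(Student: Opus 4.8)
The plan is to work locally and to transport everything to the formal group of $E$, where the endomorphisms act transparently. Since the filtration $\{E_n(K)\}$, the valuation $\nu$ and the residue field $l$ all depend only on the completion of $K$ at $\p$, I would first replace $K$ by its completion, so that $K$ becomes a local field complete with respect to $\nu$, with ring of integers $\OO_K$, maximal ideal $\m$, uniformiser $\pi$ and residue field $l=\OO_K/\m$. The embedding $F\subseteq K$ persists, and since $\Z[\omega]\subseteq\OO_K$ every $\alpha\in\Z[\omega]$ satisfies $\nu(\alpha)\geq 0$; this integrality is what makes the whole argument run. Because each $E_n(K)$ is already a subgroup, it suffices to prove stability under $[\omega]$ (together with the iso of the quotient), as $\Z$ and $[\omega]$ generate $\Z[\omega]$.

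Next I would recall the formal-group description of the filtration. By the standard theory (see \cite[Chapter VII]{Silverman1}), the map $P\mapsto z(P)=-x(P)/y(P)$ identifies $E_1(K)$ with $\hat E(\m)$ and, for each $n\geq 1$, restricts to a group isomorphism $E_n(K)\cong\hat E(\m^n)$, since $\nu(x(P))\leq -2n$ is equivalent to $\nu(z(P))\geq n$. Thus it suffices to prove both assertions for $\hat E(\m^n)$ equipped with the action induced by $\Z[\omega]$.

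The key input — and the main technical obstacle — is that each endomorphism $[\alpha]$, $\alpha\in\Z[\omega]$, induces an endomorphism $\hat\alpha$ of the formal group $\hat E$ defined over $\OO_K$, that is, given by a power series
\[
\hat\alpha(z)=\alpha z+(\text{higher-order terms})\in\OO_K[[z]],
\]
whose leading coefficient is the scalar by which $[\alpha]$ acts on the invariant differential, equal to $\alpha$ under the standard normalisation of the CM. Over primes of good reduction this follows by reducing $[\alpha]$; in general one invokes the Néron model of $E$ over $\OO_K$, whose functoriality extends $[\alpha]$ to the connected component and hence to the formal completion along the zero section, forcing $\OO_K$-integral coefficients. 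Granting this, since $\nu(\alpha)\geq 0$ and all coefficients lie in $\OO_K$, for $z\in\m^n$ we get $\nu(\hat\alpha(z))\geq n$, so $\hat\alpha$ maps $\hat E(\m^n)$ into itself. Hence each $E_n(K)$ is a $\Z[\omega]$-submodule, which is the first assertion.

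For the quotient I would reduce modulo $\m^{n+1}$. Writing the formal group law as $F(z_1,z_2)=z_1+z_2+(\text{degree}\geq 2)$ and noting that for $z_1,z_2\in\m^n$ the higher-order terms lie in $\m^{2n}\subseteq\m^{n+1}$ (as $n\geq 1$), the reduction map
\[
\hat E(\m^n)\longrightarrow\m^n/\m^{n+1},\qquad z\mapsto z\bmod\m^{n+1},
\]
is a group homomorphism with kernel exactly $\hat E(\m^{n+1})$, inducing $\hat E(\m^n)/\hat E(\m^{n+1})\cong\m^n/\m^{n+1}$. The same degree estimate applied to $\hat\alpha(z)=\alpha z+(\text{degree}\geq 2)$ gives $\hat\alpha(z)\equiv\alpha z\pmod{\m^{n+1}}$, so the induced $\Z[\omega]$-action on $\m^n/\m^{n+1}$ is multiplication by $\bar\alpha$, the image of $\alpha$ in $l$. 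Choosing $\pi^n$ as generator identifies $\m^n/\m^{n+1}\cong\OO_K/\m=l$ as $\OO_K$-modules, hence as $\Z[\omega]$-modules via $\Z[\omega]\hookrightarrow\OO_K\twoheadrightarrow l$; this is precisely the claimed $\Z[\omega]$-equivariant isomorphism $E_n(K)/E_{n+1}(K)\cong l$. The only real difficulty is the integrality and leading-term identification of $\hat\alpha$; once that is in hand, everything else is a routine degree count in the maximal ideal.
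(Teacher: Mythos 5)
The paper never proves this theorem: it is imported verbatim from Streng, and the surrounding text only records that Streng's proof uses ``formal groups and the theory of N\'{e}ron models.'' Your proposal is therefore a reconstruction of the cited argument rather than an alternative to anything in the paper, and it follows exactly the route the paper attributes to Streng: identify $E_{n}(K)$ with $\hat{E}(\m^{n})$ via $z=-x/y$, show that each $\alpha\in\Z[\omega]$ acts on $\hat{E}$ by a power series $\hat{\alpha}(z)=\alpha z+O(z^{2})$ with coefficients in $\OO_{K}$, and read off both assertions from the degree count modulo $\m^{n+1}$. The individual steps you write down (the equivalence $\nu(x(P))\leq -2n \Leftrightarrow \nu(z(P))\geq n$, the leading-coefficient identification via the invariant differential, the computation of the graded quotient and its $\Z[\omega]$-equivariance) are correct.

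Two points must be addressed before this is complete. First, a genuine gap: the formal completion of the N\'{e}ron model along its zero section is the formal group of a \emph{minimal} Weierstrass equation, whereas the paper's standing convention (\Cref{Notations}) is an integral but not necessarily minimal equation, and the filtration (\ref{filtration}) is defined by the $x$-coordinate of \emph{that} equation. As written, your N\'{e}ron-model step yields integrality of $\hat{\alpha}$ only for the minimal model's formal group. The repair is short but has to be said: if $W$ is the given integral model and $W'$ a minimal one, the change of variables between them has $u,r,s,t\in\OO_{K}$, so on formal parameters $z_{W'}=u\,g(z_{W})$ with $g(z)=z+O(z^{2})\in\OO_{K}[[z]]$; writing $E_{n}^{W}$ for the filtration computed from $W$, one gets $E_{n}^{W}(K)=E_{n+\nu(u)}^{W'}(K)$ for all $n\geq 1$, so both assertions for the given filtration follow from the minimal one by this shift (alternatively, $\hat{\alpha}_{W}=g^{-1}\bigl(u^{-1}\hat{\alpha}_{W'}(u\,g(z))\bigr)$ is integral, since conjugating by $z\mapsto uz$ multiplies the coefficient of $z^{k}$ by $u^{k-1}$). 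Second, your opening move of replacing $K$ by its completion is doing real work in the second assertion, and you should say so: over the number field itself the map $E_{n}(K)/E_{n+1}(K)\to\m^{n}/\m^{n+1}\cong l$ is in general only injective (if $E(K)$ is finite and every nonzero point has $\nu(x)\geq 0$, then $E_{n}(K)=\{O\}$ for all $n\geq 1$ while $l\neq 0$), so the isomorphism with $l$ is genuinely a statement about $E(K_{\nu})$ --- which is how Streng states it and how the paper later uses it, e.g.\ for $E(K_{\nu})/E_{0}(K_{\nu})$. That defect lies in the paper's transcription of Streng's statement, not in your argument, but your write-up should make explicit that completeness is what delivers surjectivity.
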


For our purpose, we further need the group $E(K_{\textcolor{red}{\nu}})/E_{0}(K_{\textcolor{red}{\nu}})$ to be a $\Z[\omega]$--module since we are trying to extend the explicit formula by Cheon and Hahn. In later sections, we will often talk about the annihilator of $P$ in the $\Z[\omega]$--module $E(K_{\textcolor{red}{\nu}})/E_{0}(K_{\textcolor{red}{\nu}})$.

Under the assumption that the elliptic curve \( E/K \) has complex multiplication by \( \mathbb{Z}[\omega] \), \( E \) has an integral \( j \)-invariant (see \cite[Theorem 6.1, p. 140]{Silverman2}), and therefore it has potential good reduction (see \cite[Proposition 5.5, p. 197]{Silverman1}). In other words, there exists a field extension \( L/K \) such that \( E/L \) has good reduction. Since the local height function is invariant under field extensions, for a singular point \( P \in E(K) \), we only need to compute \( \lambda_L(P) \) using \Cref{formula}. The only complication is that, in the case of good reduction, the formula in \Cref{formula} depends on the \( x \)-coordinate of the point \( P \). When we extend the field for the elliptic curve, the Weierstrass equation may no longer be minimal, and hence we must perform a change of coordinates before reduction modulo $\p$.

\begin{remark}
\textcolor{red}{In the proof of \cite[Lemma 25]{Panda}, the author stated that if we have additive reduction, then we can consider a field extension where the curve has good reduction. The argument for this case is not fully-explained in \cite{Panda}, therefore the argument is elaborated here.} 
\end{remark}

Now let $E/K$ be an elliptic curve with potential good reduction $\Mod \nu$, with $\nu$ being a finite place in $K$, and let
\begin{equation}\label{Weierstrass}
E\colon y^2+a_1xy+a_3y=x^3+a_2x^2+a_4x+a_6
\end{equation}
be the Weierstrass equation for $E/K$. We will construct a finite extension $L/K$ such that $E/L$ has good reduction, with $w$ the finite place that extends $\nu$ in $L$. Now we use the formula
\begin{equation}\label{chg of coord}
x_{K}=u^2x_{L}+\gamma;\quad y_{K}=u^3y_{L}+u^2sx_{L}+t
\end{equation}
to give a $w$--minimal Weierstrass equation for $E/L$, where $u, s, t, \gamma \in L_{w}$, $(x_{K}, y_{K})$ and $(x_{L}, y_{L})$ are coordinates of a point $P$ before and after the change of coordinates respectively. In particular, $w(u)\geq 1$ since the original Weierstrass equation is not minimal. 

\begin{remark}
In \S5, where we give the proof of \Cref{CM recurrence}, we will use the valuation $\nu(\cdot)$ as an exponent later; $\nu$ therefore has to be a normalised discrete valuation. Given our purpose, as an extension of the finite place $\nu$, the finite place $w$ here is not a normalised discrete valuation (so the valuation group of $w$ need not be the integers), but just a non-Archimedean valuation.
\end{remark}

\begin{lemma} \label{reduction lemma}
Using the notation in (\ref{chg of coord}), if \( P = (x_K, y_K) \Mod \nu$ is singular with additive reduction, then \( (x_K, y_K) \equiv (\gamma, t) \Mod w \).  
\end{lemma}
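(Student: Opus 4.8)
The plan is to reduce the statement to identifying the singular point of the reduction of the $K$-model. Since $P$ has singular reduction modulo $\nu$, it cannot reduce to the point at infinity, so $\nu(x_K), \nu(y_K)\geq 0$ and hence $x_K, y_K$ are $w$-integral; moreover a curve with additive reduction has a unique singular point on its reduction (a cusp), so the reduction $(\bar x_K, \bar y_K)$ of $P$ is forced to be that point. Consequently the assertion $(x_K, y_K)\equiv(\gamma, t)\Mod w$ is equivalent to the purely geometric claim that the singular point of $\bar E_K$ is $(\bar\gamma, \bar t)$, and it is this that I would prove. Geometrically, since $w(u)\geq 1$ the isomorphism (\ref{chg of coord}) degenerates modulo $w$ and contracts the smooth reduction of $E/L$ onto the single point $(\bar\gamma, \bar t)$, which must therefore be the singular point; the computation below makes this rigorous.

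\textbf{Congruences from the transformation formulae.} First I would record the standard Weierstrass transformation formulae relating the coefficients $a_i$ of the $K$-model to the coefficients $a_i'$ of the $w$-minimal model $E/L$ under the change of coordinates (\ref{chg of coord}) (see \cite{Silverman1}), where $\gamma, s, t$ may be taken $w$-integral. Because $E/L$ is $w$-minimal its coefficients $a_i'$ are $w$-integral, while $w(u)\geq 1$. Reading the transformation formulae as expressions $a_i' = u^{-i}(\,\cdots\,)$ and using $w(u)\geq 1$, the $w$-integrality of each $a_i'$ forces a congruence modulo $w$; explicitly I would extract
\[
a_3 + \gamma a_1 + 2t \equiv 0, \qquad a_2 - s a_1 + 3\gamma - s^2 \equiv 0, \qquad a_4 - s a_3 + 2\gamma a_2 - (t+\gamma s)a_1 + 3\gamma^2 - 2st \equiv 0,
\]
together with the analogous relation coming from $a_6'$, all modulo $w$.

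\textbf{Verification that $(\bar\gamma,\bar t)$ is singular.} It then remains to verify that $(\bar\gamma, \bar t)$ is a singular point of $\bar E_K$, i.e. that it lies on the curve and annihilates both partial derivatives
\[
\partial_y F = 2y + a_1 x + a_3, \qquad \partial_x F = a_1 y - 3x^2 - 2a_2 x - a_4
\]
modulo $w$. The vanishing of $\partial_y F$ at $(\bar\gamma, \bar t)$ is exactly the first congruence above. For $\partial_x F$ I would substitute the congruence for $a_2$ and then the one for $a_4$; after cancellation the expression collapses to a scalar multiple of $a_3 + \gamma a_1 + 2t$, which vanishes. The curve equation $F(\bar\gamma, \bar t)=0$ is, up to sign, precisely the congruence coming from $a_6'$. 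Hence $(\bar\gamma, \bar t)$ is singular, so it is \emph{the} singular point, completing the proof.

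\textbf{Main obstacle.} I expect the main obstacle to be the bookkeeping in the $\partial_x F$ computation, where the $a_2'$- and $a_4'$-congruences must be combined and seen to reduce to a multiple of the $a_3'$-congruence. Two further points require care: justifying that $\gamma, s, t$ may be chosen $w$-integral, so that the reductions of the transformation formulae are meaningful, and checking that every step is characteristic-independent (no division by $2$ or $3$ is used, only the integrality congruences themselves), so that the argument remains valid at primes $w$ lying above $2$ and $3$.
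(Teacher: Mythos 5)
Your proof is correct, but it takes a genuinely different route from the paper's. The paper splits into residue characteristics: away from $2$ it completes the square, factors the cubic as $y'^2=(x'-e_1)(x'-e_2)(x'-e_3)$, passes to Legendre form over $L=K\bigl(e_1,(e_2-e_1)^{3/2}\bigr)$, and uses the cusp condition to force $e_1\equiv e_2\equiv e_3 \bmod w$, giving $(x_K',y_K')\equiv(e_1,0)$; above $2$ it runs a separate, rather long comparison of coefficients against the Deuring normal form. You instead avoid any case split: you observe that $P$ must reduce to the unique singular point of the reduced $K$-model, and then verify directly that $(\bar\gamma,\bar t)$ is that singular point, using only the standard transformation formulae $u^{i}a_i'=(\cdots)$ together with $w(u)\geq 1$ and the $w$-integrality of the $a_i'$ and of $\gamma,s,t$; the key identity $\partial_x F(\gamma,t)\equiv -s\,(a_3+\gamma a_1+2t)\equiv 0 \bmod w$ does go through as you claim, and no division by $2$ or $3$ occurs anywhere. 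What your approach buys is uniformity (all residue characteristics at once, including $2$ and $3$), brevity, and independence of the particular minimal model chosen — you prove the congruence for \emph{any} integral transformation (\ref{chg of coord}) with $w(u)\geq 1$, which is in fact how the lemma is used later (in the proof that $E_0(K)$ is a $\Z[\omega]$-submodule and in \Cref{finite values}). What the paper's construction buys is explicitness: it exhibits the extension $L$ and identifies $(\gamma,t)=(e_1,0)$ concretely in odd characteristic. The one ingredient you must cite to close your argument is the standard fact that a transformation from an integral Weierstrass equation to a $w$-minimal one can be taken with $u,\gamma,s,t$ all $w$-integral (Silverman, AEC, VII.1.3); you flag this yourself, and with that reference supplied the proof is complete.
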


\begin{proof}
We first assume that \( w \) and \( \nu \) are finite places not associated with \( 2 \). Note that by performing the linear transformation $(x, y)\mapsto \left(x, \frac{1}{2}(y-a_{1}x-a_3)\right)$, we can always transform the Weierstrass equation of $E/K$ into the form
\begin{equation}\label{transform1}
E\colon y^2=4x^3+b_2^2+2b_4x+b_6.
\end{equation}
Since the finite places are assumed not to be associated with $2$, this does not affect the type of reduction $\Mod \nu$. Therefore, without loss of generality we will assume $E/K$ has a Weierstrass equation of the form (\ref{transform1}). 

Following the proof of \cite[Chapter III.1, Proposition 1.7(a), page 49]{Silverman1}, we know that if we take the substitution $(x', y')=(x, 2y)$ then we can factorise the cubic equation over $\bar{\Q}$ to get the equation
\[
y'^2=(x'-e_1)(x'-e_2)(x'-e_3),
\]
with $e_{i}$'s are seen to be distinct. Define $L=K\left(e_1, (e_2-e_1)^{\frac{3}{2}}\right)$, and make the substitution 
\[
x_{K}'=(e_2-e_1)x_{L}+e_{1}; \quad y_{K}'=(e_{2}-e_{1})^{\frac{3}{2}}y_{L}
\]
to give the elliptic curve $E/L$ in Legendre form 
\[
E\colon y^2=x(x-1)(x-\lambda), \lambda =\frac{e_{3}-e_{1}}{e_{2}-e_{1}} \in L\setminus \{0, 1\}.
\]

We claim that this last equation has coefficients which are $w$--adic integers, and that it has good reduction modulo $w$, so this is indeed a $w$--minimal Weierstrass equation for $E/L$. Indeed, as split multiplicative reduction is invariant under field extensions, we see from the proof of  \cite[Chapter VII.5, Proposition 5.4(c), page 197]{Silverman1} that we must have $w(\lambda) \geq 0$ and good reduction. By assumption, $P$ is a cusp point $\Mod \nu$ and therefore $e_1\equiv e_2 \equiv e_3 \mod w$, and in particular $e_2-e_1 \equiv 0 \mod w$. From (\ref{transform1}), we then have $(x_K', y_K') \equiv (e_1, 0) \mod w$. 

In the case where \(w\) and \(v\) are associated to \(2\), we use the Deuring normal form and the proof is an elaboration of \cite[Appendix A, Proposition 1.2(b), 1.3, page 410--412]{Silverman1}. Given a general Weierstrass equation (\ref{Weierstrass}), in characteristic 2, the $j$--invariant is given by $j=\frac{a_{1}}{\Delta}$. Since we are assuming $E/K$ is singular under reduction modulo $\nu$ and $j$ is integral (so $j$ is not infinity modulo $\nu$), we have $\Delta \equiv 0 \Mod \nu$ and this implies we must have $a_{1} \equiv 0 \Mod \nu$. According to \cite[Appendix A, Proposition 1.1(c)]{Silverman1}, we use the transformation 
\[
x=x'+a_{2}; \quad y=y'
\]
to give 
\begin{equation}\label{transform}
E'/K\colon y'^2+a_{1}x'y'+(a_1a_2+a_3)y'=x'^3+4a_2x'^2+(5a_{2}^2+a_4)x'+(2a_{2}^3+a_{4}a_{2}+a_{6}),
\end{equation}
which has the form 
\[
y^2+b_3y=x^3+b_4x+b_6
\]
under reduction modulo $\nu$, with $\Delta = b_3^4\equiv(a_1a_2+a_3)^4 \Mod \nu$. This implies 
\[
a_1a_2+a_3 \equiv a_3^4 \equiv b_3^4 \equiv 0 \Mod \nu.
\]

Since $E'/K$ has potential good reduction, we can transform the reduction of $E' \bmod \nu$ into the Deuring Normal form 
\begin{equation}\label{Deuring}
E_{\alpha}/\F_{w}\colon y^2+\alpha xy +y =x^3, \F_{w}\,\, \text{an extension of the finite field} \,\, \F_{\nu},
\end{equation}
which has good reduction everywhere, modulo $\nu$ in particular. To do so, we will need to find an extension $L/K$ such that $w$ is a prime of $L$, and a transformation 
\[
x'_{K}=x_{L}u^2+\gamma; \quad y'_{K}=y_{L}u^3+u^2sx_{L}+t
\]
such that $E_{\alpha}$ is the reduction of $E/L \bmod w$. Apply the above transformation to (\ref{transform}), we get 
\begin{align*}
E/L\colon y_{L}^2 &+ u^{-1}(2s+a_1)x_{L}y_{L} 
+ u^{-3}(a_1a_2+a_{1}\gamma+a_3+2t)y_{L} \\
&= x_{L}^3 + u^{-2}(3\gamma-4a_{2}-a_1s-s^2)x_{L}^{2} \\
&\quad + u^{-4}(3\gamma^2-a_1a_2s-a_1\gamma s-5a_2^2-8a_2\gamma-a_3s-a_1t-2st-a_4)x_{L} \\
&\quad + (\gamma^3-2a_2^3-5a_2^2\gamma-4a_2\gamma^2-a_1a_2t-a_1\gamma t-a_2a_4-a_4\gamma-a_3t-t^2-a_6).
\end{align*}
Reduction modulo $w$ will give the Deuring Normal form. Recall that $\nu, w$ are valuations associated to $2$ and $a_1\equiv 0 \bmod \nu$, hence $a_1\equiv 0 \bmod w$. Under reduction modulo $w$, $E/L$ has the form
\begin{equation}\label{reduce}
\begin{aligned}
y_{L}^2+u^{-3}a_3y_L
&\equiv x^3+u^{-2}(\gamma-s^2)x_L^{2}+u^{-4}(\gamma^2-a_2^2-a_3s-a_4)x_L \\
&\quad+(\gamma^3-a_2^2\gamma-a_2a_4-a_4\gamma-a_3t-t^2-a_6).
\end{aligned}
\end{equation}
After reduction modulo $w$, (\ref{reduce}) is in Deuring form, by comparing coefficients of (\ref{reduce}) and (\ref{Deuring}), we found the following relationships over $L$ modulo $w$:
\[
a_3u^{-3}\equiv 0, \alpha \equiv 0, \gamma\equiv s^2, \gamma^2\equiv a_4+a_2^2+a_3s, \gamma^3-a_2^2\gamma-a_4\gamma-a_4a_2-a_3t-a_6-t^2\equiv 0.
\]
Note that we always have $u\equiv 0 \bmod w$, so the first relation implies $a_3 \equiv 0 \bmod w$ necessarily and hence we have 
\[
\gamma^2\equiv a_4+a_2^2, t^2\equiv a_4a_2+a_6 \pmod w.
\] 
On the other hand, if we compute the partial derivatives of (\ref{transform}) to find the coordinates of the singular point mod $\nu$, we require 
\[
\frac{\partial}{\partial y} \equiv a_3\equiv 0, \frac{\partial}{\partial x}\equiv -3x'^2+a_4+a_2^2\equiv 0 \pmod \nu.
\]
This implies the $x$--coordinate of the singular point satisfies $(x'_{K})^2 \equiv a_{4}+a_{2}^2 \equiv \gamma^{2} \bmod w$. This implies $(x'_{K})^{2}-\gamma^{2} \equiv (x'_{K}-\gamma)^2 \equiv 0 \bmod w$, hence we have $x'_{K}\equiv \gamma \bmod w$.

Now reduce $E'/L$ reduction modulo $w$, using the identities above we obtain 
\begin{align*}
(y'_K)^2
&\equiv (x')_K^3+(a_2^2+a_4)x'_{K}+(a_4a_2+a_6) \\
&\equiv 2(x')_K^2+t^2 \\
&\equiv t^2	\pmod w.
\end{align*}
This implies $(y'_{K})^{2}-t^{2} \equiv (y'_{K}-t)^2 \equiv 0 \bmod w$, hence we have $y'_{K}\equiv t \bmod w$, as we required.
\end{proof} 

\begin{theorem}
The group $E_{0}(K)$ is a $\Z[\omega]$--submodule of $E(K)$.
\end{theorem}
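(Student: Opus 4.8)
The plan is to exploit that $E_0(K)$ is already known to be a subgroup of $E(K)$, hence a $\Z$-submodule. Since every $z = a+b\omega \in \Z[\omega]$ acts by $[z]P = [a]P + [b]([\omega]P)$ and $E_0(K)$ is closed under addition and negation, the ring generation of $\Z[\omega]$ by $1$ and $\omega$ means that $\Z[\omega]$-stability is equivalent to stability under $[\omega]$ alone. Thus the whole problem reduces to proving $[\omega]P \in E_0(K)$ for each $P \in E_0(K)$.

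If $E$ has good reduction modulo $\p$ then $E_0(K) = E(K)$ is a $\Z[\omega]$-module by hypothesis and there is nothing to prove, so I would assume $E$ has additive reduction --- the only bad reduction type possible here, since the integrality of $j$ forces potential good reduction and hence excludes the multiplicative case. I would then pass to the finite extension $L/K$ constructed for \Cref{reduction lemma}, over which $E$ acquires good reduction at the place $w \mid \nu$. Over the good model the reduction map $\rho_w\colon E(L) \to \tilde E(\F_w)$ is a group homomorphism, and because $[\omega]$ is already defined over $K \subseteq L$ it reduces to an endomorphism $\widetilde{[\omega]}$ of the smooth curve $\tilde E/\F_w$ fixing the identity. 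Hence $\rho_w$ is $\Z[\omega]$-equivariant, i.e. $\rho_w([\omega]P) = \widetilde{[\omega]}(\rho_w(P))$.

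With this in hand the cleanest route is to identify $E_0(K)$ with the set of points reducing into the identity component $\mathcal{E}^0$ of the N\'eron model of $E/K$ over $\OO_{K,\p}$. The endomorphism $[\omega]$ extends to $\mathcal{E}$ by the N\'eron mapping property, and any homomorphism of smooth group schemes carries the identity component into itself; hence $[\omega]\mathcal{E}^0 \subseteq \mathcal{E}^0$, and therefore $[\omega]E_0(K) \subseteq E_0(K)$. This is exactly the analogue, for the identity component, of the $\Z[\omega]$-stability of the deeper filtration pieces $E_n(K)$ established in \Cref{Streng module}, and together with the reduction to $[\omega]$-closure from the first paragraph it finishes the proof.

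The delicate point is additive reduction in residue characteristic $2$ or $3$, where the component group $E(K)/E_0(K)$ may itself contain $p$-torsion, so $E_0(K)/E_1(K)$ cannot simply be recognised as the prime-to-$p$ part of $E(K)/E_1(K)$ and a purely module-theoretic shortcut is unavailable. This is precisely where passing to the extension $L$ of \Cref{reduction lemma} is essential: over $L$ the singular fibre is replaced by good reduction, the equivariant reduction $\rho_w$ lands in a smooth curve, and the explicit normal forms behind \Cref{reduction lemma} --- notably the Deuring normal form when $w \mid 2$ --- let one track where the $\nu$-singular points land modulo $w$ and verify directly that $\widetilde{[\omega]}$ sends the reduction of a non-singular point to the reduction of a non-singular point. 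This geometric check is the concrete incarnation of $[\omega]$ preserving $\mathcal{E}^0$, and I expect it to be the main obstacle to writing a fully self-contained argument.
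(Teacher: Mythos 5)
Your proposal is correct, but its core argument is genuinely different from the paper's. The paper never invokes the N\'eron model directly: it uses the explicit change-of-coordinates computation of \Cref{reduction lemma} to show that every $P \in E_0(K)$ satisfies $w(x_L(P)) \leq -2w(u)$ over the extension $L/K$ where $E$ attains good reduction, hence that $E_0(K) = E(K) \cap E_{i_0}(L)$ with $i_0 = w(u)$; since $E(K)$ is $[\omega]$-stable (as $E$ has CM over $K$) and the filtration piece $E_{i_0}(L)$ is $[\omega]$-stable by Streng's result (\Cref{Streng module}), the intersection is $[\omega]$-stable. Your route instead extends $[\omega]$ to the N\'eron model $\mathcal{E}$ over $\OO_{K,\p}$ by the mapping property, notes that the extension is a group-scheme homomorphism (by uniqueness in the mapping property) and therefore carries the fibrewise identity component $\mathcal{E}^0$ into itself, and identifies $E_0(K)$ with the points reducing into $\mathcal{E}^0$; that identification ($\mathcal{E}^0$ is the smooth locus of the minimal Weierstrass model) is standard but should be cited, and it implicitly assumes $E_0(K)$ is defined via a minimal equation, the same convention the paper relies on. What each approach buys: yours is shorter, avoids both \Cref{reduction lemma} and \Cref{Streng module}, and is uniform in the residue characteristic; the paper's stays within explicit Weierstrass coordinates and the filtration (\ref{filtration}), consistent with its stated aim of avoiding heavy machinery, and it reuses \Cref{reduction lemma}, which is needed anyway for \Cref{finite values}. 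Two remarks on economy in your write-up: once you run the N\'eron-model argument, the passage to $L$ and the equivariant reduction map $\rho_w$ of your second paragraph play no role and can be deleted; and the worry in your final paragraph about residue characteristic $2$ or $3$ is moot, since the identity-component argument needs no case analysis --- the Deuring-normal-form computations are only required if one insists on the concrete coordinate approach, which is precisely what the paper's \Cref{reduction lemma} carries out.
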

\begin{proof}
Let $P = (x_K, y_K) \in E_0(K)$. Using the notations in (\ref{chg of coord}), we know that $x_K \not\equiv \gamma \Mod{w}$ by \Cref{reduction lemma}, so we must have $w\left(x_L\right) \leq -2w\left(u\right)$. This implies that there is an injection map $E_0(K) \xhookrightarrow{} E_{i_0}(L)$, where $i_0 = w(u) \geq 1$ and for $n\geq 1$, we also have a filtration
\begin{equation}\label{w filtration}
\cdots E_{3}(L_{w}) \subset E_{2}(L_{w}) \subset E_{1}(L_{w}) \subset E_0(L_{w}),
\end{equation}
where $E_{n}(L_{w})=\{P\in E(L_{w})\colon w(x_L(P))\leq -2nw(\pi_w)\} \cup \{O\}$ and $\pi_w$ is a uniformiser for $w$. Also note that if $P=(x_{L}, y_L) \in E_{i_0}(L)$, then by definition $w(x_L) \leq -2w(u) = -2i_0$. From (\ref{chg of coord}), this tells us that $w(x_K-\gamma)\leq 0$, which is equivalent to $x_K \not\equiv \gamma \bmod w$. Overall, we have $E_0(K) = E(K) \cap E_{i_0}(L)$. 

From \Cref{Streng module}, we know that both sets on the right-hand side are closed under the $[\omega]$ map. Therefore, $E_0(K)$ is also closed under the $[\omega]$ map, which proves that it is a $\mathbb{Z}[\omega]$-submodule of $E(K_\nu)$.
\end{proof}

\textcolor{red}{Now that we have shown $E(K_{\nu})/E_{0}(K_{\nu})$ to be a $\Z[\omega]$--module, we make the following definition.}

\begin{defn}\label{ANN}
\textcolor{red}{
Let $E$ be an elliptic curve defined by a Weierstrass equation with coefficients in $\OO_K$ and $\nu$ be a normalised discrete valuation with respect to a prime ideal $\p$ of $\OO_K$. For $P \in E(K)$, denote $\A(\p, P)$ to be the annihilator of $P$ in the $\Z[\omega]$--module $E(K_{\nu})/E_{0}(K_{\nu})$.
}
\end{defn}
\section{Local height values of CM points}
\subsection{Extended formula for local height function values}
When we replace $m \in \Z$ with $\alpha=a+b\omega \in \Z[\omega]$ in \Cref{functional}, a natural guess is to replace the square function $m \mapsto m^2$ with the norm function $\alpha \mapsto \text{Norm}(\alpha)$. However, note that net polynomials treat $P$ and $[\omega]P$ as two different points on $E(K)$, so $[a+b\omega]P$ is interpreted as applying two endomorphisms $[a]$ and $[b]$ to $P$ and $[\omega]P$ respectively, rather than applying the single endomorphism $[a+b\omega]$ to $P$ only. Therefore, the local height value \textcolor{red}{remains} a quadratic function in $a$ and $b$ with coefficients being the normalised local heights.

\begin{prop}\label{functional eq}
Let $E$ be an elliptic curve defined over a number field $K$, $\bm{v}=(a, b) \in \Z^{2}$. Let $\bm{P}=(P, Q) \in E(K)^{2}$ \textcolor{red}{and assume $\bm{v} \cdot \bm{P} \neq O$}. If $\nu$ is the finite place associated to the prime $\p$, then the normalised N\'{e}ron local height function satisfies
\[
\widetilde{\lambda}(\bm{v} \cdot \bm{P})=a^{2}\widetilde{\lambda}(P)+b^{2}\widetilde{\lambda}(Q)+ab\left(\widetilde{\lambda}(P+Q)-\widetilde{\lambda}(P)-\widetilde{\lambda}(Q)\right)+\nu\left(\Psi_{(a, b)}(\bm{P})\right),
\]
where $\Psi_{(a, b)}$ is the elliptic net polynomial evaluated at $\bm{v}=(a, b)$. 
\end{prop}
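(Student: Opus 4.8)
The plan is to recognise the right-hand side as the unique quadratic form on $\Z^2$ taking prescribed values at $(1,0)$, $(0,1)$ and $(1,1)$, and then to show that the left-hand side, corrected by the net polynomial, is also a quadratic form with the same three values. Concretely, I would set
\[
f(\bm{v}) \coloneqq \widetilde{\lambda}(\bm{v}\cdot\bm{P}) - \nu\left(\Psi_{\bm{v}}(\bm{P})\right), \qquad \bm{v}=(a,b)\in\Z^2,
\]
and let $g(a,b)$ denote the right-hand side of the asserted identity. Since $E$ is defined over the number field $K$ and $\nu$ is the normalised valuation at $\p$, over $K$ the non-Archimedean valuation $u$ underlying $\widetilde{\lambda}$ coincides with $\nu$, so I will write $u=\nu$ throughout.

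The key step is to prove that $f$ obeys the parallelogram law (\ref{bye}). Applying the normalised quasi-parallelogram law (\ref{tt}) to the two points $\bm{v}\cdot\bm{P}$ and $\bm{u}\cdot\bm{P}$, and using $\Z$-linearity of $\bm{w}\mapsto\bm{w}\cdot\bm{P}$ to rewrite their sum and difference as $(\bm{v}\pm\bm{u})\cdot\bm{P}$, gives
\[
\widetilde{\lambda}\big((\bm{v}+\bm{u})\cdot\bm{P}\big)+\widetilde{\lambda}\big((\bm{v}-\bm{u})\cdot\bm{P}\big)=2\widetilde{\lambda}(\bm{v}\cdot\bm{P})+2\widetilde{\lambda}(\bm{u}\cdot\bm{P})+u\big(x(\bm{v}\cdot\bm{P})-x(\bm{u}\cdot\bm{P})\big).
\]
The remaining $x$-coordinate difference is exactly what \Cref{net poly}(d) controls: taking $\nu$ of the identity $\Psi_{\bm{v}}^2\Psi_{\bm{u}}^2\big(x(\bm{v}\cdot\bm{P})-x(\bm{u}\cdot\bm{P})\big)=-\Psi_{\bm{v}+\bm{u}}\Psi_{\bm{v}-\bm{u}}$ and using $\nu(-1)=0$ yields
\[
u\big(x(\bm{v}\cdot\bm{P})-x(\bm{u}\cdot\bm{P})\big)=\nu(\Psi_{\bm{v}+\bm{u}})+\nu(\Psi_{\bm{v}-\bm{u}})-2\nu(\Psi_{\bm{v}})-2\nu(\Psi_{\bm{u}}).
\]
Substituting this back and collecting the net-polynomial terms turns the previous display precisely into $f(\bm{v}+\bm{u})+f(\bm{v}-\bm{u})=2f(\bm{v})+2f(\bm{u})$.

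Since $\R$ is $2$-torsion free, the parallelogram law forces $f$ to be a quadratic form, and the same is patent for $g$. A quadratic form on $\Z^2$ is a homogeneous quadratic polynomial in $(a,b)$, hence determined by its values at $(1,0)$, $(0,1)$ and $(1,1)$. Using the initial conditions $\Psi_{\bm{e}_1}=\Psi_{\bm{e}_2}=\Psi_{\bm{e}_1+\bm{e}_2}=1$ from \Cref{net poly}(b), I get $\nu(\Psi)=0$ at each of these indices, so $f(1,0)=\widetilde{\lambda}(P)=g(1,0)$, $f(0,1)=\widetilde{\lambda}(Q)=g(0,1)$ and $f(1,1)=\widetilde{\lambda}(P+Q)=g(1,1)$. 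Therefore $f=g$, which is the claim.

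The main obstacle is the bookkeeping of degenerate cases, since $\widetilde{\lambda}$ is defined only off $O$ and $\nu(\Psi_{\bm{v}})$ only makes sense when $\Psi_{\bm{v}}\neq 0$, i.e.\ when $\bm{v}\cdot\bm{P}\neq O$; one also needs $\bm{v}\cdot\bm{P}$ and $\bm{u}\cdot\bm{P}$ distinct with nonzero sum and difference for (\ref{tt}) and \Cref{net poly}(d) to apply. In the intended CM application one has $\bm{P}=(P,[\omega]P)$ with $P$ non-torsion, so $\bm{v}\cdot\bm{P}=[a+b\omega]P$ vanishes only for $\bm{v}=\bm{0}$, which removes essentially all degeneracies; in general I would either impose that $\bm{v}\cdot\bm{P}\neq O$ for the indices in play, or observe that the offending configurations are isolated and verify the parallelogram law on the complementary cofinite set, which still pins down the quadratic form $f$ from its three base values.
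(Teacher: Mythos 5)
Your proposal is correct in substance, but it is not the route the paper takes. The paper proves this proposition by a double induction on $(a,b)$: the axis cases $(a,0)$ and $(0,b)$ are disposed of by \Cref{functional}, the case $(1,-1)$ by the normalised quasi-parallelogram law (\ref{tt}), and each inductive step (first in $b$ for $a=1$, then in $a$, then negative $a$ by evenness of $\widetilde{\lambda}$) applies (\ref{tt}) together with part (d) of \Cref{net poly} to collapse the resulting valuation of an $x$-coordinate difference into net-polynomial terms. Your proof uses exactly the same two ingredients --- (\ref{tt}) and \Cref{net poly}(d) --- but packages them once and for all into the statement that $f(\bm{v})=\widetilde{\lambda}(\bm{v}\cdot\bm{P})-\nu\left(\Psi_{\bm{v}}(\bm{P})\right)$ satisfies the parallelogram law (\ref{bye}), and then appeals to rigidity: a quadratic form on $\Z^2$ is determined by its values at $\bm{e}_1$, $\bm{e}_2$, $\bm{e}_1+\bm{e}_2$. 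This is precisely the mechanism the paper itself deploys later in the proof of \Cref{strong} (the quadratic forms $\varepsilon$ and $\widehat{\varepsilon}$, with the rigidity step quoted from Stange), so your argument in effect unifies the two proofs; it is shorter, and it generalises verbatim to $n$-tuples of points and rank-$n$ nets, where the coordinate-by-coordinate induction would have to be redone. What the paper's induction buys in exchange is an automatic treatment of the boundary configurations: its degenerate indices lie on the axes, where \Cref{functional} applies directly.

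The one place where your write-up needs genuine repair is the degenerate pairs, and this is more than bookkeeping in your formulation: to invoke ``parallelogram law $\Rightarrow$ quadratic form'' (the converse stated after (\ref{bye}), valid since $\R$ is $2$-torsion free) you need the law for \emph{all} pairs $(\bm{v},\bm{u})$, including $\bm{u}=\bm{0}$ and $\bm{u}=\pm\bm{v}$, where neither (\ref{tt}) nor \Cref{net poly}(d) applies. You must therefore set $f(\bm{0})=0$ and either handle the pairs $\bm{u}=\pm\bm{v}$ via the duplication formula (property (iii) of \Cref{local height def}) together with a doubling identity for $\Psi_{2\bm{v}}$, or make your ``cofinite'' remark precise. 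The latter is doable for independent non-torsion $P,Q$: for instance $f(2,0)$ is pinned down by non-degenerate instances of the law alone, since the pairs $\left((1,1),(1,-1)\right)$, $\left((2,1),(0,1)\right)$ and $\left((1,2),(1,0)\right)$ yield a linear system expressing $f(2,0)$ in terms of values already determined from the three base values, and the general index is handled similarly. Once this is written out (or the statement is read, as the paper implicitly does, under the standing assumption that all points occurring are nonzero), your proof is a valid and arguably cleaner alternative to the paper's induction.
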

\begin{proof}
 We would first induct on $a \in \Z_{\geq 0}$, where our base cases are $a=0$ and $a=1$. For $a=1$, we will do subproofs of induction on $b\geq -1$ and  $b\leq 0$. Next, we will apply \textcolor{red}{the} inductive hypothesis \textcolor{red}{to} $a$ and once it is done, we use the symmetry argument $\widetilde{\lambda}(P) = \widetilde{\lambda}(-P)$ to establish the statement for $(a, b) \in \Z^2$. 

We first dispose of $(a, 0), (0, b)$ and $(a, b)=(1, -1)$. Note that if one of \(a\) or \(b\) is 0, then we can apply Corollary \ref{functional} to the point \([a]P\) or \([b]Q\) directly and the result holds. While for $(a, b)=(1, -1)$, this is just the quasi-parallelogram law: if $(a, b)=(1, -1)$, then 
\begin{align*}
\text{LHS}&=\widetilde{\lambda}(P-Q)\\
\text{RHS}
&=2\widetilde{\lambda}(P)+2 \widetilde{\lambda}(Q)-\widetilde{\lambda}(P+Q)+\nu\left(\Psi_{(1, -1)}(\bm{P})\right)\\
&=2\widetilde{\lambda}(P)+2 \widetilde{\lambda}(Q)-\widetilde{\lambda}(P+Q)+\nu\left(x(Q)-x(P)\right), 
\end{align*}
where $\Psi_{(1, -1)}(\bm{P})=x(Q)-x(P)$ was given in \cite[Proposition 3.8]{Stange}. Since $\nu\left(x(Q)-x(P)\right)=\nu\left(x(P)-x(Q)\right)$, rearrange the terms to resemble (\ref{tt}).


For the inductive step, we first induct on $a=1$ and $b\geq -1$. Fix \(b\) and suppose the statement is true for $a=1$ and each $b \in [-1, k]$ for some $k \in \N$, consider $(a, b)=(1, k+1)$. Apply the quasi-parallelogram law with \(P = P+[k]Q\) and \(Q = Q\) and so we get 
\begin{align*}
\widetilde{\lambda}(P+[k+1]Q)
&= \widetilde{\lambda}\left((P+[k]Q) + Q\right) \\
&= 2\widetilde{\lambda}\left(P+[k]Q\right) + 2\widetilde{\lambda}(Q) - \widetilde{\lambda}(P + [k-1]Q) + \nu\left(x\left(P+[k]Q\right) - x(Q)\right).
\end{align*}
Now apply the inductive hypothesis to 
\(\widetilde{\lambda}(P + [k]Q)\) and \(\widetilde{\lambda}(P + [k-1]Q)\), and then apply property~\ref{notation} in \Cref{net poly} to rewrite the expression for the valuation of the difference between the two $x$--coordinates, so that we have
\begin{align*}
\widetilde{\lambda}(P+[k+1]Q)
&= 2\widetilde{\lambda}(P) + 2k^{2}\widetilde{\lambda}(Q) + 2k\left(\widetilde{\lambda}(P+Q) - \widetilde{\lambda}(P) - \widetilde{\lambda}(Q)\right) + 2\nu\left(\Psi_{(1, k)}(\bm{P})\right) \\
&\quad + 2\widetilde{\lambda}(Q)- \widetilde{\lambda}(P)  \\
&\quad - (k-1)^{2}\widetilde{\lambda}(Q) - (k-1)\left(\widetilde{\lambda}(P+Q) - \widetilde{\lambda}(P) - \widetilde{\lambda}(Q)\right) \\
&\quad - \nu\left(\Psi_{(1, k-1)}(\bm{P})\right) + \nu\left(-\frac{\Psi_{(1, k+1)}(\bm{P})\Psi_{(1, k-1)}(\bm{P})}{\Psi_{(1, k)}^{2}(\bm{P})}\right) \\
&= \widetilde{\lambda}(P) + (k+1)^2\widetilde{\lambda}(Q) + (k+1)\left(\widetilde{\lambda}(P+Q) - \widetilde{\lambda}(P) - \widetilde{\lambda}(Q)\right) \\
&\quad + \nu\left(\Psi_{(1, k+1)}(\bm{P})\right).
\end{align*}

This shows that the statement holds for any $b \geq -1$ when $a=1$. Next, we induct on $a=1$ and $b\leq 0$. Again, fix \(b\) and suppose the statement is true for $a=1$ and each $b \in [-k, -1]$ for some $k \in \N$, consider $(a, b)=(1, -k-1)$. The proof goes similarly as above: first apply the quasi-parallelogram law with \(P = P+[-k]Q\) and \(Q = Q\), then the inductive hypothesis to \(\widetilde{\lambda}\left(P+[-k]Q\right)\) and \(\widetilde{\lambda}(P + [-k+1]Q)\) and finally apply property \ref{notation} in Proposition \ref{net poly}. We get 
\begin{align*}
\widetilde{\lambda}(P+[-k-1]Q)
&= \widetilde{\lambda}\left((P+[-k]Q) - Q\right) \\
&= 2\widetilde{\lambda}\left(P+[-k]Q\right) + 2\widetilde{\lambda}(Q) - \widetilde{\lambda}(P + [-k+1]Q) \\
&\quad + \nu\left(x\left(P+[-k]Q\right) - x(Q)\right) \\
&= 2\widetilde{\lambda}(P) + 2k^{2}\widetilde{\lambda}(Q) - 2k\left(\widetilde{\lambda}(P+Q) - \widetilde{\lambda}(P) - \widetilde{\lambda}(Q)\right)  \\
&\quad + 2\nu\left(\Psi_{(1, -k)}(\bm{P})\right) + 2\widetilde{\lambda}(Q) - \widetilde{\lambda}(P) - (-k+1)^{2}\widetilde{\lambda}(Q) \\
&\quad - (-k+1)\left(\widetilde{\lambda}(P+Q) - \widetilde{\lambda}(P) - \widetilde{\lambda}(Q)\right) \\
&\quad - \nu\left(\Psi_{(1, -k+1)}(\bm{P})\right) + \nu\left(-\frac{\Psi_{(1, -k+1)}(\bm{P})\Psi_{(1, -k-1)}(\bm{P})}{\Psi_{(1, -k)}^{2}(\bm{P})}\right) \\
&= \widetilde{\lambda}(P) + (-k-1)^2\widetilde{\lambda}(Q)  \\
&\quad - (k+1)\left(\widetilde{\lambda}(P+Q)  - \widetilde{\lambda}(P) - \widetilde{\lambda}(Q)\right) + \nu\left(\Psi_{(1, -k-1)}(\bm{P})\right).
\end{align*}

Overall, we have shown that the statement holds for $a=1$ and $b\in \Z$. Now we need to induct on $a\geq 0$. Suppose the statement holds for holds for each $a \in [-1, k]$ for some $k\in \N$, consider $(a, b)=(k+1, b)$. As before, we apply the quasi-parallelogram law with \(P = [k]P+[b]Q\) and \(Q = P\), then the inductive hypothesis to \(\widetilde{\lambda}([k]P+[b]Q\textcolor{red}{)}\) and \(\widetilde{\lambda}([k-1]P + [b]Q)\), and property \ref{notation} in \Cref{net poly}. We get 
\begin{align*}
\widetilde{\lambda}([k+1]P+[b]Q)
&= \widetilde{\lambda}\left(([k]P+[b]Q) + P\right) \\
&= 2\widetilde{\lambda}([k]P + [b]Q) + 2\widetilde{\lambda}(P) - \widetilde{\lambda}([k-1]P + [b]Q) \\
&\quad + \nu\left(x([k]P + [b]Q) - x(P)\right) \\
&= 2k^{2}\widetilde{\lambda}(P) + 2b^{2}\widetilde{\lambda}(Q) + 2kb\left(\widetilde{\lambda}(P+Q) - \widetilde{\lambda}(P) - \widetilde{\lambda}(Q)\right)  \\
&\quad + 2\nu\left(\Psi_{(k, b)}(\bm{P})\right) + 2\widetilde{\lambda}(P)- (k-1)^{2}\widetilde{\lambda}(P) - b^{2}\widetilde{\lambda}(Q)  \\
&\quad  - (k-1)b\left(\widetilde{\lambda}(P+Q) - \widetilde{\lambda}(P) - \widetilde{\lambda}(Q)\right) \\
&\quad - \nu\left(\Psi_{(k-1, b)}(\bm{P})\right) + \nu\left(-\frac{\Psi_{(k+1, b)}(\bm{P})\Psi_{(k-1, b)}(\bm{P})}{\Psi_{(k, b)}^{2}(\bm{P})}\right) \\
&= (k + 1)^{2}\widetilde{\lambda}(P) + b^{2}\widetilde{\lambda}(Q) + \nu\left(\Psi_{(k+1, b)}(\bm{P})\right)\\
&\quad \textcolor{red}{+(k+1)b\left(\widetilde{\lambda}(P+Q) - \widetilde{\lambda}(P) - \widetilde{\lambda}(Q)\right).}
\end{align*}
This shows that if the statement holds for $(a, b)$, then it holds for $(a+1, b)$ for $a\in \Z_{\geq 0}, b\in \Z$. By the symmetry argument, this also implies the statement holds for $a\in \Z_{\leq 0}$ and $b \in \Z$. Hence, the statement holds for any $(a, b) \in \Z^2$ by induction. 
\qedhere
\end{proof}
In the result above, there is no restriction about the points $P, Q\in E(K)$ chosen, so we can just apply the result to our specific setting of CM and get the corollary below. 
\begin{cor}\label{functional eq}
Let $E/K$ be an elliptic curve defined over a number field $K$ with complex multiplication by an order $\Z[\omega]$ in a quadratic imaginary field $F$. For any $\alpha=a+b\omega \in \Z[\omega]$ and any non-torsion points $\bm{P}=(P, \omega P) \in E(K)^{2}$, if $\nu$ is the finite place associated to the prime $\p$, the normalised N\'{e}ron local height function satisfies
\[
\widetilde{\lambda}([\alpha]P)=a^{2}\widetilde{\lambda}(P)+b^{2}\widetilde{\lambda}(\omega P)+ab\left(\widetilde{\lambda}([1+\omega]P)-\widetilde{\lambda}(P)-\widetilde{\lambda}(\omega P)\right)+\nu(\Psi_{(a, b)}(\bm{P})),
\]
where $\Psi_{(a, b)}$ is the elliptic net polynomial evaluated at $\bm{v}=(a, b)$, the vector notation for elements in $\Z[\omega]$.
\end{cor}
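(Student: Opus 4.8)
The plan is to obtain this corollary as an immediate specialisation of the two-variable formula established in the preceding proposition, applied to the pair $\bm{P}=(P,[\omega]P)$. The essential observation is a dictionary between the pair notation $(a,b)$ and the single CM endomorphism $\alpha=a+b\omega$: taking $Q=[\omega]P$ and $\bm{v}=(a,b)$ gives
\[
\bm{v}\cdot\bm{P}=[a]P+[b]\bigl([\omega]P\bigr)=[a+b\omega]P=[\alpha]P,
\qquad
P+Q=P+[\omega]P=[1+\omega]P.
\]
Substituting $Q=[\omega]P$ into the proposition and rewriting these two points produces exactly the asserted identity, with the mixed term $ab\bigl(\widetilde{\lambda}([1+\omega]P)-\widetilde{\lambda}(P)-\widetilde{\lambda}([\omega]P)\bigr)$ and the net-polynomial contribution $\nu(\Psi_{(a,b)}(\bm{P}))$.

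The only hypothesis that genuinely needs verification is that every height value in the formula is defined, i.e.\ that the relevant multiples of $P$ are nonzero. First I would record that a non-torsion point $P$ satisfies $[z]P\neq O$ for \emph{every} nonzero $z\in\Z[\omega]$, not merely for nonzero rational integers: since $\Z[\omega]$ is stable under complex conjugation, $\bar{z}\in\Z[\omega]$, and if $[z]P=O$ then $[\Nm(z)]P=[\bar{z}][z]P=O$ with $\Nm(z)=z\bar{z}$ a nonzero rational integer, contradicting that $P$ is non-torsion. Applying this with $z\in\{1,\omega,1+\omega\}$ shows that $P$, $[\omega]P$ and $[1+\omega]P$ are all distinct from $O$, and applying it with $z=\alpha$ (for $\alpha\neq 0$) shows $[\alpha]P\neq O$, so $\widetilde{\lambda}$ is defined at each point appearing in the statement and the proposition may be invoked.

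I expect no real obstacle: the substantive work was carried out in the induction of the preceding proposition, and the corollary is a clean substitution together with the degeneracy check above. The one point worth flagging for the reader is the distinction emphasised just before the statement, namely that the elliptic-net formalism regards $P$ and $[\omega]P$ as two independent base points rather than as $[\alpha]$ applied to a single point. This is precisely why the net-polynomial index is the pair $(a,b)$ and why the coefficient of $ab$ is a genuine difference of local heights rather than a term dictated by the norm form of $\alpha$; keeping this in mind is the only conceptual care the argument requires.
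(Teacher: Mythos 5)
Your proposal is correct and matches the paper's own derivation exactly: the paper obtains the corollary by noting that the preceding proposition places no restriction on the pair $(P,Q)$, so one simply substitutes $Q=[\omega]P$, identifies $\bm{v}\cdot\bm{P}=[a+b\omega]P=[\alpha]P$ and $P+Q=[1+\omega]P$, and reads off the formula. Your additional norm argument (that $[z]P\neq O$ for every nonzero $z\in\Z[\omega]$ when $P$ is non-torsion, via $[\Nm(z)]P=[\bar z][z]P$) is a non-degeneracy check the paper leaves implicit, and it is a worthwhile refinement rather than a departure from the paper's route.
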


\textcolor{red}{To prove the explicit formula, recall that the formula from \cite{Cheon} requires the point $[n]P$ to have bad reduction, therefore it does not have a formula for $g_{r, \nu}(P)$ and requires one to first compute $\mu\coloneqq \min\left(2\nu(\psi_{r}), \nu(\phi_{r})\right)$. We have a similar situation here as well: recall that elliptic nets of rank $\geq 2$ requires minimum $3$ different basis vectors to define all terms: $\bm{e_{i}, e_{j}}$ and $\bm{e_{i+j}}$. Therefore, we also do not have control in the valuations of the net polynomials at $[r]P, [r\omega]P$ and $[r+r\omega]P$, which is why we need the following proposition. }

\begin{prop}\label{ann}
Let \( E/K \) be an elliptic curve with complex multiplication by \( \mathbb{Z}[\omega] \). Let \( \bm{P} = (P, \omega P) \), where \( P \in E(K) \setminus E_0(K) \) is a non-torsion point. Let $r=r_1+r_2\omega \in \textcolor{red}{\A(\p, P)}$ and \( \alpha \in \mathbb{Z}[\omega] \), so that \( \alpha r \in  \textcolor{red}{\A(\p, P)} \). Write \( \alpha r = X + Y\omega \), where \( X, Y \in \mathbb{Z} \). Then,
\begin{multline*}
2X^{2}\widetilde{\lambda}(P) + 2Y^{2}\widetilde{\lambda}(\omega P) 
- 2XY\big(\widetilde{\lambda}(P) + \widetilde{\lambda}(\omega P)- \widetilde{\lambda}([1+\omega]P)\big)  \\
= -\min\left(2\nu\big(\Psi_{(X, Y)}(\bm{P})\big), 
\nu\big(\Phi_{(X, Y)}(\bm{P})\big)\right).
\end{multline*}
\end{prop}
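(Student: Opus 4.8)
The plan is to apply the functional equation of \Cref{functional eq} to the single element $\alpha r = X + Y\omega \in \Z[\omega]$, and then collapse the left-hand side of the statement into one local height value which we can evaluate using the non-singular reduction of $[\alpha r]P$. Writing $c \coloneqq \widetilde{\lambda}([1+\omega]P) - \widetilde{\lambda}(P) - \widetilde{\lambda}(\omega P)$, \Cref{functional eq} gives
\[
\widetilde{\lambda}([\alpha r]P) = X^{2}\widetilde{\lambda}(P) + Y^{2}\widetilde{\lambda}(\omega P) + XY\,c + \nu\bigl(\Psi_{(X, Y)}(\bm{P})\bigr).
\]
Since the coefficient of the mixed term in the proposition is $-2\bigl(\widetilde{\lambda}(P) + \widetilde{\lambda}(\omega P) - \widetilde{\lambda}([1+\omega]P)\bigr) = 2c$, the entire left-hand side equals $2\bigl(X^{2}\widetilde{\lambda}(P) + Y^{2}\widetilde{\lambda}(\omega P) + XY\,c\bigr)$. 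Substituting the displayed identity, this reduces to
\[
2\bigl(\widetilde{\lambda}([\alpha r]P) - \nu(\Psi_{(X, Y)}(\bm{P}))\bigr),
\]
so the whole problem collapses to computing the single quantity $\widetilde{\lambda}([\alpha r]P)$.

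Next I would exploit the hypothesis $\alpha r \in \Ann(P)$, which by the definition of the annihilator in the $\Z[\omega]$-module $E(K)/E_{0}(K)$ means $[\alpha r]P \in E_{0}(K)$, i.e.\ $[\alpha r]P$ is non-singular modulo $\nu$. For such a point I claim the normalised local height is given by the good-reduction formula of \Cref{formula}, namely $\widetilde{\lambda}([\alpha r]P) = \tfrac{1}{2}\max\bigl(0, -\nu(x([\alpha r]P))\bigr)$. The net-polynomial description $\bm{v} \cdot \bm{P} = \bigl(\Phi_{\bm{v}}(\bm{P})/\Psi_{\bm{v}}^{2}(\bm{P}),\, \cdot\bigr)$ then yields $\nu(x([\alpha r]P)) = \nu(\Phi_{(X, Y)}(\bm{P})) - 2\nu(\Psi_{(X, Y)}(\bm{P}))$. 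Substituting this and using the elementary identity $\max(0, s) - t = \max(-t,\, s - t)$ with $t = 2\nu(\Psi_{(X, Y)}(\bm{P}))$ and $s = -\nu(x([\alpha r]P))$, the expression $2\widetilde{\lambda}([\alpha r]P) - 2\nu(\Psi_{(X, Y)}(\bm{P}))$ becomes
\[
\max\bigl(-2\nu(\Psi_{(X, Y)}(\bm{P})),\, -\nu(\Phi_{(X, Y)}(\bm{P}))\bigr) = -\min\bigl(2\nu(\Psi_{(X, Y)}(\bm{P})),\, \nu(\Phi_{(X, Y)}(\bm{P}))\bigr),
\]
which is exactly the right-hand side.

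The main obstacle is justifying the good-reduction formula for $\widetilde{\lambda}([\alpha r]P)$ used in the previous paragraph: although $[\alpha r]P$ is non-singular modulo $\nu$, the curve $E$ itself may have additive reduction at $\p$, so \Cref{formula} does not apply verbatim over $K$. Here I would invoke the potential good reduction of $E$ (guaranteed by the integrality of the $j$-invariant in the CM setting) and pass to a finite extension $L/K$, with $w \mid \nu$, over which $E$ acquires good reduction; the local height is invariant under this extension by \Cref{extend valuation}. The delicate point is that the good model over $L$ is reached through the change of coordinates (\ref{chg of coord}), so one must track how $x([\alpha r]P)$ transforms and verify, using \Cref{reduction lemma} together with the description $E_{0}(K) = E(K) \cap E_{i_{0}}(L)$, that the value of $\widetilde{\lambda}([\alpha r]P)$ computed over $L$ agrees with $\tfrac{1}{2}\max\bigl(0, -\nu(x([\alpha r]P))\bigr)$ expressed in the original $K$-coordinates. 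Once this identification is secured, the computation of the second paragraph closes the proof.
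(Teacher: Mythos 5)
Your first two paragraphs are exactly the paper's proof. The paper likewise applies \Cref{functional eq} to $(X, Y)$, uses $[\alpha r]P \in E_0(K)$ to invoke the good-reduction formula $\widetilde{\lambda}([\alpha r]P)=\tfrac{1}{2}\max\left(\nu\left(x([\alpha r]P)^{-1}\right), 0\right)$, and substitutes $\nu\left(x([\alpha r]P)\right)=\nu\left(\Phi_{(X, Y)}(\bm{P})\right)-2\nu\left(\Psi_{(X, Y)}(\bm{P})\right)$; the only cosmetic difference is that the paper splits into the two cases $[\alpha r]P \in E_1(K)$ and $[\alpha r]P \in E_0(K)\setminus E_1(K)$, where you merge them with the identity $\max(0, s)-t=\max(-t, s-t)$, which is tidier.

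The concern driving your third paragraph, however, is unfounded, and the proposed detour through potential good reduction is unnecessary. \Cref{formula} (Silverman's Theorem VI.4.1) is stated for any Weierstrass equation with $u$-integral coefficients and any point whose reduction is nonsingular with respect to that equation; it does not require the curve itself to have good reduction at $\p$. Since $\alpha r \in \Ann(P)$ means precisely that $[\alpha r]P \in E_0(K)$, i.e.\ $[\alpha r]P$ is nonsingular modulo $\nu$ for the given integral model, the formula applies verbatim over $K$, and this is exactly how the paper's proof proceeds --- no base change, no appeal to \Cref{reduction lemma}. (The machinery of \Cref{reduction lemma} and the extension $L/K$ is needed elsewhere, e.g.\ in \Cref{finite values}, where one compares heights of points with \emph{singular} reduction, but not here.) If you delete the third paragraph and simply cite \Cref{formula} for the point $[\alpha r]P \in E_0(K)$, your argument is complete and coincides with the paper's.
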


\begin{proof}
First notice that $[\alpha r]P$ is non-singular $\Mod \p$, so we always have
\[
-\min\left(2\nu\left(\Psi_{(X, Y)}(\bm{P})\right), \nu\left(\Phi_{(X, Y)}(\bm{P})\right)\right) =
\begin{cases}
    -2v(\Psi_{(X, Y)}(\bm{P})) & \text{if } [\alpha r]P \in E_0(K) \setminus E_1(K); \\
    -v(\Phi_{(X, Y)}(\bm{P})) & \text{if } [\alpha r]P \in E_1(K).
\end{cases}
\]
By Proposition \ref{functional eq}, we have 
\[
\widetilde{\lambda}([\alpha r]P) = X^{2}\widetilde{\lambda}(P) +Y^{2}\widetilde{\lambda}(\omega P) + XY\left(\widetilde{\lambda}([1+\omega]P) - \widetilde{\lambda}(P) - \widetilde{\lambda}(\omega P)\right) + \nu(\Psi_{(X, Y)}(\bm{P})).
\]

Recall that for all $P \in E_{0}(K)$, we have $\widetilde{\lambda}(P)=\frac{1}{2}\max\left(\nu\left(x(P)^{-1}\right), 0\right)$ and we always have $\nu\left(x([\alpha r]P)\right)=\nu\left(\Phi_{(X, Y)}(\bm{P})\right)-2\nu\left(\Psi_{(X, Y)}(\bm{P})\right)$.

If $[\alpha r]P \in E_{1}(K)$, then 
\[
\widetilde{\lambda}([\alpha r]P)=-\frac{1}{2}\nu\left(x([\alpha r]P)\right), \,\,\text{and}
\]
\[
2X\widetilde{\lambda}(P)+2Y\widetilde{\lambda}(\omega P)+2XY\left(\widetilde{\lambda}([1+\omega]P)-\widetilde{\lambda}(P)-\widetilde{\lambda}(\omega P)\right)=-\nu\left(\Phi_{(X, Y)}(\bm{P})\right).
\]
For \([\alpha r]P \in E_{0}(K) \setminus E_{1}(K)\), note that we must have \(\widetilde{\lambda}([\alpha r]P) = 0\). Therefore,
\[
2X\widetilde{\lambda}(P)+2Y\widetilde{\lambda}(\omega P)+2XY\left(\widetilde{\lambda}([1+\omega]P)-\widetilde{\lambda}(P)-\widetilde{\lambda}(\omega P)\right)= -2\nu\left(\Psi_{(X, Y)}(\bm{P})\right).
\]
\qedhere
\end{proof}
\textcolor{red}{Regarding the valuations of the net polynomials at $[r]P, [r\omega]P$ and $[r+r\omega]P$, we now }set the following notations:
\begin{align*}
\mu(r) & \coloneqq \min(2\nu(\psi_{r}(P)), \nu(\phi_{r}(P)))=-2\left(\widetilde{\lambda}\left([r]P\right)-\nu\left(\psi_{r}\right)\right), \\
\mu_{\omega}(r) & \coloneqq \min(2\nu(\psi_{r\omega}(P)), \nu(\phi_{r\omega}(P)))=-2\left(\widetilde{\lambda}\left([r\omega]P\right)-\nu\left(\psi_{r\omega}\right)\right), \\
\mu_{1+\omega}(r) & \coloneqq \min(2\nu(\psi_{r(1+\omega)}(P)), \nu(\phi_{r(1+\omega)}(P)))=-2\left(\widetilde{\lambda}\left([r(1+\omega)]P\right)-\nu\left(\psi_{r(1+\omega)}\right)\right),
\end{align*}
where $P$ is a singular point $\Mod \p$ in \(E(K)\) and the alternative expression using the normalised local height function follows from \Cref{functional eq} and \Cref{ann} together.  
\begin{remark}
As in \Cref{r dependency}, we would like to remind readers that the values of \(\mu(r)\), \(\mu_{\omega}(r)\), and \(\mu_{1+\omega}(r)\) depend on the choice of $r \in \textcolor{red}{\A(\p, P)}$. Even when $\textcolor{red}{\A(\p, P)}$ is principal, they depend on the choice of the generator: the Gaussian integers \(\Z[i]\) and the Eisenstein integers \(\Z[\rho]\) have units $\{\pm1, \pm i\}$ and $\{1, \rho, \rho^2\}$ respectively, where \(\rho\) is a primitive cube root of unity. In the latter case, we have \(\mu(r) = \mu(\rho r)=\mu(\rho^2 r)\) (similar for $\mu_{\omega}(r) = \mu_{1+\omega}(r)$) because \(\rho\) satisfies \(\rho^2 = -\rho - 1\). While for \(\Z[i]\), we have \(\mu(\pm i r) = \mu_{\omega}(\mp r)\), but $\mu_{1+\omega}(ir)=g_{-1+i, \nu}(P)$ instead.
\end{remark}

\subsection{The finiteness of local height function values}
In \cite{Panda}, part of the calculation relies on the fact that \( \tilde{\lambda} \left([n]P\right) \) takes values in a finite set when \( n \) is not a multiple of the order of \( P \) in the finite group \( E(L)/E_0(L) \). This leads to some cancellation of terms and results in a nice formula. Now we would like to show that this result still holds in our case.
Now with \Cref{reduction lemma}, we can imitate \cite{Panda} and show that even when the elliptic curve has CM by $\Z[\omega]$, $\lambda([\alpha]P)$ (hence $\widetilde{\lambda}([\alpha]P)$) only takes a finite set of values when $\alpha \not\in \textcolor{red}{\A(\p, P)}$. 

\begin{prop}\cite[Lemma 25]{Panda}\label{finite values}
Let $E/K$ be an elliptic curve with complex multiplication by $\Z[\omega]$. For a non-torsion point $P$ in $E(K)\setminus E_{0}(K)$, \textcolor{red}{let $\A(\p, P)$ be the annihilator of $P$ in the $\Z[\omega]$-module $E(K)/E_{0}(K)$ (as defined in \Cref{ANN}).} Let $\nu$ be a finite place in $K$ such that $E$ has additive reduction and $P$ is singular $\Mod \nu$. \textcolor{red}{Assume $r \in \A(\p, P)$}. Then for any $\alpha=a+b\omega \notin\textcolor{red}{\A(\p, P)}$, we have 
\[
\lambda\left([\alpha]P\right)=\lambda\left([r\pm\alpha]P\right).
\]
\end{prop}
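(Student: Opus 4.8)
The plan is to show that the singular point $P$ becomes a point in $E_{i_0}(L)$ after the change of coordinates of \Cref{reduction lemma}, and then to exploit the $\mathbb{Z}[\omega]$-module structure of the filtration so that adding a multiple $[r]P$ (which reduces to the identity modulo $\p$) leaves the local height unchanged. Concretely, I would first invoke \Cref{reduction lemma} to conclude that $P \in E_0(K) = E(K) \cap E_{i_0}(L)$ fails, i.e. that $P$ being singular $\bmod\,\nu$ means $x_K \equiv \gamma \bmod w$, so $w(x_L) > -2i_0$; together with the $\mathbb{Z}[\omega]$-module filtration from \Cref{Streng module} this places $[r]P$ into $E_0(K)$ for $r \in \Ann(P)$, so that $[r]P$ reduces to a nonsingular point. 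The key point is that whether $[\alpha]P$ and $[r\pm\alpha]P$ lie in the same coset of $E_0(K)$ governs their reduction behaviour, and since $r \in \Ann(P)$ the classes of $[\alpha]P$ and $[r\pm\alpha]P$ in $E(K)/E_0(K)$ coincide.

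Next I would use the good-reduction formula for the local height, \Cref{formula}, which after passing to $L$ (where $E$ has good reduction, by potential good reduction and the invariance of $\lambda$ under field extension, item (3) of \Cref{local height def}) reads
\[
\widetilde{\lambda}([\alpha]P) = \tfrac{1}{2}\max\!\big(w(x_L([\alpha]P)^{-1}),\,0\big).
\]
The claim $\lambda([\alpha]P) = \lambda([r\pm\alpha]P)$ then reduces to showing that the two $x$-coordinates have the same $w$-valuation behaviour relative to the filtration, i.e. that translating by $[r]P$ does not change which subgroup $E_n(L)$ the point lands in, nor the leading $w$-adic term of the $x$-coordinate when the point is in $E_1(L)$. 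Since $[r]P \in E_0(K) \subseteq E_{i_0}(L)$ reduces to a nonsingular point and $[\alpha]P \notin \Ann(P)$ is singular $\bmod\,\nu$ (hence not in $E_0(K)$), the sum $[r\pm\alpha]P$ has the same reduction type as $[\alpha]P$; the finiteness of the value set follows because both points land outside $E_0(L)/E_{i_0}$-classes and the relevant $x$-coordinate valuations are controlled by the finite group $E(L)/E_0(L)$.

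The main obstacle, I expect, is the careful bookkeeping of valuations under the non-minimal change of coordinates \eqref{chg of coord}: because $w$ is not normalised and $w(u) \geq 1$, the relation $x_K = u^2 x_L + \gamma$ mixes the $K$-level and $L$-level valuations, and one must track precisely how $w(x_L([\alpha]P))$ compares between $[\alpha]P$ and $[r\pm\alpha]P$. The cleanest route is to argue at the level of cosets: $[\alpha]P$ and $[r\pm\alpha]P$ represent the same element of the finite quotient $E(L)/E_{i_0}(L)$ (equivalently of $E(K)/E_0(K)$, using that $r$ annihilates $P$), and the local height restricted to a fixed nonidentity coset is constant because on such a coset $w(x_L)$ is bounded and the point stays a fixed $w$-adic distance from $O$. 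I would therefore prove that $\widetilde{\lambda}$ is constant on each nonidentity coset of $E_{i_0}(L)$ in $E(L)$, which immediately yields $\widetilde{\lambda}([\alpha]P) = \widetilde{\lambda}([r\pm\alpha]P)$ and hence the unnormalised statement after adding back the constant $\tfrac{1}{12}w(\Delta)$.
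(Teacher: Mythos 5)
Your proposal is correct and follows essentially the same route as the paper: pass to the extension $L$ where $E$ has good reduction, use \Cref{reduction lemma} and the filtration (\ref{w filtration}) to see that $[r]P$ lies deeper in the filtration than $[\alpha]P$ (your ``constancy of $\widetilde{\lambda}$ on nonidentity cosets of $E_{i_0}(L)$'' is exactly the paper's comparison of exact levels $i>j$), then apply the good-reduction formula of \Cref{formula} and descend by invariance of $\lambda$ under field extension. The only slip is your appeal to ``the finite group $E(L)/E_0(L)$'' for finiteness of the value set --- this quotient is trivial since $E/L$ has good reduction; the finiteness and the equality both come from the coset/level argument, so this remark is immaterial to the proof.
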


\begin{proof}
By the Semi-stable reduction theorem (see \cite[Chapter VII.5, Proposition 5.4, page 197]{Silverman1}), there exists a field extension $L/K$ such that $E$ has good reduction and a finite place $w$ in $L$ that extends $\nu$. From (\ref{chg of coord}), we know the new $x$-coordinate is $x_{L}=\frac{x_{K}-\gamma}{u^2}$. Now the point $[r]P$ is non-singular $\Mod \nu$, so by \Cref{reduction lemma} we have $x_{K}([r]P)\not\equiv \gamma \Mod w$ and $w\left(x_{L}([r]P)\right)=-2w(u)<0$. However, since $[\alpha]P$ is singular $\Mod \nu$, the lemma tells us that $w\left(x_{L}([\alpha]P)\right)>-2w(u)=w\left(x_{L}([r]P)\right)$. We know there is an $\Z[\omega]$-module filtration as in (\ref{w filtration}), which this tells us that if $[r]P \in E_{i}(L_{w})$ and $[\alpha]P \in E_{j}(L_{w})$, then we have $i>j$. Therefore, $[\alpha+r]P \in E_{j}(L_{w})$. Recall that the points $[r]P$ and $[r\pm \alpha]P$ have good reduction in $E(L)$, so the formula in \Cref{formula} applies and tell us that 
\[
\lambda_{w}\left([r\pm \alpha]P\right)=\lambda_{w}\left([\pm\alpha]P\right)=\lambda_{w}\left([\alpha]P\right).
\]
Since the local height function is invariant under field extension, we also get 
\[
\lambda_{\nu}\left([r\pm \alpha]P\right)=\lambda_{\nu}\left([\alpha]P\right).
\]
\qedhere
\end{proof}

\section{Proof of \Cref{CM gcd}}
\subsection{Proof of \Cref{CM gcd}(1): points with good reduction}
For points with good reduction, we can use the result from \cite[Proposition 1.7]{Akbary}. The difference here is that if the order $\Z[\omega]$ is not a principal ideal domain, then, writing $I_K$ as the group of non-zero fractional ideals of $K$ and using the notations in \Cref{net poly}\ref{notation}, we define the quadratic form $F(\bm{P}) \colon \Z^{r} \rightarrow I_K$ by
\begin{equation}\label{F}
F_{\bm{v}}(\bm{P})=\prod_{1\leq i \leq j \leq r}A_{ij}^{v_{i}v_{j}},
\end{equation}
where
\begin{equation}
A_{ii}=D_{\bm{e_i} \cdot \bm{P}}=D_{P_{i}}, \quad \text{and} \quad 
A_{ij}=(D_{P_{i}+P_{j}})(D_{P_{i}})^{-1}(D_{P_{j}})^{-1} \quad \text{for} \quad i \neq j,
\end{equation}
and $D_{P}$ is defined by $x(P)\OO_{K}=A_{P}D_{P}^{-2}$ as in \Cref{expression}. Then for $\bm{v} \in \Z^{r}$, we can define the ideal
\[
\widehat{\Psi}_{\bm{v}}(\bm{P})=F_{\bm{v}}(\bm{P})\Psi_{\bm{v}}(\bm{P})\OO_{K}
\]
that satisfies
\begin{align*}
\widehat{\Psi}_{\bm{e_i}}(\bm{P})&=D_{\bm{e_i}\cdot \bm{P}}, \\
\widehat{\Psi}_{\bm{e_i+e_j}}(\bm{P})&=D_{(\bm{e_i}+\bm{e_j})\cdot \bm{P}}.
\end{align*}
If $K$ has class number 1, then the ideals $A_{ij}$ are principal with $A_{ij}=(\alpha_{ij})$. In this case, we can define $F(\bm{P})$ with the $\alpha$'s, and then we get an elliptic net that is scale equivalent to $\Psi_{\bm{v}}(\bm{P})$ as in \cite[Proposition 1.7]{Akbary}. If not, we can still get a similar result, except we are now comparing the exponents of prime ideals in the ideals' unique factorisations rather than valuations of elements. 

\begin{prop}\label{strong}
Let $K$ be a number field and $E/K$ be an elliptic curve defined in Weierstrass equation with coefficients in $\OO_{K}$. Let $\bm{P}=(P_{1}, \dotsc, P_{r}) \in E(K)^{n}$ be an $n$-tuple consisting of $n$ linearly independent points in $E(K)$. Let $\p$ be a prime of $K$ such that $P_{i} \Mod \p$ is non singular for $1 \leq i \leq n$. Then for all $\bm{v} \in \Z^{r}$, 
\[
d_{\p}(D_{\bm{v} \cdot \bm{P}})=d_{\p}\left(\widehat{\Psi}_{\bm{v}}(\bm{P})\right),
\]
where $d_{\p}$ is as defined in \Cref{Valuations}. In particular, if for all primes $\p$ and all $1 \leq i \leq r$ we have that $P_{i} \Mod \p$ is non-singular, then 
\[
D_{\bm{v} \cdot \bm{P}}=\widehat{\Psi}_{\bm{v}}(\bm{P}).
\]
If $K$ has class number $1$, then $\widehat{\Psi}_{\bm{v}}\left(\bm{P}\right)$ is an elliptic net that is scale equivalent to $\Psi_{\bm{v}}\left(\bm{P}\right)$.
\end{prop}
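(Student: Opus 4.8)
The plan is to reduce the claimed identity to a prime-by-prime statement and to bridge the two sides through the normalised local height function. The central observation is that for any point $R \in E_0(K)$ with non-singular reduction at $\p$, the identity-component formula (the fact already invoked in the proof of \Cref{ann}) gives
\[
\widetilde{\lambda}(R) = \tfrac{1}{2}\max\{\nu(x(R)^{-1}), 0\}.
\]
Writing $x(R)\OO_K = A_R D_R^{-2}$ with $A_R, D_R$ coprime as in \Cref{expression}, coprimality forces $\min\{d_{\p}(A_R), d_{\p}(D_R)\} = 0$, so a short case-check on the sign of $\nu(x(R)) = d_{\p}(A_R) - 2d_{\p}(D_R)$ yields the clean bridge
\[
\widetilde{\lambda}(R) = d_{\p}(D_R).
\]
Since the non-singular locus $E_0(K)$ is a subgroup, the hypothesis that every $P_i$ is non-singular at $\p$ guarantees that each $P_i + P_j$ and every combination $\bm{v}\cdot\bm{P}$ lies in $E_0(K)$, so this bridge applies to all of the points appearing below.

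Next I would record the rank-$r$ analogue of the height decomposition in \Cref{functional eq}, namely
\[
\widetilde{\lambda}(\bm{v}\cdot\bm{P}) = \sum_{i} v_i^2\,\widetilde{\lambda}(P_i) + \sum_{i<j} v_i v_j\big(\widetilde{\lambda}(P_i+P_j) - \widetilde{\lambda}(P_i) - \widetilde{\lambda}(P_j)\big) + \nu\big(\Psi_{\bm{v}}(\bm{P})\big).
\]
This is proved by exactly the induction used for the rank-$2$ case, applying the quasi-parallelogram law together with the net-polynomial identity \ref{notation} of \Cref{net poly} to rewrite the cross terms; linear independence of the $P_i$ ensures $\bm{v}\cdot\bm{P}\neq O$ and $\Psi_{\bm{v}}(\bm{P})\neq 0$, so all valuations are defined. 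Substituting the bridge $\widetilde{\lambda} = d_{\p}(D)$ into this decomposition and comparing with the definitions $A_{ii} = D_{P_i}$ and $A_{ij} = (D_{P_i+P_j})(D_{P_i})^{-1}(D_{P_j})^{-1}$, I would read off
\[
d_{\p}(A_{ii}) = \widetilde{\lambda}(P_i), \qquad d_{\p}(A_{ij}) = \widetilde{\lambda}(P_i+P_j) - \widetilde{\lambda}(P_i) - \widetilde{\lambda}(P_j),
\]
so that the entire quadratic part of the decomposition is precisely $d_{\p}(F_{\bm{v}}(\bm{P}))$, where $F_{\bm{v}}(\bm{P}) = \prod_{i\leq j} A_{ij}^{v_iv_j}$ as in (\ref{F}) and $d_{\p}$ (as in \Cref{Valuations}) is additive on products of fractional ideals.

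Combining these, I would obtain $d_{\p}(D_{\bm{v}\cdot\bm{P}}) = \widetilde{\lambda}(\bm{v}\cdot\bm{P}) = d_{\p}(F_{\bm{v}}(\bm{P})) + \nu(\Psi_{\bm{v}}(\bm{P})) = d_{\p}(\widehat{\Psi}_{\bm{v}}(\bm{P}))$, since $\widehat{\Psi}_{\bm{v}}(\bm{P}) = F_{\bm{v}}(\bm{P})\,\Psi_{\bm{v}}(\bm{P})\OO_K$. This settles the first claim. For the \emph{in particular} statement, if every $P_i$ is non-singular at every prime, the displayed equality of $d_{\p}$-values holds for all $\p$; since a fractional ideal is determined by its exponents at all primes, the ideals $D_{\bm{v}\cdot\bm{P}}$ and $\widehat{\Psi}_{\bm{v}}(\bm{P})$ coincide. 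Finally, when $K$ has class number $1$ each $A_{ij} = (\alpha_{ij})$ is principal, so $F$ may be taken element-valued and $\widehat{\Psi}_{\bm{v}}(\bm{P})$ is generated by $F_{\bm{v}}(\bm{P})\Psi_{\bm{v}}(\bm{P})$; the scale-equivalence result for $W^F$ (cf. \cite[Proposition 6.1]{Stange}, \cite[Proposition 1.7]{Akbary}) then shows this generator is an elliptic net scale equivalent to $\Psi_{\bm{v}}(\bm{P})$.

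I expect the main obstacle to be justifying the bridge $\widetilde{\lambda}(R) = d_{\p}(D_R)$ uniformly over all reduction types. \Cref{formula} supplies the formula $\widetilde{\lambda}(R)=\tfrac12\max\{-\nu(x(R)),0\}$ on $E_0(K)$ only in the good-reduction case, whereas the proposition permits $\p$ to be a prime of bad reduction at which the $P_i$ merely have non-singular reduction. One therefore needs the general fact — already used in the proof of \Cref{ann} — that this formula describes the Néron function on the whole identity component, which is obtained at additive primes via a finite extension acquiring good reduction together with the invariance of $\lambda$ (part (3) of \Cref{local height def}), and at multiplicative primes via the Tate-curve computation in the style of \cite[Lemma 25]{Panda}. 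Care is also needed that the coordinate change (\ref{chg of coord}) at additive primes does not disturb the coprimality of $A_R$ and $D_R$ underpinning the bridge. Once the bridge is secured, the rank-$r$ induction and the coefficient matching are routine generalisations of the rank-$2$ arguments already present in the text.
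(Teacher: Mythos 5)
Your proof is correct, and its skeleton matches the paper's: both arguments rest on (i) the bridge $\widetilde{\lambda}(R)=d_{\p}(D_{R})$ for $R \in E_{0}(K)$, applicable to every point in play because $E_{0}(K)$ is a subgroup, and (ii) the fact that $\widetilde{\lambda}(\bm{v}\cdot\bm{P})-\nu\left(\Psi_{\bm{v}}(\bm{P})\right)$ is a quadratic form in $\bm{v}$. Where you genuinely differ is in how (ii) is established and exploited. The paper never proves a rank-$r$ height decomposition: it cites \cite[Lemma 2.11]{Akbary} for the statement that $\varepsilon(\bm{v})=\lambda(\bm{v}\cdot\bm{P})-\tfrac{1}{12}\nu(\Delta)-d_{\p}\left(\Psi_{\bm{v}}(\bm{P})\OO_{K}\right)$ is a quadratic form, subtracts the quadratic form $d_{\p}\left(F_{\bm{v}}(\bm{P})\right)$, checks that the difference $\widehat{\varepsilon}$ vanishes at every $\bm{e}_i$ and $\bm{e}_i+\bm{e}_j$, and then invokes the abstract vanishing criterion \cite[Lemma 4.5]{Stange} to conclude $\widehat{\varepsilon}\equiv 0$. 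You instead prove the explicit rank-$r$ analogue of \Cref{functional eq} by induction and identify its quadratic part with $d_{\p}\left(F_{\bm{v}}(\bm{P})\right)$ by matching coefficients through the bridge; this is equivalent content (a quadratic form is determined by its values at $\bm{e}_i$ and $\bm{e}_i+\bm{e}_j$), packaged as an explicit identity rather than an abstract vanishing argument. Your route buys self-containedness — it replaces the two citations by an induction the paper itself carries out in rank $2$ — at the cost of the multi-coordinate bookkeeping of that induction, which you assert rather than write out. Finally, both proofs share the same delicate point, which you flag more honestly than the paper does: \Cref{formula} is stated under a good-reduction hypothesis, whereas the proposition needs $\widetilde{\lambda}(R)=\tfrac{1}{2}\max\left(\nu\left(x(R)^{-1}\right), 0\right)$ for points of non-singular reduction at possibly bad primes; the paper simply asserts this version inside the proof, while you indicate how it is obtained (extension to good reduction plus invariance of $\lambda$, or Tate curves at multiplicative primes), which is the right repair.
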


\begin{proof}
The proof mostly follows from \cite[Proposition 1.7]{Akbary}. One can verify, or refer back to \cite[Lemma 2.11]{Akbary} that the function $\varepsilon: \Z^{r} \rightarrow \Z$, defined by
\begin{equation} \label{epsilon}
\varepsilon(\bm{v})= \begin{cases}
\lambda(\bm{v} \cdot \bm{P})-\frac{1}{12}\nu(\Delta)-d_{\p}(\Psi_{\bm{v}}(\bm{P})\OO_{K}), & \text{if $\bm{v}\neq 0$,} \\
0, & \text{otherwise}.
\end{cases}
\end{equation}
is a quadratic form, where $\lambda$ is the N\'{e}ron height function associated to the prime $\p$. 
Since $\nu$ is normalised, we know $\nu(\alpha)=d_{\p}(\alpha \OO_{K})$ for $\alpha \in K$. Recall from (\ref{good reduction local height}) that if $P \Mod \p$ is non-singular, then we have 
\[
\lambda(P)=\max\left(-\frac{1}{2}d_{\p}(x(P)\OO_{K}), 0\right)+\frac{1}{12}\nu(\Delta).
\]
Observe that in this case we have 
$d_{\p}(D_{P}) = \max\left(-\tfrac{1}{2} d_{\p}(x(P)\OO_K), 0\right)$,
so for $\bm{v} \in \Z^{r} \setminus \{\bm{0}\}$,
we can rewrite the quadratic form $\varepsilon(\bm{v})$ as

\begin{align*}
\varepsilon(\bm{v})
&=\lambda(\bm{v} \cdot \bm{P})-\frac{1}{12}\nu(\Delta)-d_{\p}(\Psi_{\bm{v}}(\bm{P})\OO_{K}) \\
&=d_{\p}(D_{\bm{v} \cdot \bm{P}})-d_{\p}\left(\Psi_{\bm{v}}(\bm{P})\OO_{K}\right).
\end{align*}

Now we further define another quadratic form
\begin{align*}
\widehat{\varepsilon}: \Z^{r} \rightarrow \Z, \quad \widehat{\varepsilon}(\bm{v}) 
&= \varepsilon(\bm{v}) - d_{\p}(F_{\bm{v}}(\bm{P})) \\
&= d_{\p}(D_{\bm{v} \cdot \bm{P}}) - d_{\p}(\widehat{\Psi}_{\bm{v}}(\bm{P})),
\end{align*} 

where we remind readers that $F_{\bm{v}}(\bm{P})$ is as defined in (\ref{F}). 

The difference between two quadratic forms is also a quadratic form, hence $\hat{\varepsilon}$ is a quadratic form.  Note that for any $1 \leq i < j \leq n$, $\widehat{\varepsilon}$ satisfies
\begin{align*}
\widehat{\varepsilon}(\bm{e_{i}}) 
&= d_{\p}(D_{P_{i}}) - d_{\p}(\widehat{\Psi}_{\bm{e_{i}}}(\bm{P}))=0 \\
\widehat{\varepsilon}(\bm{e_{i}+e_{j}}) 
&= d_{\p}(D_{P_{i}+P_{j}}) - d_{\p}(\widehat{\Psi}_{\bm{e_{i}+e_{j}}}(\bm{P}))=0.
\end{align*}

Note that if a quadratic form $G \colon \Z^r \rightarrow I_{K}$ satisfies $G(\bm{v})=0$ for all $\bm{v}=\bm{e_{i}}$ and $\bm{v}=\bm{e_{i}+e_{j}}$, then G($\bm{v})=0$ for all $\bm{v}$ (see \cite[Lemma 4.5]{Stange}), so $\widehat{\varepsilon}(\bm{v})=0$ for all $\bm{v} \in \Z^{n}$, which we conclude using the definition of $\widehat{\varepsilon}$ that for all $\bm{v} \in \Z^{r}$,
\[
 d_{\p}(D_{\bm{v} \cdot \bm{P}})=d_{\p}(\widehat{\Psi}_{\bm{v}}(\bm{P})).
\]
\qedhere
\end{proof}

\begin{remark}
We would like to point out that in \cite[Theorem 1.7]{Akbary}, this theorem was only stated for $K=\Q$. However, the main ingredient of the proof is to use the local N\'{e}ron height function to construct quadratic forms in a way that measures the difference of the valuations of elliptic denominator net and net polynomial, which is not specific to any number field. Therefore, this theorem is true for general number fields $K$. In particular, it is true for imaginary quadratic fields. 
\end{remark}
\subsection{Proof of \Cref{CM gcd}(2): multiples of $P$ with good reduction}
\textcolor{red}{In this section, we assume $P \in E(K)$ has singular reduction $\Mod \p$.} The main tool for this case is the `change of basis' formula for elliptic nets by Stange in \cite[Section 5]{Stange3} (proof can be found in \cite[Proposition 4.3]{Stange}\textcolor{red}{)}, which is the following: let \( T \) be any \( n \times m \) matrix. Let \( \mathbf{P}=(P_1, P_2, \dotsc, P_{m}) \in E(K)^m \), \( \mathbf{v}=(v_1, v_2, \dotsc, v_{n}) \in \mathbb{Z}^n \). For an elliptic net $W_{E, \bm{P}} \colon \Z^m \rightarrow K$ associated to the curve $E/L$ and the points $P_1, P_2, \dotsc, P_{m}$, it satisfies
\[
W_{E, \bm{P}}(T^{\text{tr}}(\bm{v})) = W_{E, T(\bm{P})}(\bm{v}) \prod_{i=1}^n W_{E, \bm{P}}(T^{\text{tr}}(e_i))^{v_i^2 - v_i \left( \sum_{j \neq i} v_j \right)} \prod_{1 \leq i < j \leq n} W_{E, \bm{P}}(T^{\text{tr}}(e_i + e_j))^{v_i v_j},
\]
where $T^{\text{tr}}$ denotes the transpose of $T$. Now let \( \alpha = a + b\omega \), \( \beta = c + d\omega \in \mathbb{Z}[\omega] \). From \Cref{order structure}, we know that \( \omega^2 = A\omega - D \), where \( A = \{0, f\} \), with \( f \) being the conductor of \( \Z[\omega] \) and \( D \) a positive integer.
We write \( \bm{\alpha} \) and \( \bm{\beta} \) for their vector notations. We have
\[
\alpha \beta = (ac - bdD) + (ad + bc + bdA)\omega.
\]
If we take \( \bm{v} = \bm{\beta} \) in the formula above, the transpose of the matrix is

\[
T^{\text{tr}}=\begin{pmatrix}
  a & -bD\\ 
  b & a+bA
\end{pmatrix}
\]
and one can check that indeed, 
\[
T^{\text{tr}}\bm{\beta}=\begin{pmatrix}
  ac-bdD \\ 
bc+ad+bdA
	\end{pmatrix},
\]
which is the vector notation of $\alpha\beta$. On the other hand, for $\bm{P}=\begin{pmatrix}
P \\ 
\omega P
	\end{pmatrix} \in E(K)^2$, we have 
\[
T\bm{P}=\begin{pmatrix}
 aP+b\omega P \\ 
-bdP+(a+bA)\omega P
	\end{pmatrix}=
\begin{pmatrix}
 [\alpha]P \\ 
[\alpha\omega] P
	\end{pmatrix}.
\]
So overall, using net polynomial notations, we have 
\[
\Psi_{T^{\text{Tr}}\bm{\beta}}(\bm{P})=\Psi_{\beta}(T\bm{P})\Psi_{T^{\text{Tr}}(1, 0)^{\text{Tr}}}(\bm{P})^{c^2-cd}\Psi_{T^{\text{Tr}}(0, 1)^{\text{Tr}}}(\bm{P})^{d^2-cd}\Psi_{T^{\text{Tr}}(1, 1)^{\text{Tr}}}(\bm{P})^{cd}
\]
Using our notation instead, then this is equivalent to saying our `CM divsion polynomial' satisfies
\begin{equation}\label{EDS formula}
\psi_{\alpha \beta}(P)=\psi_{\beta}(\alpha P)\psi_{\alpha}(P)^{c^{2}-cd}\psi_{\alpha\omega}(P)^{d^{2}-cd}\psi_{\alpha(1+\omega)}(P)^{cd}.
\end{equation}
This means in this case, the matrix $T^{\text{tr}}$ represents the `multiplication by $[\alpha]$ map'. Note that $\End(E)$ is a commutative ring, so we can always interchange $\alpha$ and $\beta$ in (\ref{EDS formula}), although this would mean the matrix $T$ is now different. Nevertheless, now this allows us to easily compute the valuation of $\psi_{z}$ in terms of $\mu, \mu_{\omega}$ and $\mu_{1+\omega}$. 

\begin{prop}\label{good}
Let \textcolor{red}{$r$ be an element in $\A(\p, P)$}, where $r=r_1+r_2\omega$. Let $\alpha = a+ b\omega$ with $\alpha \in \Z[\omega]$, then
\[
\Min\left(2\nu\left(\psi_{\alpha r}(P)\right), \nu\left(\phi_{\alpha r}(P)\right)\right)=(a^{2}-ab)\mu(r)+(b^{2}-ab)\mu_{\omega}(r)+(ab)\mu_{1+\omega}(r).
\]
\end{prop}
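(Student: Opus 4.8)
The plan is to reduce the whole statement to the Néron local height of the point $[\alpha r]P$ and then exploit a cancellation between the local-height functional equation and the change-of-basis formula (\ref{EDS formula}).

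First I would record that every point occurring in the statement has non-singular reduction. Since $r\in\Ann(P)$ and $E(K)/E_{0}(K)$ is a $\Z[\omega]$-module, the annihilator $\Ann(P)$ is an ideal of $\Z[\omega]$; hence $r,\,r\omega,\,r(1+\omega)$ and $\alpha r$ all lie in $\Ann(P)$, so $[r]P,[r\omega]P,[r(1+\omega)]P,[\alpha r]P\in E_{0}(K)$. For any $z$ with $[z]P$ non-singular modulo $\nu$, combining \Cref{functional eq} with \Cref{ann} (exactly as in the alternative expressions for $\mu(r),\mu_{\omega}(r),\mu_{1+\omega}(r)$ recorded above) gives
\[
g_{z,\nu}(P)=-2\widetilde{\lambda}([z]P)+2\nu(\psi_{z}(P)).
\]
In particular this identity holds for $z=\alpha r$, and its three special cases $z=r,\,r\omega,\,r(1+\omega)$ yield $\mu(r)=-2\widetilde{\lambda}([r]P)+2\nu(\psi_{r})$, $\mu_{\omega}(r)=-2\widetilde{\lambda}([r\omega]P)+2\nu(\psi_{r\omega})$ and $\mu_{1+\omega}(r)=-2\widetilde{\lambda}([r(1+\omega)]P)+2\nu(\psi_{r(1+\omega)})$.

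Next I would expand the two terms on the right-hand side of the $z=\alpha r$ identity separately. For the height I apply the functional equation not at $P$ but at the base point $[r]P$ (legitimate since the underlying \Cref{functional eq} imposes no condition on the chosen points), whose CM tuple is $([r]P,\omega[r]P)=([r]P,[r\omega]P)$ and which satisfies $[\alpha]([r]P)=[\alpha r]P$. Writing $\alpha=a+b\omega$, this gives
\[
\widetilde{\lambda}([\alpha r]P)=a^{2}\widetilde{\lambda}([r]P)+b^{2}\widetilde{\lambda}([r\omega]P)+ab\big(\widetilde{\lambda}([r(1+\omega)]P)-\widetilde{\lambda}([r]P)-\widetilde{\lambda}([r\omega]P)\big)+\nu\big(\Psi_{(a,b)}([r]P,[r\omega]P)\big).
\]
For the net polynomial I use (\ref{EDS formula}) with $r$ in the role of the multiplier (valid since $\End(E)$ is commutative), obtaining
\[
\psi_{\alpha r}(P)=\Psi_{(a,b)}([r]P,[r\omega]P)\,\psi_{r}(P)^{a^{2}-ab}\,\psi_{r\omega}(P)^{b^{2}-ab}\,\psi_{r(1+\omega)}(P)^{ab}.
\]
The crucial point is that the remainder term $\nu(\Psi_{(a,b)}([r]P,[r\omega]P))$ appearing in the height expansion is precisely the valuation of the leading factor of the change-of-basis expression, so upon substituting both expansions into $g_{\alpha r,\nu}(P)=-2\widetilde{\lambda}([\alpha r]P)+2\nu(\psi_{\alpha r})$ these two contributions cancel.

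After cancellation I would collect the surviving terms by the monomials $a^{2}$, $b^{2}$ and $ab$, and invoke the three $\mu$-identities above; the $a^{2}$-coefficient becomes $\mu(r)$, the $b^{2}$-coefficient becomes $\mu_{\omega}(r)$, and the $ab$-coefficient reorganises into $\mu_{1+\omega}(r)-\mu(r)-\mu_{\omega}(r)$, giving
\[
g_{\alpha r,\nu}(P)=a^{2}\mu(r)+b^{2}\mu_{\omega}(r)+ab\big(\mu_{1+\omega}(r)-\mu(r)-\mu_{\omega}(r)\big),
\]
which rearranges to the claimed $(a^{2}-ab)\mu(r)+(b^{2}-ab)\mu_{\omega}(r)+(ab)\mu_{1+\omega}(r)$. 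I expect the main obstacle to be bookkeeping rather than conceptual: one must apply the functional equation at the correct base point $[r]P$ so that its net-polynomial remainder matches the leading factor of (\ref{EDS formula}), and then carefully track the $ab$-coefficient, which receives contributions both from the mixed term of the height expansion and from the off-diagonal exponents of the change-of-basis formula and only collapses to $\mu_{1+\omega}(r)-\mu(r)-\mu_{\omega}(r)$ once the $\mu$-identities are substituted. A minor technical check is that the net polynomials are defined at the tuple $([r]P,[r\omega]P)$, i.e. that these are distinct non-zero points, which holds because $P$ is non-torsion and $\omega$ acts non-trivially.
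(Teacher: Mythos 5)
Your proof is correct, and it reaches the formula by a genuinely different and more economical route than the paper. The paper's own proof also uses the two ingredients you identify, namely the change-of-basis identity (\ref{EDS formula}) with $r$ as multiplier and the functional equation \Cref{functional eq} applied at the base tuple $([r]P,[r\omega]P)$, but it then converts heights into valuations by hand: it splits into five cases according to which of $[r]P$, $[r\omega]P$, $[r(1+\omega)]P$ and $[\alpha r]P$ reduce to $O$ modulo $\p$, evaluates the heights in each case with the good-reduction formula (\ref{good reduction local height}), and in each case determines which of $2\nu(\psi_{\alpha r})$ and $\nu(\phi_{\alpha r})$ attains the minimum. You bypass that entire case analysis by observing that \Cref{ann}, applied with multipliers $1$, $\omega$, $1+\omega$ and $\alpha$, packages those case distinctions once and for all into the uniform identity $g_{z,\nu}(P)=-2\widetilde{\lambda}([z]P)+2\nu\left(\psi_{z}(P)\right)$ for every $z\in\Ann(P)$; these are precisely the ``alternative expressions'' for $\mu(r)$, $\mu_{\omega}(r)$, $\mu_{1+\omega}(r)$ that the paper itself records immediately after \Cref{ann}. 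Once that is in place, the cancellation of $\nu\bigl(\Psi_{(a,b)}([r]P,[r\omega]P)\bigr)$ between the height expansion and the change-of-basis expansion, and the regrouping of the surviving terms by $a^{2}$, $b^{2}$, $ab$, is pure algebra, and your bookkeeping is right: the $ab$-coefficient $\mu_{1+\omega}(r)-\mu(r)-\mu_{\omega}(r)$ recombines with the diagonal terms to give $(a^{2}-ab)\mu(r)+(b^{2}-ab)\mu_{\omega}(r)+ab\,\mu_{1+\omega}(r)$. What your route buys is brevity and robustness, with no min-tracking and no reduction-type casework to get wrong; what the paper's route buys is explicit visibility of the reduction behaviour in each configuration, which is illustrative but not needed for the statement. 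Your closing technical check, that $[r]P$ and $[r\omega]P$ are distinct nonzero points so the net polynomial at this tuple is defined, is the correct one and is the same implicit hypothesis the paper's proof relies on.
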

\begin{proof} 
The main idea of the proof is as follows: \textcolor{red}{we first use the local height formula from \Cref{functional eq} to express $\nu\left(\psi_{\alpha}(rP)\right)$ in terms of $\widetilde{\lambda}\left([\alpha]rP\right)$, $\widetilde{\lambda}\left(rP\right)$, $\widetilde{\lambda}\left([\omega]rP\right)$ and $\widetilde{\lambda}\left([1+\omega]rP\right)$. Since $rP$ is a non-singular point $\bmod \nu$, we can use the local height formula for non-singular point in (\ref{good reduction local height}), which says
\[
\widetilde{\lambda}(rP) = \frac{1}{2} \max \left(\nu(x(rP)^{-1}), 0 \right), \text{ for all } rP \in E_0(K),
\]
to replace all the local height terms above so we can express $\nu\left(\psi_{\alpha}(rP)\right)$ in terms of the valuations of the division polynomials $\phi_{z}$ and $\psi_{z}$ instead. Finally, recall from  (\ref{EDS formula}) that we have 
\begin{equation}\label{EDS formula2}
\psi_{\alpha r}(P)=\psi_{\alpha}(rP)\psi_{r}(P)^{a^{2}-ab}\psi_{r\omega}(P)^{b^{2}-ab}\psi_{r(1+\omega)}(P)^{ab}.
\end{equation}
By substituting the expression we obtained for $\nu\left(\psi_{\alpha}(rP)\right)$ into (\ref{EDS formula2}) and carefully handle the cases of whether the points reduce to the identity modulo $\p$ or not, we yield the result.} Overall, we have $5$ different cases to examine:\\

\noindent\textbf{Case I:} $rP \equiv O \bmod \p$, so $r\omega P \equiv O \bmod \p$ as well. \\
\textcolor{red}{First, \Cref{functional eq} tells us that}
\begin{flalign*}
\nu\left(\psi_{\alpha}(rP)\right)
&= \widetilde{\lambda}\left([\alpha]rP\right) - a^{2}\widetilde{\lambda}(rP) - b^{2}\widetilde{\lambda}([\omega]rP) - ab\left(\widetilde{\lambda}([1+\omega]rP) - \widetilde{\lambda}(rP) - \widetilde{\lambda}([\omega]rP)\right) &&\\
&= \nu\left(\psi_{\alpha r}\right) - \frac{1}{2}\nu\left(\phi_{\alpha r}\right)
- a^{2}\left(\nu\left(\psi_{r}\right) - \frac{1}{2}\nu\left(\phi_{r}\right)\right)
- b^{2}\left(\nu\left(\psi_{\omega r}\right) - \frac{1}{2}\nu\left(\phi_{\omega r}\right)\right) &&\\
&\phantom{=} - ab\left(\nu\left(\psi_{(1+\omega)r}\right) - \frac{1}{2}\nu\left(\phi_{(1+\omega)r}\right)
- \left(\nu\left(\psi_{r}\right) - \frac{1}{2}\nu\left(\phi_{r}\right)\right)
- \left(\nu\left(\psi_{\omega r}\right) - \frac{1}{2}\nu\left(\phi_{\omega r}\right)\right)
\right) &&\\
&= \nu\left(\psi_{\alpha r}\right) - \frac{1}{2}\nu\left(\phi_{\alpha r}\right)
- (a^{2} - ab)\left(\nu\left(\psi_{r}\right) - \frac{1}{2}\nu\left(\phi_{r}\right)\right) &&\\
&\phantom{=} - (b^{2} - ab)\left(\nu\left(\psi_{\omega r}\right) - \frac{1}{2}\nu\left(\phi_{\omega r}\right)\right)
- ab\left(\nu\left(\psi_{(1+\omega)r}\right) - \frac{1}{2}\nu\left(\phi_{(1+\omega)r}\right)\right). &&
\end{flalign*}

Now substitute the expression of $\nu\left(\psi_{\alpha}(rP)\right)$ into \textcolor{red}{(\ref{EDS formula2})}, we have 
\[
\nu\left(\psi_{\alpha r}\right)=\nu\left(\psi_{\alpha r}\right)-\frac{1}{2}\nu\left(\phi_{\alpha r}\right)+\frac{a^{2}-ab}{2}\nu\left(\phi_{r}\right)+\frac{b^{2}-ab}{2}\nu\left(\phi_{r\omega}\right)+\frac{ab}{2}\nu\left(\textcolor{red}{\phi}_{r+r\omega}\right).
\]
Rearrange and multiply both sides by $2$, then we have 
\[
\nu\left(\phi_{\alpha r}\right)=(a^{2}-ab)\nu\left(\phi_{r}\right)+(b^{2}-ab)\nu\left(\phi_{r\omega}\right)+(ab)\nu\left(\textcolor{red}{\phi}_{r+r\omega}\right).
\]
Since $rP \equiv r\omega P \equiv r(1+\omega)P \equiv O \bmod \p$, we also have $\alpha rP\equiv O \bmod \p$ \textcolor{red}{and therefore the following holds:}
\[
2\nu(\psi_{r})\geq \nu(\phi_{r}), 2\nu(\psi_{r\omega})\geq \nu(\phi_{r\omega}),  2\nu(\psi_{r+r\omega})\geq \nu(\phi_{r+r\omega})\,\, \text{and} \,\,2\nu(\psi_{\alpha r})\geq \nu(\phi_{\alpha r}) .
\]
Overall, when $rP \equiv O \mod \p$,
\begin{flalign*}
&\Min\left(2\nu\left(\psi_{\alpha r}(P)\right), \nu\left(\phi_{\alpha r}(P)\right)\right) &&\\
&=\nu(\phi_{\alpha r}) &&\\
&=(a^{2}-ab)\mu(r)+(b^{2}-ab)\mu_{r\omega}(r)+(ab)\mu_{r+r\omega}(r).&&
\end{flalign*}

\noindent\textbf{Case II:} $rP, r\omega P, r(1+\omega)P$ and $\alpha rP \not\equiv O \bmod \p$ \\
We start by considering the term $\nu(\psi_{\alpha}(rP))$. Recall that if a point $P \in E_{0}(K)$ satisfies $P \not\equiv O \mod \p$, then $\widetilde{\lambda}(P) = 0$. Hence, \textcolor{red}{using} (\ref{good reduction local height}) and noting that we assumed $\alpha rP \not\equiv O \mod \p$, we have
\begin{flalign*}
\nu(\psi_{\alpha}(rP))
&= \widetilde{\lambda}([\alpha r]P) - a^{2}(0) - b^{2}(0) - ab\left(0-0-0\right) &&\\
&=\widetilde{\lambda}([\alpha r]P) &&\\
&=\frac{1}{2}\max\left(\nu\left(x(\alpha rP)^{-1}\right), 0\right) &&\\
&=0.&& 
\end{flalign*}
Then (\ref{EDS formula2}) tells us that 
\begin{flalign*}
&\indent\Min\left(2\nu\left(\psi_{\alpha r}(P)\right), \nu\left(\phi_{\alpha r}(P)\right)\right) &&\\
&=2\nu\left(\psi_{\alpha r}\right) &&\\
&=0+2(a^{2}-ab)\nu\left(\psi_{r}\right)+2(b^2-ab)\nu\left(\psi_{r\omega}\right)+(2ab)\nu\left(\psi_{r(1+\omega)}\right) &&\\
&=(a^{2}-ab)\mu(r)+(b^2-ab)\mu_{\omega}(r)+(ab)\mu_{1+\omega}(r),&&
\end{flalign*}
\textcolor{red}{where the last equality follows from the fact that 
\[
2\nu(\psi_{r})\leq \nu(\phi_{r}), 2\nu(\psi_{r\omega})\leq \nu(\phi_{r\omega}) \,\, \text{and}\,\,  2\nu(\psi_{r+r\omega})\leq \nu(\phi_{r+r\omega})
\]
under the assumption that none of the points reduces to the identity modulo $\p$. 
}

\noindent\textbf{Case III:} $rP, r\omega P, r(1+\omega)P \not\equiv O \bmod \p$ but $\alpha rP \equiv O \bmod \p$ \\
\textcolor{red}{As in Case II}, (\ref{good reduction local height}) tells us that 
\[
\nu(\psi_{\alpha}(rP))=\widetilde{\lambda}([\alpha r]P)=\frac{1}{2}\max\left(\nu\left(x(\alpha rP)^{-1}\right), 0\right)=\nu(\psi_{\alpha r})-\frac{1}{2}\nu(\phi_{\alpha r}). 
\]
Substitute this into (\ref{EDS formula2}) to give
\[
\nu\left(\psi_{\alpha r}\right)=\nu\left(\psi_{\alpha r}\right)-\frac{1}{2}\nu\left(\phi_{\alpha r}\right)\textcolor{red}{+(a^{2}-ab)\nu\left(\psi_{r}\right)+(b^{2}-ab)\nu\left(\psi_{r\omega}\right)+ab\nu\left(\psi_{r+r\omega}\right)}.
\]
Rearrange and multiply both sides by 2, we then get 
\[
\nu\left(\phi_{\alpha r}\right)=2(a^{2}-ab)\nu\left(\psi_{r}\right)+2(b^{2}-ab)\nu\left(\psi_{r\omega}\right)+(2ab)\nu\left(\psi_{r+r\omega}\right).
\]
This shows that 
\begin{flalign*}
&\indent\Min\left(2\nu\left(\psi_{\alpha r}(P)\right), \nu\left(\phi_{\alpha r}(P)\right)\right)&& \\
&=\nu\left(\phi_{\alpha r}\right) &&\\
&=\textcolor{red}{2(a^{2}-ab)\nu\left(\psi_{r}\right)+2(b^{2}-ab)\nu\left(\psi_{r\omega}\right)+(2ab)\nu\left(\psi_{r+r\omega}\right)} &&\\
&=(a^{2}-ab)\mu(r)+(b^2-ab)\mu_{\omega}(r)+(ab)\mu_{1+\omega}(r),&&
\end{flalign*}
\textcolor{red}{where the last equality follows from the assumption that
$2\nu(\psi_r)\leq \nu(\phi_r)$,
$2\nu(\psi_{r\omega})\leq \nu(\phi_{r\omega})$, and
$2\nu(\psi_{r+r\omega})\leq \nu(\phi_{r+r\omega})$.}

\noindent\textbf{Case IV:} $rP \not\equiv O \bmod \p$, but $r\omega P \equiv O \bmod \p$ and so $r(1+\omega)P \equiv rP \not\equiv O \bmod \p$ \\
Again, we start by considering $\nu(\psi_{\alpha}(rP))$. Using (\ref{good reduction local height}), we have 
\begin{flalign*}
\nu(\psi_{\alpha}(rP)) 
&=\widetilde{\lambda}([\alpha r]P)-0-b^{2}\widetilde{\lambda}(\omega rP)-ab\left(0-0-\widetilde{\lambda}(\omega rP)\right) &&\\
&=\widetilde{\lambda}([\alpha r]P)-(b^{2}-ab)\widetilde{\lambda}(\omega rP) &&\\
&=\widetilde{\lambda}([\alpha r]P)-(b^{2}-ab)\left(\nu(\psi_{\omega r})-\frac{1}{2}\nu(\phi_{\omega r})\right),&&
\end{flalign*}
where the last equality follows from the fact that $\textcolor{red}{r\omega P} \equiv O \bmod \p$. Now we want to substitute this back into (\ref{EDS formula2}), \textcolor{red}{which depends on the behaviour of $\alpha rP$ reduction modulo $\p$}.

\noindent\textbf{Case IV.1:} $\alpha rP \equiv O \bmod \p$ \\
Since $\alpha rP$ is a non-singular point $\bmod \p$, we have 
\[
\nu(\psi_{\alpha}(rP)) =\left(\nu(\psi_{\alpha r})-\frac{1}{2}\nu(\phi_{\alpha r})\right)-(b^{2}-ab)\left(\nu(\psi_{\omega r})-\frac{1}{2}\nu(\phi_{\omega r})\right).
\]
In this case we have 
\textcolor{red}{
\[
2\nu(\psi_{\alpha r}) \geq \nu(\phi_{\alpha r}), 
2\nu(\psi_{r})\leq \nu(\phi_{r}), 
2\nu(\psi_{r\omega})\geq \nu(\phi_{r\omega}),\text{ and }
2\nu(\psi_{r+r\omega})\leq \nu(\phi_{r+r\omega}),
\]
so if we substitute the expression for $\nu(\psi_{\alpha}(rP))$ back into (\ref{EDS formula2}) and rearrange as in Case I, we can conclude that 
}
\begin{flalign*}
&\indent\Min\left(2\nu\left(\psi_{\alpha r}(P)\right), \nu\left(\phi_{\alpha r}(P)\right)\right) &&\\
&=\nu(\phi_{\alpha r}) \\ 
&=2(a^{2}-ab)\nu(\textcolor{red}{\phi_{r}})+(b^{2}-ab)\nu(\phi_{\omega r})+2ab\nu(\phi_{r+r\omega})&&\\
&=(a^{2}-ab)\mu(r)+(b^{2}-ab)\mu_{\omega}(r)+ab\mu_{1+\omega}(r).&&
\end{flalign*}

\textbf{Case IV.2:} $\alpha rP \not\equiv O \bmod \p$ \\
Then as shown in Case III, we now have $\widetilde{\lambda}([\alpha r]P)=0$ \textcolor{red}{and the following inequalities hold:
\[
2\nu(\psi_{\alpha r}) \leq \nu(\phi_{\alpha r}), 
2\nu(\psi_{r})\leq \nu(\phi_{r}),
2\nu(\psi_{r\omega})\geq \nu(\phi_{r\omega}), { and }
2\nu(\psi_{r+r\omega})\leq \nu(\phi_{r+r\omega}).
\]
}
Therefore, substitute the expression for $\nu(\psi_{\alpha}(rP))$ back into (\ref{EDS formula2}) and conclude that 
\begin{flalign*}
&\indent\Min\left(2\nu\left(\psi_{\alpha r}(P)\right), \nu\left(\phi_{\alpha r}(P)\right)\right) &&\\
&=2\nu(\psi_{\alpha r}) &&\\
&=(a^{2}-ab)\nu(\textcolor{red}{\phi_{r}})+(b^{2}-ab)\nu(\phi_{\omega r})+2ab\nu(\phi_{r+r\omega})&&\\
&=(a^{2}-ab)\mu(r)+(b^{2}-ab)\mu_{\omega}(r)+ab\mu_{1+\omega}(r).&&
\end{flalign*}

\textbf{Case V:} $rP, r\omega P \not\equiv O \bmod \p$, but $r(1+\omega)P \equiv O \bmod \p$ \\
By the assumption, we have $\widetilde{\lambda}(rP) = \widetilde{\lambda}(r\omega P) = 0$. Together with (\ref{good reduction local height}) and (\ref{good reduction local height}), this gives 
\begin{flalign*}
\nu(\psi_{\alpha}(rP)) 
&=\widetilde{\lambda}([\alpha r]P)-ab\widetilde{\lambda}([r(1+\omega)]P) &&\\
&=\widetilde{\lambda}([\alpha r]P)-ab\left[\nu(\psi_{r+r\omega})-\frac{1}{2}\nu(\phi_{r+r\omega})\right].
\end{flalign*}
As in Case IV, the final result depends on the behaviour of $\alpha rP$ reduction modulo $\p$. \\

\textbf{Case V.1:} $\alpha rP \equiv O \bmod \p$ \\
Then we have 
\[
\nu(\psi_{\alpha}(rP)) =\left(\nu(\psi_{\alpha r})-\frac{1}{2}\nu(\phi_{\alpha r})\right)-ab\left(\nu(\psi_{r+r\omega})-\frac{1}{2}\nu(\phi_{r+r\omega})\right).
\]

Substitute this back into (\ref{EDS formula2}), rearrange and simplify to get 
\[
\nu\left(\phi_{\alpha r}\right)=2(a^{2}-ab)\nu\left(\psi_{r}\right)+2(b^{2}-ab)\nu\left(\psi_{r\omega}\right)+(ab)\nu\left(\phi_{r+r\omega}\right).
\]

Finally, recall from the assumption that we have the following inequalities:
\[
2\nu(\psi_{\alpha r}) \geq \nu(\psi_{\alpha r}), \quad
2\nu(\psi_{r})\leq \nu(\phi_{r}), \quad
2\nu(\psi_{r\omega})\leq \nu(\phi_{r\omega}), \quad \text{and} \quad
2\nu(\psi_{r+r\omega})\geq \nu(\phi_{r+r\omega}).
\]
Therefore, we conclude that
\begin{flalign*}
&\indent\Min\left(2\nu\left(\psi_{\alpha r}(P)\right), \nu\left(\phi_{\alpha r}(P)\right)\right) &&\\
&=\nu(\phi_{\alpha r}) \\ 
&=2(a^{2}-ab)\nu\left(\psi_{r}\right)+2(b^{2}-ab)\nu\left(\psi_{r\omega}\right)+(ab)\nu\left(\phi_{r+r\omega}\right)&&\\
&=(a^{2}-ab)\mu(r)+(b^{2}-ab)\mu_{\omega}(r)+ab\mu_{1+\omega}(r).&&
\end{flalign*}

\textbf{Case V.2:} $\alpha rP \not\equiv O \bmod \p$ \\
As in Case IV.2, we now have $\widetilde{\lambda}([\alpha r]P)=0$ and
\[
2\nu(\psi_{\alpha r}) \geq \nu(\phi_{\alpha r}), \quad
2\nu(\psi_{r})\leq \nu(\phi_{r}), \quad
2\nu(\psi_{r\omega})\geq \nu(\phi_{r\omega}), \quad \text{and} \quad
2\nu(\psi_{r+r\omega})\leq \nu(\phi_{r+r\omega}).
\]
Therefore, substitute the expression for $\nu(\psi_{\alpha}(rP))$ back into (\ref{EDS formula2}) and conclude that 
\begin{flalign*}
&\indent\Min\left(2\nu\left(\psi_{\alpha r}(P)\right), \nu\left(\phi_{\alpha r}(P)\right)\right) &&\\
&=2\nu(\psi_{\alpha r}) &&\\
&=2(a^{2}-ab)\nu(\psi_{r})+2(b^{2}-ab)\nu(\psi_{\omega r})+ab\nu(\phi_{r+r\omega})&&\\
&=(a^{2}-ab)\mu(r)+(b^{2}-ab)\mu_{\omega}(r)+ab\mu_{1+\omega}(r).&&
\end{flalign*}
\qedhere
\end{proof}

\subsection{Proof of \Cref{CM gcd}(2): multiples of $P$ with bad reduction}\label{Bad multiple}
\textcolor{red}{As in the previous section, we continue to assume $P \in E(K)$ has singular reduction $\Mod \p$.} In \cite{Cheon}, Cheon and Hahn utilised the recurrence relation (\ref{recurrence}) for the division polynomials \(\{\psi_{n}\}_{n \in \N}\) to derive an explicit formula \textcolor{red}{for the cancellation exponent}. They achieved this by computing the valuation of each term in (\ref{recurrence}) using the properties of division polynomials. In contrast, our approach involves repeatedly applying \Cref{functional eq} and empirically identifying which terms cancel each other out. 
\begin{lemma}\label{alphar}
Let $\alpha = a+b\omega \in \Z[\omega], r=r_1+r_2\omega \in \textcolor{red}{\A(\p, P)}$. Then 
\[
\nu\left(\psi_{\alpha r+1}\psi_{\alpha r-1}\right)=(a^{2}-ab)\mu(r)+(b^{2}-ab)\mu_{\omega}(r)+(ab)\mu_{1+\omega}(r).
\]
\end{lemma}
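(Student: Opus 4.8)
The plan is to reduce the computation of $\nu(\psi_{\alpha r+1}\psi_{\alpha r-1})$ to the quantity $\Min(2\nu(\psi_{\alpha r}), \nu(\phi_{\alpha r}))$, which \Cref{good} already evaluates as the desired right-hand side. The bridge between the two is the difference formula, property \ref{notation} in \Cref{net poly}. Writing $\alpha r = X + Y\omega$ so that $(X,Y)$ is its vector notation, I would take $\bm{v} = (X,Y)$ and $\bm{u} = \bm{e}_1$ in that formula. Since $\Psi_{\bm{e}_1} = 1$ and $\bm{e}_1 \cdot \bm{P} = P$, while $(X\pm 1, Y)$ is the vector notation of $\alpha r \pm 1$, this yields
\[
\psi_{\alpha r+1}\,\psi_{\alpha r-1} = -\,\psi_{\alpha r}^{2}\bigl(x([\alpha r]P) - x(P)\bigr).
\]
Taking $\nu$ of both sides (and using $\nu(-1)=0$) gives $\nu(\psi_{\alpha r+1}\psi_{\alpha r-1}) = 2\nu(\psi_{\alpha r}) + \nu\bigl(x([\alpha r]P) - x(P)\bigr)$, so everything comes down to pinning down $\nu(x([\alpha r]P) - x(P))$. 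Note that $\alpha r \in \Ann(P)$ is never a unit (as $P \notin E_0(K)$), so $\psi_{\alpha r \pm 1}$ is never $\psi_{\bm{0}}$, and the degenerate case $\alpha = 0$ makes both sides vanish.

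Here I would split into the two possible reduction behaviours of $[\alpha r]P$, which lies in $E_0(K)$ because $r \in \Ann(P)$ and $E_0(K)$ is a $\Z[\omega]$-submodule. If $[\alpha r]P \in E_1(K)$, i.e.\ it reduces to $O$, then $\nu(x([\alpha r]P)) < 0$ while $\nu(x(P)) \geq 0$ (as $P$ is singular), so the difference has valuation $\nu(x([\alpha r]P)) = \nu(\phi_{\alpha r}) - 2\nu(\psi_{\alpha r})$, and substituting gives $\nu(\psi_{\alpha r+1}\psi_{\alpha r-1}) = \nu(\phi_{\alpha r})$. In this regime $\nu(\phi_{\alpha r}) \le 2\nu(\psi_{\alpha r})$, so this equals $\Min(2\nu(\psi_{\alpha r}), \nu(\phi_{\alpha r}))$. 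If instead $[\alpha r]P \in E_0(K) \setminus E_1(K)$, both $x([\alpha r]P)$ and $x(P)$ are $\p$-integral, and I claim their reductions are distinct, so that $\nu(x([\alpha r]P) - x(P)) = 0$ and $\nu(\psi_{\alpha r+1}\psi_{\alpha r-1}) = 2\nu(\psi_{\alpha r}) = \Min(2\nu(\psi_{\alpha r}), \nu(\phi_{\alpha r}))$. In either case \Cref{good} then gives the stated expression.

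The main obstacle is the claim that the reductions of $[\alpha r]P$ and $P$ have distinct $x$-coordinates in the good-reduction subcase. The point is that $P$ reduces to the singular point of the reduced curve while $[\alpha r]P$ reduces to a smooth point, and on a Weierstrass model the fibre over the $x$-coordinate of the singular point contains only the singular point itself: the defining equation, viewed as a quadratic in $y$, has the $y$-coordinate of the singular point as a double root (equivalently, its $y$-discriminant vanishes there), a fact that holds in every residue characteristic, including $2$. Thus no smooth point can share the singular point's $x$-coordinate, forcing the two reductions to differ and hence $\nu(x([\alpha r]P) - x(P)) = 0$. I would isolate this reduction-theoretic observation, since it is the only non-formal input; the remainder is bookkeeping with valuations and an appeal to \Cref{good}.
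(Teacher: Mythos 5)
Your proof is correct, but it takes a genuinely different route from the paper's. The paper proves \Cref{alphar} by brute force inside the local-height formalism: it expands $\nu(\psi_{\alpha r+1})+\nu(\psi_{\alpha r-1})$ via \Cref{functional eq}, uses \Cref{finite values} to replace $\widetilde{\lambda}([\alpha r\pm 1]P)$ by $\widetilde{\lambda}(P)$, recognises $a^{2}\mu(r)+b^{2}\mu_{\omega}(r)$ through \Cref{ann}, and then evaluates the cross term by a second expansion of $\widetilde{\lambda}([r(1+\omega)]P)-\nu(\psi_{r(1+\omega)})$ --- all of this done twice, once for $\omega^{2}=-D$ and once for $\omega^{2}=f\omega-D$. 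You instead apply the difference formula (property (d) of \Cref{net poly}) with $\bm{u}=\bm{e}_1$ to get $\psi_{\alpha r+1}\psi_{\alpha r-1}=-\psi_{\alpha r}^{2}\bigl(x([\alpha r]P)-x(P)\bigr)$, show that $\nu\bigl(x([\alpha r]P)-x(P)\bigr)$ equals $\nu(x([\alpha r]P))$ or $0$ according as $[\alpha r]P$ lies in $E_{1}(K)$ or $E_{0}(K)\setminus E_{1}(K)$, conclude that in both cases the left-hand side equals $\Min(2\nu(\psi_{\alpha r}),\nu(\phi_{\alpha r}))$, and then quote \Cref{good}. The one non-formal input --- that a smooth point of the reduced curve cannot share the $x$-coordinate of the singular point --- is correct, and your justification is sound in every residue characteristic: the singularity condition $2y_{0}+a_{1}x_{0}+a_{3}=0$, combined with the root sum $y_{0}+y_{1}=-(a_{1}x_{0}+a_{3})$ of the fibre quadratic, forces $y_{1}=y_{0}$. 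There is also no circularity, since \Cref{good} is stated and proved before, and independently of, \Cref{alphar}. Your route is shorter, avoids the case split on the minimal polynomial of $\omega$, and explains conceptually why the answer coincides with the good-reduction value $g_{\alpha r,\nu}(P)$. What it does not produce are the intermediate local-height identities generated by the paper's expansion (notably the identification of the cross term with $\tfrac{1}{2}\left(\mu(r)+\mu_{\omega}(r)-\mu_{1+\omega}(r)\right)$), which the proof of \Cref{NotAnn} later reuses by matching terms against the proof of \Cref{alphar}; if your proof replaced the paper's, those identities would have to be re-derived inline there. The degenerate cases ($\alpha=0$, and $\alpha r\pm1\neq 0$ because $\Ann(P)$ contains no units when $P\notin E_{0}(K)$) are handled correctly in your write-up.
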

\begin{proof}
Recall that we have two cases to consider: for orders in quadratic imaginary fields, their generators could have two different behaviours: either $\omega^2=-D$, or $\omega^2=\omega -D$ for some square-free integer $D$. \\
\textbf{Case I:} $\omega^{2}=-D$ for some positive integer $D$.\\
First take note of the following:
\begin{itemize}
\item $\alpha r=(ar_1-br_2D)+(ar_2+br_1)\omega$,
\item$ r\omega = -r_2D+r_1\omega$,
\item $r(1+\omega)=(r_1-r_2D)+(r_1+r_2)\omega$.
\end{itemize}
Apply \Cref{functional eq} to the points $[\alpha r +1]P$ and $[\alpha r-1]P$ to get 
\begin{flalign*}
&\quad\nu\left(\psi_{\alpha r+1}\psi_{\alpha r-1}\right) &&\\
&=2\widetilde{\lambda}(P)-2\left((ar_1-br_2D)^2+1\right)\widetilde{\lambda}(P)-2(ar_2+br_1)^2\widetilde{\lambda}(\omega P) \\
&\quad-2(ar_1-br_2D)(ar_2+br_1)\left(\widetilde{\lambda}([1+\omega]P)-\widetilde{\lambda}(P)-\widetilde{\lambda}(\omega P)\right)&&\\
&=-2(a^2r_{1}^{2}-2abr_1r_2D+b^2r_2^2D^2)\widetilde{\lambda}(P)-2(a^2r_{2}^{2}+2abr_1r_2+b^2r_1^2)\widetilde{\lambda}(\omega P)&&\\
&\quad-2(a^2r_{1}r_{2}+abr_{1}^{2}-abr_{1}^{2}-abr_{2}^{2}D-b^{2}r_1r_2D)\left(\widetilde{\lambda}([1+\omega]P)-\widetilde{\lambda}(P)-\widetilde{\lambda}(\omega P)\right)&&\\
\intertext{Now rearrange the terms and we will see some familiar expressions:}
&=-2a^2r_1^2\widetilde{\lambda}(P)-2a^2r_2^2\widetilde{\lambda}(\omega P)-2ra^2_1r_2\left(\widetilde{\lambda}([1+\omega]P)-\widetilde{\lambda}(P)-\widetilde{\lambda}(\omega P)\right) &&\\
&\quad-2b^2r_2^2D^2\widetilde{\lambda}(P)-2b^2r_1^2\widetilde{\lambda}(\omega P)+2b^2r_1r_2D\left(\widetilde{\lambda}([1+\omega]P)-\widetilde{\lambda}(P)-\widetilde{\lambda}(\omega P)\right) &&\\
&\quad-2ab\left(-2r_1r_2D\widetilde{\lambda}(P)+2r_1r_2\widetilde{\lambda}(\omega P)+(r_1^2-r_2^2D)\left(\widetilde{\lambda}([1+\omega]P)-\widetilde{\lambda}(P)-\widetilde{\lambda}(\omega P)\right)\right).
\end{flalign*}

Note that applying \Cref{ann} to the points $[r]P$ and $[r \omega]P$ produces the terms in the first two lines of the equality sign above, which correspond to $\mu(P)$ and $\mu_{\omega}(P)$, respectively. Thus, we have
\begin{flalign*}
&\quad\nu\left(\psi_{\alpha r+1}\psi_{\alpha r-1}\right) &&\\
&=a^{2}\cdot\mu(r)+b^{2}\cdot \mu_{\omega}(r)&&\\
&\quad-2ab\left(-2r_1r_2D\widetilde{\lambda}(r)+2r_1r_2\widetilde{\lambda}(\omega P)+(r_1^2-r_2^2D)\left(\widetilde{\lambda}([1+\omega]P)-\widetilde{\lambda}(P)-\widetilde{\lambda}(\omega P)\right)\right).
\end{flalign*}

For the remaining terms, we once again use \Cref{functional eq} to find an expression for $\widetilde{\lambda}\left([r(1+\omega)]P\right)-\nu\left(\psi_{r(1+\omega)}\right)$:

\begin{flalign*}
&\quad \widetilde{\lambda}\left([r(1+\omega)]P\right) - \nu\left(\psi_{r(1+\omega)}\right) &&\\
&= (r_1 - r_2D)^2\widetilde{\lambda}(P) + (r_1 + r_2)^2\widetilde{\lambda}(\omega P) &&\\
&\quad + (r_1^2 + r_1r_2 - r_1r_2D - r_2^2D)\left(\widetilde{\lambda}([1+\omega]P) - \widetilde{\lambda}(P) - \widetilde{\lambda}(\omega P)\right) &&\\
&= (r_1^2 - 2r_1r_2D + r_2^2D^2)\widetilde{\lambda}(P) + (r_1^2 + 2r_1r_2 + r_2^2)\widetilde{\lambda}(\omega P) &&\\
&\quad + (r_1^2 + r_1r_2 - r_1r_2D - r_2^2D)\left(\widetilde{\lambda}([1+\omega]P) - \widetilde{\lambda}(P) - \widetilde{\lambda}(\omega P)\right). &&\\
\intertext{As before, by rearranging the terms and applying \Cref{ann} to the points \( [r]P \) and \( [r\omega]P \), we get, in order, \( -\frac{1}{2}\mu(P) \), \( \frac{1}{2}\mu_{\omega}(P) \), and the remaining terms:}
&= r_1^2\widetilde{\lambda}(P) + r_2^2\widetilde{\lambda}(\omega P) + r_1r_2\left(\widetilde{\lambda}([1+\omega]P) - \widetilde{\lambda}(P) - \widetilde{\lambda}(\omega P)\right) &&\\
&\quad + r_2^2D^2\widetilde{\lambda}(P) + r_1^2\widetilde{\lambda}(\omega P) - r_1r_2D\left(\widetilde{\lambda}([1+\omega]P) - \widetilde{\lambda}(P) - \widetilde{\lambda}(\omega P)\right) &&\\
&\quad - 2r_1r_2D\widetilde{\lambda}(P) + 2r_1r_2\widetilde{\lambda}(\omega P) + (r_1^2 - r_2^2D)\left(\widetilde{\lambda}([1+\omega]P) - \widetilde{\lambda}(P) - \widetilde{\lambda}(\omega P)\right) &&\\
&= -\frac{1}{2}\mu(P) - \frac{1}{2}\mu_{\omega}(P) &&\\
&\quad - 2r_1r_2D\widetilde{\lambda}(P) + 2r_1r_2\widetilde{\lambda}(\omega P) + (r_1^2 - r_2^2D)\left(\widetilde{\lambda}([1+\omega]P) - \widetilde{\lambda}(P) - \widetilde{\lambda}(\omega P)\right).
\end{flalign*}

\noindent Hence, an alternative expression for the remaining terms is $\frac{1}{2}\left(\mu(r) + \mu_{\omega}(r) - \mu_{1+\omega}(r)\right)$. Substituting this back into \( \nu\left(\psi_{\alpha r+1}\psi_{\alpha r-1}\right) \), we obtain
\begin{flalign*}
&\quad \nu\left(\psi_{\alpha r+1}\psi_{\alpha r-1}\right) &&\\
&= a^2\cdot\mu(P) + b^2\cdot \mu_{\omega}(P) - 2ab\left(\frac{1}{2}\left(\mu(P) + \mu_{\omega}(P) - \mu_{1+\omega}(P)\right)\right) &&\\
&= (a^2 - ab)\mu(r) + (b^2 - ab)\mu_{\omega}(r) + (ab)\mu_{1+\omega}(r).
\end{flalign*}

\textbf{Case II:} $\omega^2=f\omega -D$ for some positive integer $D$ \\
Again, we need to first take note of some expressions:
\begin{itemize}
\item $\alpha r=(ar_1-br_2D)+(ar_2+br_1+br_2f)\omega$,
\item$ r\omega = -r_2D+(r_1+r_2f)\omega$,
\item $r(1+\omega)=(r_1-r_2D)+(r_1+r_2+r_2f)\omega$.
\end{itemize}

The proof proceeds exactly as in Case I: first, use \Cref{functional eq} to expand $\nu\left(\psi_{\alpha r+1}\psi_{\alpha r-1}\right)$, then apply \Cref{ann} to identify $\mu(P)$ and $\mu_{\omega}(P)$. Finally, clear up the remaining terms by using \Cref{functional eq} on $\widetilde{\lambda}\left([r(1+\omega)]P\right)-\nu\left(\psi_{r(1+\omega)}\right)$. Since the strategy is the same, we will omit intermediate explanations for conciseness.
\begin{flalign*}
&\quad \widetilde{\lambda}\left([r(1+\omega)]P\right)-\nu\left(\psi_{r(1+\omega)}\right) &&\\
&=(r_1^2 - 2r_1r_2D + r_2^2D^2)\widetilde{\lambda}(P) + (f^2 r_2^2 + 2 f r_1 r_2 + 2 f r_2^2 + r_1^2 + 2 r_1 r_2 + r_2^2)\widetilde{\lambda}(\omega P) && \\ 
&\quad + (-D f r_2^2 - D r_1 r_2 - D r_2^2 + f r_1 r_2 + r_1^2 + r_1 r_2)\left(\widetilde{\lambda}([1+\omega]P) - \widetilde{\lambda}(P) - \widetilde{\lambda}(\omega P)\right) && \\ 
&= -\frac{1}{2}\mu(r) -\frac{1}{2}\mu_{\omega}(r)-2r_1r_2D\widetilde{\lambda}(P) + (2r_1r_2 + 2fr_2^2)\widetilde{\lambda}(\omega P) &&\\ 
&\quad  + (r_1^2 +fr_1r_2 - r_2^2D)\left(\widetilde{\lambda}([1+\omega]P) - \widetilde{\lambda}(P) - \widetilde{\lambda}(\omega P)\right).
\end{flalign*}
Therefore, we get 
\begin{flalign*}
&\quad \nu\left(\psi_{\alpha r+1}\psi_{\alpha r-1}\right) &&\\
&=-2(a^2r_{1}^{2}-2abr_1r_2D+b^2r_2^2D^2)\widetilde{\lambda}(P) &&\\
&\quad - 2(a^2 r_2^2 + 2 a b f r_2^2 + 2 a b r_1 r_2 + b^2 f^2 r_2^2 + 2 b^2 f r_1 r_2 + b^2 r_1^2)\widetilde{\lambda}(\omega P) &&\\
&\quad -2(a^2 r_1 r_2 - a b D r_2^2 + a b f r_1 r_2 + a b r_1^2 - b^2 D f r_2^2 - b^2 D r_1 r_2)\left(\widetilde{\lambda}([1+\omega]P)-\widetilde{\lambda}(P)-\widetilde{\lambda}(\omega P)\right) &&\\
&=a^2\cdot \mu(r) + b^2 \cdot \mu_{\omega}(r) &&\\
&\quad - 2ab\left(-2r_1r_2D\widetilde{\lambda}(P) + (2r_1r_2+2fr_2^2)\widetilde{\lambda}(\omega P) + (r_1^2+fr_1r_2-r_2^2D)\left(\widetilde{\lambda}([1+\omega]P)-\widetilde{\lambda}(P)-\widetilde{\lambda}(\omega P)\right)\right) &&\\
&=a^2\cdot \mu(r) + b^2 \cdot \mu_{\omega}(r) - 2ab\left(\frac{1}{2}\left(\mu(r)+\mu_{\omega}(r)-\mu_{1+\omega}(r)\right)\right) &&\\
&=(a^{2}-ab)\mu(r) + (b^{2}-ab)\mu_{\omega}(r) + (ab)\mu_{1+\omega}(r),
\end{flalign*}
as required, which completes the proof.
\qedhere
\end{proof}
In fact, this is the only term that needs to be evaluated before obtaining the main result. The main difficulty, once again, lies in determining the additional terms that must be included in the original formula by Cheon and Hahn.
\begin{prop}\label{NotAnn}
Let $\alpha = a+b\omega \in \Z[\omega], r=r_1+r_2\omega\in \textcolor{red}{\A(\p, P)}$. Further let $\beta=c+d\omega$ such that $\beta \not\in \textcolor{red}{\A(\p, P)}$. Then 
\begin{flalign*}
&\Min\left(2\nu\left(\psi_{\alpha r\pm\beta}(P)\right), \nu\left(\phi_{\alpha r\pm\beta}(P)\right)\right) &&\\
&=2\nu\left(\psi_{\alpha r \textcolor{red}{\pm} \beta}\right) &&\\
&=2 \nu\left(\psi_{\beta}\right) \pm 2a\left(\nu\left(\frac{\psi_{\beta}}{\psi_{r - \beta}}\right) + \frac{\mu}{2}\right)\pm2b\left(\nu\left(\frac{\psi_{\beta}}{\psi_{r \omega - \beta}}\right)+\frac{\mu_{\omega}}{2}\right) &&\\
& \quad+(a^{2} - ab) \mu(r) + (b^{2} - ab) \mu_{\omega}(r) + (ab)\mu_{1+\omega}(r).
\end{flalign*}
\end{prop}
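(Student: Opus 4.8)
The plan is to reduce everything to the single quantity $2\nu(\psi_{\alpha r\pm\beta})$ and then to treat the normalised local height as a quadratic form. Since $\Ann(P)$ is an ideal of $\Z[\omega]$, we have $\alpha r\in\Ann(P)$ while $\beta\notin\Ann(P)$, so $\alpha r\pm\beta\notin\Ann(P)$; hence $[\alpha r\pm\beta]P$ is singular modulo $\p$ and $\nu(x([\alpha r\pm\beta]P))\ge 0$. Because $\nu(x([z]P))=\nu(\phi_z)-2\nu(\psi_z)$, this forces $2\nu(\psi_{\alpha r\pm\beta})\le\nu(\phi_{\alpha r\pm\beta})$, giving the first equality $\Min\{2\nu(\psi_{\alpha r\pm\beta}),\nu(\phi_{\alpha r\pm\beta})\}=2\nu(\psi_{\alpha r\pm\beta})$. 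It therefore suffices to compute $\nu(\psi_{\alpha r+\beta})$; the case $\alpha r-\beta$ will follow by replacing $\beta$ with $-\beta$ and using that $\psi$ is odd.

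Following the proof of \Cref{strong}, set $\mathcal{Q}(z):=\widetilde{\lambda}([z]P)-\nu(\psi_z)$ for $z\in\Z[\omega]$, which is exactly the quadratic form $\varepsilon$ of that proposition, and let $B(z,w)=\mathcal{Q}(z+w)-\mathcal{Q}(z)-\mathcal{Q}(w)$ be the associated bilinear form. Expanding the quadratic form gives
\[
\nu(\psi_{\alpha r+\beta})=\widetilde{\lambda}([\alpha r+\beta]P)-\mathcal{Q}(\beta)-\mathcal{Q}(\alpha r)-B(\alpha r,\beta),
\]
and I would treat the three correction terms in turn. Iterating \Cref{finite values} along the steps $\beta\mapsto\beta+r\mapsto\cdots$ and $\beta\mapsto\beta+\omega r\mapsto\cdots$ (valid since $r,\omega r\in\Ann(P)$ and every intermediate point stays outside $\Ann(P)$) yields $\widetilde{\lambda}([\alpha r+\beta]P)=\widetilde{\lambda}([\beta]P)$, so $\widetilde{\lambda}([\alpha r+\beta]P)-\mathcal{Q}(\beta)=\nu(\psi_\beta)$. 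For the second term, \Cref{ann} (equivalently \Cref{good}) gives $2\mathcal{Q}(\alpha r)=-\big[(a^2-ab)\mu+(b^2-ab)\mu_\omega+ab\mu_{1+\omega}\big]$, which produces the quadratic part of the formula.

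The heart of the argument is the cross term. Writing $\alpha r=a\,r+b\,(\omega r)$ in $\Z[\omega]$ and using bilinearity, $B(\alpha r,\beta)=a\,B(r,\beta)+b\,B(\omega r,\beta)$, so it remains to evaluate $B(r,\beta)$ and $B(\omega r,\beta)$. Using $\mathcal{Q}(r)=-\tfrac12\mu$ (the $\alpha=1$ case of \Cref{ann}) and $\mathcal{Q}(r+\beta)=\widetilde{\lambda}([\beta]P)-\nu(\psi_{r+\beta})$ from \Cref{finite values}, one first obtains $B(r,\beta)=\nu(\psi_\beta)-\nu(\psi_{r+\beta})+\tfrac12\mu$. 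To rewrite this in terms of $\psi_{r-\beta}$ as the statement demands, apply the product identity of \Cref{net poly} with $\bm{v}=r,\bm{u}=\beta$ together with the quasi-parallelogram law (\ref{tt}) at $[r]P,[\beta]P$; since \Cref{finite values} gives $\widetilde{\lambda}([r\pm\beta]P)=\widetilde{\lambda}([\beta]P)$, the $x$-difference evaluates to $\nu(x([r]P)-x([\beta]P))=-2\widetilde{\lambda}([r]P)$, whence
\[
\nu(\psi_{r+\beta})+\nu(\psi_{r-\beta})=2\nu(\psi_\beta)+\mu.
\]
Substituting this eliminates $\psi_{r+\beta}$ in favour of $\psi_{r-\beta}$ and gives $B(r,\beta)=-\big[\nu(\psi_\beta/\psi_{r-\beta})+\tfrac{\mu}{2}\big]$, and the identical computation with $\omega r$ in place of $r$ gives $B(\omega r,\beta)=-\big[\nu(\psi_\beta/\psi_{r\omega-\beta})+\tfrac{\mu_\omega}{2}\big]$. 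Assembling the three pieces yields the formula with the upper signs, and the displayed relation converts the $\alpha r-\beta$ computation into the lower signs.

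I expect the main obstacle to be precisely this conversion in the cross term: a direct expansion naturally produces $\psi_{r+\beta}$ and $\psi_{r\omega+\beta}$, whereas the Cheon--Hahn shape of the formula requires $\psi_{r-\beta}$ and $\psi_{r\omega-\beta}$. Reconciling the two via the symmetric net-polynomial identity, while simultaneously verifying that the signs remain consistent across the $\alpha r+\beta$ and $\alpha r-\beta$ cases, is where the finite-values cancellation of \Cref{finite values} and the product relation must be combined with care.
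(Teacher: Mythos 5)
Your proof is correct, and it takes a genuinely different --- and noticeably leaner --- route than the paper's. The paper proves \Cref{NotAnn} by brute-force coordinate computation: it splits into the two cases $\omega^{2}=-D$ and $\omega^{2}=f\omega-D$, expands every local height via \Cref{functional eq} in the coordinates $a,b,c,d,r_{1},r_{2}$, matches the resulting expressions term by term against $\mu(r)$, $\mu_{\omega}(r)$, $\mu_{1+\omega}(r)$, invokes the separately proven \Cref{alphar} for the quadratic part, and then repeats the entire computation for $\alpha r-\beta$. Your argument packages all of this structurally: since \Cref{functional eq} shows that $\mathcal{Q}(z)=\widetilde{\lambda}([z]P)-\nu(\psi_{z})$ is a quadratic form in the coordinates of $z$, the decomposition $\nu(\psi_{\alpha r+\beta})=\widetilde{\lambda}([\alpha r+\beta]P)-\mathcal{Q}(\beta)-\mathcal{Q}(\alpha r)-B(\alpha r,\beta)$ together with bilinearity, $B(\alpha r,\beta)=a\,B(r,\beta)+b\,B(\omega r,\beta)$, renders the case distinction on the minimal polynomial of $\omega$ unnecessary. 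Replacing \Cref{alphar} by the combination of \Cref{ann} and \Cref{good} (giving $2\mathcal{Q}(\alpha r)=-\left[(a^{2}-ab)\mu+(b^{2}-ab)\mu_{\omega}+ab\,\mu_{1+\omega}\right]$) is legitimate, since both are established before this proposition. Finally, your identity $\nu(\psi_{r+\beta})+\nu(\psi_{r-\beta})=2\nu(\psi_{\beta})+\mu$, obtained from the product identity in \Cref{net poly}, the quasi-parallelogram law, and \Cref{finite values}, does double duty: it converts the naturally appearing $\psi_{r+\beta}$ into the $\psi_{r-\beta}$ demanded by the statement, and it shows that substituting $-\beta$ for $\beta$ flips exactly the linear terms, so both signs follow from a single computation --- the paper instead redoes the expansion for $\alpha r-\beta$ from scratch. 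Two cosmetic points: the quadratic-form property of $\mathcal{Q}$ is best cited directly from \Cref{functional eq} rather than from \Cref{strong}, whose $\varepsilon$ appears in the good-reduction context; and your iterated use of \Cref{finite values} (stepping by $r$ and $\omega r$) carries the same implicit hypotheses as the paper's own single-step uses of that lemma, so no additional assumptions are introduced. What the paper's approach buys is line-by-line verifiability by explicit expansion; what yours buys is a uniform, coordinate-free argument that exposes why the Cheon--Hahn shape of the formula emerges.
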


\begin{proof}
Since the point $[\alpha r +\beta]P$ is singular $\Mod \p$, we know $[\alpha r +\beta]P \not\equiv O \Mod \p$ and therefore
\[
\Min\left(2\nu\left(\psi_{\alpha r+\beta}(P)\right), \nu\left(\phi_{\alpha r+\beta}(P)\right)\right)=2\nu\left(\psi_{\alpha r+\beta}(P)\right).
\]
As in \Cref{alphar}, we have two cases, although the strategies are identically the same.  \\
\textbf{Case I:} $\omega^{2}=-D$ for some positive integer $D$.\\
For convenience, take note of the following expansions:
\begin{itemize}
\begin{multicols}{2}
\item $\alpha r=(ar_1-br_2D)+(ar_2+br_1)\omega $
\item $r-\beta=(r_1-c)+(r_2-d)\omega$
\item $(r_1-c)(r_2-d)=r_1r_2-dr_1-cr_2+d$
\columnbreak
\item $r\omega=-r_2D+r_1\omega$
\item $r\omega - \beta =(-r_2D-c)+(r_1-d)\omega$
\item $(-r_2D-c)(r_1-d)=-r_1r_2D+r_2dD-cr_1+cd$ 
\end{multicols}
\end{itemize}
We first use \Cref{functional eq} to evaluate $a\left(\nu\left(\frac{\psi_{\beta}}{\psi_{r - \beta}}\right)\right)$ and $b\left(\nu\left(\frac{\psi_{\beta}}{\psi_{r\omega - \beta}}\right)\right)$.
\begin{flalign*}
&\quad a\left(\nu\left(\psi_{\beta}\right)-\nu\left(\psi_{r - \beta}\right)\right) &&\\
&= a\left( \tilde{\lambda}\left([\beta]P\right) - c^2\tilde{\lambda}(P) - d^2\tilde{\lambda}(\omega P) - cd\left[\widetilde{\lambda}([1+\omega]P) - \widetilde{\lambda}(P) - \widetilde{\lambda}(\omega P)\right] \right) &&\\
&\quad - a\left( \widetilde{\lambda}\left([r-\beta]P\right) - (r_1^2 + 2r_1c + c^2)\widetilde{\lambda}\left(P\right) - (r_2^2 - 2r_2d + d^2)\tilde{\lambda}\left(\omega P\right) \right) &&\\
&\quad - a\left((-r_1r_2 + dr_1 - cr_2 + cd)\left(\widetilde{\lambda}([1+\omega]P) - \widetilde{\lambda}(P) - \widetilde{\lambda}(\omega P)\right) \right). 
\end{flalign*}

Note that terms in $c$ and $d$ only cancel out each other, while $\widetilde{\lambda}\left([\beta]P\right)$ cancels with $\widetilde{\lambda}\left([r\omega-\beta]P\right)$ by \Cref{finite values}. By noticing the terms in $r_1$ and $r_2$ resemble $-\frac{1}{2}\mu(r)$, we obtain

\begin{flalign*}
&\quad a\left(\nu\left(\psi_{\beta}\right)-\nu\left(\psi_{r - \beta}\right)\right) &&\\
&= a\left(-\frac{\mu(r)}{2}\right) + 2car_1\widetilde{\lambda}(P) - 2dar_2\widetilde{\lambda}(\omega P) - (dar_1 + acr_2) \left( \widetilde{\lambda}([1 + \omega]P) - \widetilde{\lambda}(P) - \widetilde{\lambda}(\omega P) \right).
\end{flalign*}
Next, we evaluate $b\left[\nu\left(\psi_{\beta}\right)-\nu\left(\psi_{r\omega - \beta}\right)\right] $.
\begin{flalign*}
&\quad b\left(\nu\left(\psi_{\beta}\right)-\nu\left(\psi_{r\omega - \beta}\right)\right) &&\\
&= b\left( \widetilde{\lambda}\left([\beta]P\right) - c^2\widetilde{\lambda}(P) - d^2\widetilde{\lambda}(\omega P) - cd\left[\widetilde{\lambda}([1+\omega]P) - \widetilde{\lambda}(P) - \widetilde{\lambda}(\omega P)\right] \right) &&\\
&\quad - b\left(\widetilde{\lambda}\left([r\omega-\beta]P\right) - (r_2^2D^2 + 2r_2cD + c^2)\widetilde{\lambda}\left(P\right) - (r_1^2 - 2r_1d + d^2)\widetilde{\lambda}\left(\omega P\right) \right) &&\\
&\quad - b\left(-( -r_1r_2D + r_2dD - cr_1 + cd)\left[\widetilde{\lambda}([1+\omega]P) - \widetilde{\lambda}(P) - \widetilde{\lambda}(\omega P)\right] \right).
\end{flalign*}

\noindent Similarly, the terms in $c$ and $d$ cancel out each other, while $\widetilde{\lambda}\left([\beta]P\right)$ cancels with $\widetilde{\lambda}\left([r\omega-\beta]P\right)$ by \Cref{finite values}. By noticing the terms in $r_1$ and $r_2D$ resemble $-\frac{1}{2}\mu_{\omega}(r)$, we get

\begin{flalign*}
&\quad b\left(\nu\left(\psi_{\beta}\right)-\nu\left(\psi_{r\omega - \beta}\right)\right) &&\\
&= b\left(-\frac{\mu_{\omega}(r)}{2}\right) + 2cbr_2D\widetilde{\lambda}(P) - 2dbr_1\widetilde{\lambda}(\omega P) + (dbr_2D - bcr_1) \left( \widetilde{\lambda}([1 + \omega]P) - \widetilde{\lambda}(P) - \widetilde{\lambda}(\omega P) \right).
\end{flalign*}
Now, if we use \Cref{functional eq} to expand $\nu\left(\psi_{\alpha r + \beta}\right)$ and substitute these expressions, we obtain
\begin{flalign*}
&\quad \nu\left(\psi_{\alpha r + \beta}\right) && \\ 
&= \widetilde{\lambda}\left([\beta]P\right) - \left((ar_1 - br_2D) + c\right)^2 \widetilde{\lambda}(P) - \left((ar_2 + br_1)^2 + d\right)^2 \widetilde{\lambda}(\omega P) &&\\ 
&\quad - (ar_1 - br_2D + c)(ar_2 + br_1 + d) \left( \widetilde{\lambda}([1 + \omega]P) - \widetilde{\lambda}(P) - \widetilde{\lambda}(\omega P) \right) && \\ 
&= \widetilde{\lambda}\left([\beta]P\right) - \left( (ar_1 - br_2D)^2 + 2c(ar_1 - br_2D) + c^2 \right) \widetilde{\lambda}(P) && \\ 
&\quad - \left( (ar_2 + br_1)^2 + 2d(ar_2 + br_1) + d^2 \right) \widetilde{\lambda}(\omega P) && \\ 
&\quad - \left( (ar_1 - br_2D)(ar_2 + br_1) + d(ar_1 - br_2D) + c(ar_2 + br_1) + cd \right) && \\ 
&\quad \cdot \left( \widetilde{\lambda}([1 + \omega]P) - \widetilde{\lambda}(P) - \widetilde{\lambda}(\omega P) \right). 
\end{flalign*}

Using \Cref{functional eq}, one can see that the terms in \( c \) and \( d \) resemble \( \nu\left(\psi_{\beta}\right) \). On the other hand, by recalling that \( \alpha r = (ar_1-br_2D)+(ar_2+br_1) \) and comparing with the proof of \Cref{alphar} (Case I), we obtain \( \frac{1}{2}\nu\left(\psi_{\alpha r + 1}\psi_{\alpha r - 1}\right) \), so we get

\begin{flalign*}
&\quad \nu\left(\psi_{\alpha r + \beta}\right) && \\ 
&= \nu\left(\psi_{\beta}\right) + \frac{1}{2} \nu\left(\psi_{\alpha r + 1}\psi_{\alpha r - 1}\right) && \\ 
&\quad + 2car_1 \widetilde{\lambda}(P) - 2dar_2 \widetilde{\lambda}(\omega P) - (dar_1 + acr_2) \left( \widetilde{\lambda}([1 + \omega]P) - \widetilde{\lambda}(P) - \widetilde{\lambda}(\omega P) \right) && \\ 
&\quad + 2cbr_2D \tilde{\lambda}(P) - 2dbr_1 \widetilde{\lambda}(\omega P) + (dbr_2D - bcr_1) \left( \widetilde{\lambda}([1 + \omega]P) - \widetilde{\lambda}(P) - \widetilde{\lambda}(\omega P) \right). 
\end{flalign*}

Finally, assembling everything we obtained before and using \Cref{alphar} (Case I) for \( \nu\left(\psi_{\alpha r + 1}\psi_{\alpha r - 1}\right) \), we obtain

\begin{flalign*}
&\quad \nu\left(\psi_{\alpha r + \beta}\right) && \\ 
&= \nu\left(\psi_{\beta}\right) + a\left(\nu\left(\frac{\psi_{\beta}}{\psi_{r - \beta}}\right) + \frac{\mu(r)}{2}\right) + b\left(\nu\left(\frac{\psi_{\beta}}{\psi_{r \omega - \beta}}\right) + \frac{\mu_{\omega}(r)}{2}\right) &&\\ 
&\quad + \frac{a^{2} - ab}{2} \mu(r) + \frac{b^{2} - ab}{2} \mu_{\omega}(r) + \frac{ab}{2} \mu_{1+\omega}(r).
\end{flalign*}

\textbf{Case II:} $\omega^{2}=\omega-D$ for some square-free integer $D$.\\
For convenience, take note of the following expansions:
\begin{itemize}
\begin{multicols}{2}
\item $\alpha r=(ar_1-br_2D)+(ar_2+br_1+br_2f)\omega $
\item $r-\beta=(r_1-c)+(r_2-d)\omega$
\item $(r_1-c)(r_2-d)=r_1r_2-dr_1-cr_2+d$
\columnbreak
\item $r\omega=-r_2D+(r_1+fr_2)\omega$
\item $r\omega - \beta =(-r_2D-c)+(r_1+fr_2-d)\omega$
\item $(-r_2D-c)(r_1+fr_2-d)=c d - c f r_2 - c r_1 + d D r_2 - f r_2^2D - r_1 r_2D$
\end{multicols}
\end{itemize}

As mentioned, the proof proceeds exactly as in Case I. Note that the behaviour of $\omega$ does not affect $r-\beta$, so $a\left(\nu\left(\frac{\psi_{\beta}}{\psi_{r - \beta}}\right)\right)$ is the same as in Case I, which is 
\[
a\left(-\frac{\mu(r)}{2}\right) + 2car_1\widetilde{\lambda}(P) - 2dar_2\widetilde{\lambda}(\omega P) - (dar_1 + acr_2) \left( \widetilde{\lambda}([1 + \omega]P) - \widetilde{\lambda}(P) - \widetilde{\lambda}(\omega P) \right).
\]
Next, use \Cref{functional eq} to evaluate  and $b\left(\nu\left(\frac{\psi_{\beta}}{\psi_{r\omega - \beta}}\right)\right)$, then apply \Cref{functional eq} to $\nu\left(\psi_{\alpha r + \beta}\right)$ and assemble the terms to get the result. 

\begin{flalign*}
&\quad b\left(\nu\left(\psi_{\beta}\right)-\nu\left(\psi_{r\omega - \beta}\right)\right) &&\\
&= b\left( \widetilde{\lambda}\left([\beta]P\right) - c^2\widetilde{\lambda}(P) - d^2\widetilde{\lambda}(\omega P) - cd\left(\widetilde{\lambda}([1+\omega]P) - \tilde{\lambda}(P) - \widetilde{\lambda}(\omega P)\right) \right) &&\\
&\quad - b\left( \widetilde{\lambda}\left([r\omega-\beta]P\right) - (r_2^2D^2 + 2r_2cD + c^2)\widetilde{\lambda}\left(P\right) - (r_1^2 + 2 f r_1 r_2 + f^2 r_2^2 + d^2 - 2 d f r_2 - 2 d r_1 )\widetilde{\lambda}\left(\omega P\right) \right) &&\\
&\quad -b\left(-(c d - c f r_2 - c r_1 + d D r_2 - D f r_2^2 - D r_1 r_2)\left(\widetilde{\lambda}([1+\omega]P) - \widetilde{\lambda}(P) - \tilde{\lambda}(\omega P)\right) \right) &&\\
&= b\left(-\frac{\mu_{\omega}(r)}{2}\right)+ 2bcr_2D\widetilde{\lambda}(P) -(2bdr_1+2br_2df)\widetilde{\lambda}(\omega P) &&\\
&\quad + (bdr_2D - bcr_1-bcfr_2) \left(\widetilde{\lambda}([1 + \omega]P) - \widetilde{\lambda}(P) - \widetilde{\lambda}(\omega P) \right).
\end{flalign*}
Therefore, we have
\begin{flalign*}
&\quad \nu\left(\psi_{\alpha r + \beta}\right) && \\ 
&= \widetilde{\lambda}\left([\beta]P\right) - \left((ar_1 - br_2D) + c\right)^2 \widetilde{\lambda}(P) - \left((ar_2+br_1+br_2f)^2 + d\right)^2 \widetilde{\lambda}(\omega P) &&\\ 
&\quad - (ar_1 - br_2D + c)(ar_2+br_1+br_2f + d)\left( \widetilde{\lambda}([1 + \omega]P) - \widetilde{\lambda}(P) - \widetilde{\lambda}(\omega P) \right) && \\ 
&= \widetilde{\lambda}\left([\beta]P\right) - \left((ar_1 - br_2D)^2 + 2c(ar_1 - br_2D)+ c^2 \right) \widetilde{\lambda}(P) && \\ 
&\quad - \left((ar_2+br_1+br_2f)^2 + 2d(ar_2+br_1+br_2f)+ d^2 \right) \widetilde{\lambda}(\omega P) && \\ 
&\quad - \left((ar_1 - br_2D)(ar_2+br_1+br_2f) + d(ar_1 - br_2D) + c(ar_2+br_1+br_2f)+ cd \right) && \\ 
&\quad \cdot \left( \widetilde{\lambda}([1 + \omega]P) - \widetilde{\lambda}(P) - \widetilde{\lambda}(\omega P) \right) && \\ 
&= \nu\left(\psi_{\beta}\right) + \frac{1}{2}\nu\left(\psi_{\alpha r + 1}\psi_{\alpha r - 1}\right)&& \\ 
&\quad + 2car_1 \tilde{\lambda}(P) - 2dar_2 \tilde{\lambda}(\omega P) - (dar_1 + acr_2) \left( \widetilde{\lambda}([1 + \omega]P) - \widetilde{\lambda}(P) - \widetilde{\lambda}(\omega P) \right) && \\ 
&\quad +2bcr_2D\tilde{\lambda}(P) -(2bdr_1+2bdr_2f)\tilde{\lambda}(\omega P)  &&\\
&\quad + (bdr_2D - bcr_1-bcr_2f) \left( \widetilde{\lambda}([1 + \omega]P) - \widetilde{\lambda}(P) - \widetilde{\lambda}(\omega P) \right) &&\\
&= \nu\left(\psi_{\beta}\right) + a\left(\nu\left(\frac{\psi_{\beta}}{\psi_{r - \beta}}\right) + \frac{\mu(r)}{2}\right]+ b\left(\nu\left(\frac{\psi_{\beta}}{\psi_{r \omega - \beta}}\right)+\frac{\mu_{\omega}(r)}{2}\right)  &&\\
&\quad + \frac{a^{2} - ab}{2} \mu(r) + \frac{b^{2} - ab}{2} \mu_{\omega}(r) + \frac{ab}{2} \mu_{1+\omega}(r).
\end{flalign*}
For $\nu\left(\psi_{\alpha r - \beta}\right)$, the proof and the computation follows in the exact same way. We will do the case for $\omega^2=-D$ for some square free integer $D$ as an example:
\begin{flalign*}
&\quad \nu\left(\psi_{\alpha r - \beta}\right) && \\ 
&= \widetilde{\lambda}\left([\beta]P\right) - \left((ar_1 - br_2D) - c\right)^2 \widetilde{\lambda}(P) - \left((ar_2 + br_1)^2 - d\right)^2 \widetilde{\lambda}(\omega P) &&\\ 
&\quad - (ar_1 - br_2D - c)(ar_2 + br_1 - d)\left( \widetilde{\lambda}([1 + \omega]P) - \widetilde{\lambda}(P) - \widetilde{\lambda}(\omega P) \right) && \\ 
&= \widetilde{\lambda}\left([\beta]P\right) - \left((ar_1 - br_2D)^2 - 2c(ar_1 - br_2D)+ c^2 \right) \widetilde{\lambda}(P) && \\ 
&\quad - \left((ar_2 + br_1)^2 - 2d(ar_2 + br_1)+ d^2 \right) \widetilde{\lambda}(\omega P) && \\ 
&\quad - \left((ar_1 - br_2D)(ar_2 + br_1) - d(ar_1 - br_2D) - c(ar_2 + br_1) + cd \right) && \\ 
&\quad \cdot \left( \widetilde{\lambda}([1 + \omega]P) - \widetilde{\lambda}(P) - \widetilde{\lambda}(\omega P) \right) && \\ 
&= \nu\left(\psi_{\beta}\right)+ \frac{1}{2}\nu\left(\psi_{\alpha r + 1}\psi_{\alpha r - 1}\right) && \\ 
&\quad - 2car_1 \widetilde{\lambda}(P) + 2dar_2 \widetilde{\lambda}(\omega P) + (dar_1 + acr_2) \left( \widetilde{\lambda}([1 + \omega]P) - \widetilde{\lambda}(P) - \widetilde{\lambda}(\omega P) \right) && \\ 
&\quad - 2cbr_2D \widetilde{\lambda}(P) + 2dbr_1 \widetilde{\lambda}(\omega P) - (dbr_2D - bcr_1) \left( \widetilde{\lambda}([1 + \omega]P) - \widetilde{\lambda}(P) - \widetilde{\lambda}(\omega P) \right) &&\\
&= \nu\left(\psi_{\beta}\right) - a\left(\nu\left(\frac{\psi_{\beta}}{\psi_{r - \beta}}\right) + \frac{\mu(r)}{2}\right)- b\left(\nu\left(\frac{\psi_{\beta}}{\psi_{r \omega - \beta}}\right)+\frac{\mu_{\omega}(r)}{2}\right) &&\\
&\quad + \frac{a^{2} - ab}{2} \mu(r) + \frac{b^{2} - ab}{2} \mu_{\omega}(r) + \frac{ab}{2} \mu_{1+\omega}(r).
\end{flalign*}
\qedhere
\end{proof}

\subsection{Summary}
If $P$ has good reduction, then any multiples of $P$ will also have good reduction. \Cref{strong} tells us that the difference between the denominator ideal $D_{\bm{v}\cdot \bm{P}}^2$ and the denominator of $x(\bm{v}\cdot\bm{P})$ is exactly $F_{\bm{v}}(\bm{P})^{2}$. Using the notations in (\ref{expression}), we necessarily have 
\[
D_{z}(P)^{2}F_{\bm{z}}(\bm{P})^{-2}=\Psi_{\bm{z}}(\bm{P})^2.
\]
\textcolor{red}{Applying} the local height formula for good reduction (see (\ref{good reduction local height})) to the points $P, [\omega]P$ and $P+[\omega]P$\textcolor{red}{, we obtain}
\begin{align*}
g_{z, \nu}(P) 
&=-2\nu\left(F_{\bm{z}}(\bm{P})\right) \\
&= ( z_1 z_2-z_1^2)\max(0, -\nu(x(P))) 
   + (z_1 z_2-z_2^2)\max(0, -\nu(x([\omega]P))) \\
&\quad - (z_1 z_2)\max(0, -\nu(x([1+\omega]P)))\textcolor{red}{.}
\end{align*}

\noindent Now let $r$ be an element of the annihilator of $P$ in the $\Z[\omega]$--module $E(K)/E_{0}(K)$. If $P$ has bad reduction, but $[z]P=[\alpha r]P$ has good reduction, this is exactly the statement in \Cref{good}, which tells us that the cancellation exponent depends on if the points $[r]P, [r\omega]P$ and $[r(1+\omega)]P]$ are equivalent to the identity $\bmod \nu$. If $P$ has bad reduction, and $[z]P=[\alpha r \pm \beta]P$ also has bad reduction, then $[z]P \not\equiv O \bmod \p$. For this to be true, we must have $g_{z, \nu}(P)=2\nu(\psi_{z}(P))$, whose formula was given in \Cref{NotAnn}. Overall, when $P$ has bad reduction, the formula is 
\begin{equation}
g_{z, \nu}(P) = 
\begin{cases}
(a^{2} - ab)\mu + (b^{2} - ab)\mu_{\omega} + (ab)\mu_{1+\omega}, 
& \text{if $z = \alpha r \in \textcolor{red}{\A(\p, P)}$,} \\[0.5em]
2 \nu\!\left(\psi_{\beta}\right) 
\pm 2a\!\textcolor{red}{\left(\nu\!\left(\dfrac{\psi_{\beta}}{\psi_{r - \beta}}\right) 
   + \dfrac{\mu(r)}{2}\right)}
\pm 2b\!\textcolor{red}{\left(\nu\!\left(\dfrac{\psi_{\beta}}{\psi_{r \omega - \beta}}\right) 
   + \dfrac{\mu_{\omega}(r)}{2}\right)} \\[0.5em]
\quad + (a^{2} - ab)\mu(r) + (b^{2} - ab)\mu_{\omega}(r) + (ab)\mu_{1+\omega}(r), 
& \text{if $z = \alpha r \pm \beta \not\in \textcolor{red}{\A(\p, P)}$.}
\end{cases}
\end{equation}
where 
\[
\alpha = a + b\omega \in \Z[\omega], 
\mu(r) = g_{r, \nu}(P), \quad 
\mu_{\omega}(r) = g_{r\omega, \nu}(P), \quad 
\mu_{1+\omega}(r) = g_{r(1+\omega), \nu}(P).
\]


\section{Proof of \Cref{CM recurrence} and \Cref{general CM recurrence}}
As an application of our main result, we will now extend the work of Verzobio in \cite{Verzobio}. Let \( E/K \) be an elliptic curve with coefficients in \( \mathcal{O}_K \) and CM by an order of an imaginary quadratic field \( \mathbb{Z}[\omega] \). For a point \( P \in E(K) \), one can define a sequence of integral ideals \( \{D_{[\alpha]P}\}_{\alpha \in \mathbb{Z}[\omega]} \) that represents the square root of the denominator of \( x(\alpha P) \mathcal{O}_K \). This sequence is often referred to in the literature as an \emph{elliptic divisibility sequence}. However, even in the simplified case where \( K = \mathbb{Q} \) and \( E \) does not have CM, this sequence generally does not satisfy the recurrence relation~(\ref{recurrence}). Verzobio showed in \cite{Verzobio} that, under these conditions, by appropriately normalising the sequence \( \{D_{[n]P}\}_{n \in \mathbb{N}} \), it will satisfy (\ref{recurrence}) conditionally. 

\noindent As outlined in \cite[Section 4]{Verzobio2}, the key idea behind Verzobio's proof is that the sequence of division polynomials \( \{\psi_n\}_{n \in \mathbb{N}} \) satisfies the recurrence relation (\ref{recurrence}). This sequence is linked to \( \{D_{[n]P}\}_{n \in \mathbb{N}} \) through the cancellation exponent \( g_{n, \nu} \), as expressed in (\ref{expression}). Therefore, the most important step of the proof is to determine the conditions under which the cancellation exponent \( g_{n, \nu} \) also satisfies the recurrence relation (\ref{recurrence}). We will follow the same argument for our cancellation exponents and denominator ideals. 

\begin{prop}\label{QF recurrence}
Let $E$ be an elliptic curve defined by a Weierstrass equation with integer coefficients in $K$ and \textcolor{red}{have} complex multiplication by $\Z[\omega]$. Let $P \in E(K)$ be a non-torsion point. Let $\nu$ be a finite place and $\p$ be the associated prime. Suppose that either one of $\alpha, \beta \in \textcolor{red}{\A(\p, P)}$, or $\alpha$ and $\beta$ are both multiples of some $r\in \textcolor{red}{\A(\p, P)}$. Then
\begin{equation}
g_{\alpha + \beta, \nu}(P)+g_{\alpha-\beta, \nu}(P)=2\left(g_{\alpha, \nu}(P)+g_{\beta, \nu}(P)\right),
\end{equation}
where $g_{\alpha, \nu}(P)$ is as defined in \Cref{gzP}. 
\end{prop}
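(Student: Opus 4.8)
The plan is to reduce the claimed parallelogram law for $g_{z,\nu}$ to a parallelogram law for a correction quantity supported only on the singular points, exactly in the spirit of Verzobio's argument but phrased through the local height. First I would rewrite the cancellation exponent purely in terms of the local height and the net polynomial valuation. Writing $h(z)=\widetilde{\lambda}([z]P)$ and $p(z)=\nu(\psi_z(P))$, and recalling that $x([z]P)=\phi_z(P)/\psi_z(P)^2$ so that $\nu(x([z]P))=\nu(\phi_z(P))-2p(z)$, the definition in \Cref{gzP} gives the identity
\[
g_{z,\nu}(P)=\Min\bigl(2p(z),\,2p(z)+\nu(x([z]P))\bigr)=2p(z)+\Min\bigl(0,\nu(x([z]P))\bigr).
\]
I then set $D(z):=2h(z)+\Min(0,\nu(x([z]P)))$, so that $g_{z,\nu}(P)=2\bigl(p(z)-h(z)\bigr)+D(z)$ holds identically.

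The crucial observation is that $p-h$ satisfies the \emph{exact} parallelogram law, with no hypotheses. Indeed, the normalised quasi-parallelogram law (\ref{tt}) applied to $P'=[\alpha]P$, $Q'=[\beta]P$, and the net polynomial relation from \Cref{net poly}, namely $\psi_\alpha^2\psi_\beta^2\bigl(x([\alpha]P)-x([\beta]P)\bigr)=-\psi_{\alpha+\beta}\psi_{\alpha-\beta}$, each produce the \emph{same} defect term $\nu(x([\alpha]P)-x([\beta]P))$:
\[
h(\alpha+\beta)+h(\alpha-\beta)=2h(\alpha)+2h(\beta)+\nu(x([\alpha]P)-x([\beta]P)),
\]
\[
p(\alpha+\beta)+p(\alpha-\beta)=2p(\alpha)+2p(\beta)+\nu(x([\alpha]P)-x([\beta]P)).
\]
Subtracting, the defects cancel, so $(p-h)(\alpha+\beta)+(p-h)(\alpha-\beta)=2(p-h)(\alpha)+2(p-h)(\beta)$. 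Combining this with $g_{z,\nu}=2(p-h)(z)+D(z)$ yields
\[
g_{\alpha+\beta,\nu}+g_{\alpha-\beta,\nu}-2g_{\alpha,\nu}-2g_{\beta,\nu}
=D(\alpha+\beta)+D(\alpha-\beta)-2D(\alpha)-2D(\beta),
\]
so it suffices to prove the parallelogram law for $D$.

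Next I would pin down $D$. If $[z]P$ is non-singular, i.e. $z\in\Ann(P)$ so $[z]P\in E_0(K)$, the good-reduction formula (\ref{good reduction local height}) gives $2h(z)=\max(0,-\nu(x([z]P)))=-\Min(0,\nu(x([z]P)))$, whence $D(z)=0$. If $[z]P$ is singular it reduces to the cusp, an affine point, so $\nu(x([z]P))\ge 0$ and $D(z)=2h(z)$. The parallelogram law for $D$ now follows case by case under the hypotheses. When $\alpha,\beta$ are both multiples of some $r\in\Ann(P)$, the $\Z[\omega]$-module structure of $E_0(K)$ forces $\alpha,\beta,\alpha\pm\beta$ all into $\Ann(P)$, so $D$ vanishes at all four arguments and the identity is trivial; likewise if both $\alpha,\beta\in\Ann(P)$. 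In the remaining case one of them, say $\alpha$, lies in $\Ann(P)$ while $\beta$ does not; then $[\alpha]P$ is non-singular while $[\beta]P,[\alpha\pm\beta]P$ are all singular, so the law for $D$ reduces to $\widetilde{\lambda}([\alpha+\beta]P)+\widetilde{\lambda}([\alpha-\beta]P)=2\widetilde{\lambda}([\beta]P)$. This is precisely \Cref{finite values} applied with $r=\alpha$, giving $\lambda([\beta]P)=\lambda([\alpha\pm\beta]P)$.

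I expect the main obstacle to be the bookkeeping underlying the reduction to $D$: recognising that the defect terms in (\ref{tt}) and in the net polynomial relation are literally the same quantity $\nu(x([\alpha]P)-x([\beta]P))$ and therefore cancel, and separately checking that $D$ vanishes on $E_0(K)$, which relies on the good-reduction height formula holding for all non-singular points (not merely when $E$ has global good reduction at $\p$). Once these two facts are secured, the case analysis is short, the only substantive input being the height-invariance statement \Cref{finite values}.
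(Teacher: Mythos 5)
Your proposal is correct, but it takes a genuinely different route from the paper's. The paper splits on whether $P$ is singular modulo $\p$: in the non-singular case it invokes \Cref{strong}, so the parallelogram law is inherited from the quadratic form $F_{\bm{v}}(\bm{P})$, and in the singular case it substitutes the explicit valuation formulas of \Cref{CM gcd2} into both sides and verifies the cancellation by direct expansion in five separate cases. You instead decompose $g_{z,\nu}(P)=2\bigl(p(z)-h(z)\bigr)+D(z)$, observe that the defect term $\nu\bigl(x([\alpha]P)-x([\beta]P)\bigr)$ appearing in the quasi-parallelogram law (\ref{tt}) and in the identity of \Cref{net poly}(d) is literally the same quantity, so that $p-h$ satisfies the exact parallelogram law --- in effect a self-contained re-derivation of the parallelogram law for the quadratic form $\varepsilon$ of (\ref{epsilon}), which the paper imports from Akbary --- and then dispose of the correction $D$, which vanishes on $E_{0}(K)$ by \Cref{formula} and equals $2\widetilde{\lambda}$ at singular multiples, by the ideal property of $\Ann(P)$ in the trivial cases and by \Cref{finite values} in the mixed case. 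What your route buys: it is logically independent of the main theorem (none of \Cref{good}, \Cref{alphar}, \Cref{NotAnn} is needed), it treats the non-singular and singular cases uniformly (when $P$ is non-singular, $\Ann(P)=\Z[\omega]$ and all four values of $D$ vanish), and it isolates \Cref{finite values} as the single arithmetic input; what the paper's route buys is a consistency check of the explicit formulas it has just established, at the cost of substantially longer computations. Two caveats, both of which apply equally to the paper's own proof chain (which reaches \Cref{finite values} through \Cref{NotAnn}): \Cref{finite values} is stated under the extra hypotheses that $\Z[\omega]$ is a PID and that $E$ has additive reduction at $\nu$ --- the latter is automatic here, since CM forces potential good reduction and hence excludes multiplicative reduction at any prime where a multiple of $P$ is singular, while the former is never used in its proof --- and the degenerate situations where one of $[\alpha]P,[\beta]P,[\alpha\pm\beta]P$ equals $O$ (e.g.\ $\alpha=\pm\beta$), where the quasi-parallelogram law does not apply and $g$ is defined to be $0$, are passed over silently in both arguments.
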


\begin{proof}
We first handle the case where \( P \) is non-singular \(\Mod \p\). In this case, we have \(d_{\p}\left(x(P\OO_{K})\right) \leq 0\). From \Cref{strong}, we know that 
\[
d_{\p}\left(D_{\bm{\alpha} \cdot \bm{P}}\right) = d_{\p}\left(\widehat{\Psi}_{\bm{\alpha}}(\bm{P})\right) = d_{\p}\left(\Psi_{\bm{\alpha}}(\bm{P})\right) + d_{\p}\left(F_{\bm{\alpha}}(\bm{P})\right).
\]
Therefore, 
\[
g_{\alpha, \nu}(P) = d_{\p}\left(D_{\bm{\alpha} \cdot \bm{P}}\right) - d_{\p}\left(\Psi_{\bm{\alpha}}(\bm{P})\right) = d_{\p}\left(F_{\bm{\alpha}}(\bm{P})\right).
\]
Recall that \( F(\bm{P})\colon \mathbb{Z}^2 \to I_{K} \) is a quadratic form, so it satisfies \( F_{\bm{\alpha} + \bm{\beta}} F_{\bm{\alpha} - \bm{\beta}} = F_{\bm{\alpha}}^2 F_{\bm{\beta}}^2 \), hence 
\[
d_{\p}\left(F_{\bm{\alpha} + \bm{\beta}}\right) + d_{\p}\left(F_{\bm{\alpha} - \bm{\beta}}\right) = 2\left(d_{\p}\left(F_{\bm{\alpha}}\right) + d_{\p}\left(F_{\bm{\beta}}\right)\right).
\]

If $P$ is singular $\Mod \p$, then according to the formula for $g_{z, \nu}(P)$, we have different cases to consider: let $\alpha_{1}=a_{1}+b_{1}\omega, \alpha_{2}=a_{2}+b_{2}\omega$ be elements in $\Z[\omega]$. Fix an element $r \in \textcolor{red}{\A(\p, P)}$.\\

\underline{Case I:} both $\alpha$ and $\beta$ are multiples of $r$ \\
Write $\alpha=\alpha_{1}r=(a_{1}+b_{1}\omega)r, \beta=\alpha_{2}r=(a_{2}+b_{2}\omega)r$. By \Cref{CM gcd}, we have 
\[
g_{\alpha, \nu}(P)=(a_1^{2}-a_1b_1)\mu(r)+(b_1^{2}-a_1b_1)\mu_{\omega}(r)+(a_1b_1)\mu_{1+\omega}(r),
\]
\text{and}
\[ 
 g_{\beta, \nu}(P)=(a_2^{2}-a_2b_2)\mu(r)+(b_2^{2}-a_2b_2)\mu_{\omega}(r)+(a_2b_2)\mu_{1+\omega}(r).
\] 
Therefore, we get 
\begin{flalign*}
&\indent g_{\alpha+\beta, \nu}(P)+g_{\alpha-\beta, \nu}(P) &&\\
&=\left((a_1+a_2)^2-(a_1+a_2)(b_1+b_2)\right)\mu(r)+\left((b_1+b_2)^2-(a_1+a_2)(b_1+b_2)\right)\mu_{\omega}(r) &&\\
&\quad +(a_1+a_2)(b_1+b_2)\mu_{1+\omega}(r) +\left((a_1-a_2)^2-(a_1-a_2)(b_1-b_2)\right)\mu(r) &&\\
&\quad +\left((b_1-b_2)^2-(a_1-a_2)(b_1-b_2)\right)\mu_{\omega}(r)+(a_1-a_2)(b_1-b_2)\mu_{1+\omega}(r) &&\\
&=(2a_1^2+2a_2^2\textcolor{red}{-}2a_1b_1\textcolor{red}{-}2a_2b_2)\mu(r)+(2b_1^2+2b_2^2\textcolor{red}{-}2a_1b_1\textcolor{red}{-}2a_2b_2)\mu_{\omega}(r) &&\\
&\quad +(2a_1\textcolor{red}{b_1}+2\textcolor{red}{a_2}b_2)\mu_{1+\omega}(r) &&\\
&=2\left(g_{\alpha, \nu}(P)+g_{\beta, \nu}(P)\right).
\end{flalign*}

\underline{Case II:} $\alpha$ is a multiple of $r$, but not $\beta$, where $\alpha=\alpha_{1}r=(a_{1}+b_{1}\omega)r, \beta=\alpha_{2}r+\gamma=(a_{2}+b_{2}\omega)r+\gamma$ for $\gamma \not\in \textcolor{red}{\A(\p, P)}$.\\
Apply \Cref{CM gcd} to get, 
\begin{flalign*}
&\quad g_{\alpha+\beta, \nu}(P)+g_{\alpha-\beta, \nu}(P)-2\left(g_{\alpha, \nu}(P)-g_{\beta, \nu}(P)\right) &&\\
&=2 \nu\left(\psi_{\gamma}\right) + 2(a_1+a_2)\left(\nu\left(\frac{\psi_{\gamma}}{\psi_{r - \gamma}}\right) + \frac{\mu(r)}{2}\right)+2(b_1+b_2)\left(\nu\left(\frac{\psi_{\gamma}}{\psi_{r \omega - \gamma}}\right)+\frac{\mu_{\omega}(r)}{2}\right) &&\\
& \quad+\left((a_1+a_2)^{2} - (a_1+a_2)(b_1+b_2)\right) \mu(r) + \left((b_1+b_2)^{2} - (a_1+a_2)(b_1+b_2)\right) \mu_{\omega}(r) &&\\
&\quad + (a_1+a_2)(b_1+b_2)\mu_{1+\omega}(r) &&\\
&\quad+2 \nu\left(\psi_{\gamma}\right) - 2(a_1-a_2)\left(\nu\left(\frac{\psi_{\gamma}}{\psi_{r - \gamma}}\right) + \frac{\mu(r)}{2}\right)-2(b_1-b_2)\left(\nu\left(\frac{\psi_{\gamma}}{\psi_{r \omega - \gamma}}\right)+\frac{\mu_{\omega}(r)}{2}\right) &&\\
& \quad+\left((a_1-a_2)^{2} - (a_1-a_2)(b_1-b_2)\right) \mu(r) + \left((b_1-b_2)^{2} - (a_1-a_2)(b_1-b_2)\right) \mu_{\omega}(r) &&\\
&\quad + (a_1-a_2)(b_1-b_2)\mu_{1+\omega}(r) &&\\
&\quad \textcolor{red}{-2}\left(2 \nu\left(\psi_{\gamma}\right) + 2a_2\left(\nu\left(\frac{\psi_{\beta}}{\psi_{r - \beta}}\right) + \frac{\mu(r)}{2}\right)+2b_2\left(\nu\left(\frac{\psi_{\beta}}{\psi_{r \omega - \beta}}\right)+\frac{\mu_{\omega}(r)}{2}\right)\right) &&\\
& \quad-2\left((a_1^{2}+a_2^2 - a_1b_1 - a_2b_2) \mu(r) + (b_1^{2}+b_2^2 - a_1b_1 - a_2b_2) \mu_{\omega}(r)+ (a_1b_1+a_2b_2)\mu_{1+\omega}(r)\right) &&\\
&=0.
\end{flalign*}

\underline{Case III:} $\alpha$ is a multiple of $r$, but not $\beta$, where $\alpha=\alpha_{1}r=(a_{1}+b_{1}\omega)r, \beta=\alpha_{2}r+\gamma=(a_{2}+b_{2}\omega)r-\gamma$ for $\gamma \not\in \textcolor{red}{\A(\p, P)}$. Repeat the proof in Case II to get the result. 

\underline{Case IV:} $\beta$ is a multiple of $r$, but not $\alpha$, where $\alpha=\alpha_{1}r+\gamma=(a_{1}+b_{1}\omega)r+\gamma, \beta=\alpha_{2}r=(a_{2}+b_{2}\omega)r$ for $\gamma \not\in \textcolor{red}{\A(\p, P)}$. Since $\Psi_{\bm{v}}$ is an odd function, this is the same as Case II.

\underline{Case V:} $\beta$ is a multiple of $r$, but not $\alpha$, where $\alpha=\alpha_{1}r-\gamma=(a_{1}+b_{1}\omega)r-\gamma, \beta=\alpha_{2}r=(a_{2}+b_{2}\omega)r$ for $\gamma \not\in \textcolor{red}{\A(\p, P)}$. This is the same as Case III.
\end{proof}

In other words, this proposition tells us that if \textcolor{red}{(\ref{QF recurrence}) holds} for all finite places, then the cancellation exponent of $\psi^2_{z}$ and $\phi_{z}$ \textcolor{red}{is a quadratic form}. 
\begin{defn}\label{M(P)}
Let $E$ be an elliptic curve defined by a Weierstrass equation with coefficients in $\OO_{K}$ and discriminant $\Delta$. We define 
\[
\mathfrak{M}(P)\coloneqq \Lcm_{\mathfrak p}\{\A(\mathfrak p, P)\}
=\textcolor{red}{\displaystyle\bigcap_{\mathfrak p \mid \Delta}} \A(\mathfrak p, P),
\]
\textcolor{red}{where $\A(\p, P)$ is as in \Cref{ANN}}. 
\end{defn}

\begin{cor}
If one of $\alpha$ and $\beta$ is an element in the ideal $\mathfrak{M}(P)$, then (\ref{QF recurrence}) holds for every finite place $\nu$. 
\end{cor}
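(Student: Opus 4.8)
The plan is to reduce the corollary directly to \Cref{QF recurrence}. That proposition asserts, at a single finite place $\nu$ with associated prime $\p$, that the recurrence holds as soon as one of $\alpha, \beta$ lies in the annihilator $\A(\p, P)$ of $P$ in $E(K_\nu)/E_0(K_\nu)$ (or both are multiples of a common $r \in \A(\p, P)$). Since the recurrence is already established there, unconditionally, at every prime where $P$ is non-singular modulo $\p$ — at such a prime $\A(\p, P) = \Z[\omega]$ and the quadratic form $F$ of \eqref{F} does all the work — the only places needing attention are the finitely many primes at which $P$ is singular, all of which divide $\Delta$. So it suffices to show that membership of $\alpha$ (or $\beta$) in $\mathfrak{M}(P)$ forces it into $\A(\p, P)$ at each such prime.

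First I would record the containment supplied by \Cref{M(P)}. There $\mathfrak{M}(P)$ is defined as the least common multiple of the annihilator ideals $\A(\p, P)$; read in the ideal-theoretic sense, the least common multiple of ideals of $\Z[\omega]$ is their intersection, so
\[
\mathfrak{M}(P) = \bigcap_{\p} \A(\p, P) \subseteq \A(\p, P) \quad \text{for every prime } \p.
\]
At a prime where $P$ is non-singular one has $P \in E_0(K_\nu)$; since $E_0(K_\nu)$ is a $\Z[\omega]$-submodule, $[z]P \in E_0(K_\nu)$ for all $z \in \Z[\omega]$, so $\A(\p, P) = \Z[\omega]$, which contributes trivially to the intersection. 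This is exactly why the definition may restrict the range to $\p \mid \Delta$ without changing $\mathfrak{M}(P)$.

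The conclusion is then immediate. Suppose $\alpha \in \mathfrak{M}(P)$ (the case $\beta \in \mathfrak{M}(P)$ is identical). By the containment above, $\alpha \in \A(\p, P)$ — that is, $\alpha \in \Ann(P)$ in $E(K_\nu)/E_0(K_\nu)$ — for every finite place $\nu$. Thus the first alternative in the hypothesis of \Cref{QF recurrence} is satisfied at each $\nu$, and applying that proposition place by place gives
\[
g_{\alpha+\beta, \nu}(P) + g_{\alpha-\beta, \nu}(P) = 2\bigl(g_{\alpha, \nu}(P) + g_{\beta, \nu}(P)\bigr)
\]
for every finite place $\nu$, which is the assertion.

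The argument is short because all the genuine content already resides in \Cref{QF recurrence}; the only step that needs care is the correct reading of $\mathfrak{M}(P)$. The point to get right is the direction of the containment: one must take $\Lcm$ as the intersection, so that lying in $\mathfrak{M}(P)$ means being a common multiple of every local annihilator simultaneously, and hence annihilating $P$ modulo $E_0$ at every singular place at once. With that reading the hypothesis of \Cref{QF recurrence} is met uniformly over all finite places, and no additional computation is required.
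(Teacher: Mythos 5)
Your proof is correct and is essentially the argument the paper intends: the paper states this corollary without proof, treating it as immediate from \Cref{QF recurrence} once one observes that membership in $\mathfrak{M}(P)$ forces membership in every local annihilator $\A(\p, P)$, while at the primes where $P$ is non-singular the conclusion of \Cref{QF recurrence} holds with no hypothesis on $\alpha,\beta$ at all. The one step where you add genuine value is your reading of \Cref{M(P)}: you interpret $\Lcm_{\p}\{\A(\p,P)\}$ as the intersection $\bigcap_{\p}\A(\p,P)$, which is indeed the ideal-theoretic least common multiple and is exactly what the corollary needs. Note, however, that the displayed formula in \Cref{M(P)} equates this $\Lcm$ with the sum $\sum_{\p \mid \Delta}\A(\p,P)$, which is the ideal-theoretic \emph{gcd} rather than the lcm; under that literal reading the containment $\mathfrak{M}(P)\subseteq \A(\p,P)$, and with it the corollary, would fail. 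Your choice of the intersection is the correct repair, and it is consistent with Verzobio's definition $M(P)=\lcm_{p}\{r_p\}$ in the rational case, where being a multiple of the lcm of the local orders means lying in every local annihilator simultaneously.
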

We are almost ready to show the recurrence relation of the denominator net, after writing down one easy result about cancellation exponents.

\begin{lemma}
Let $K$ be a number field, and let $\mathfrak{a}, \mathfrak{b}, \mathfrak{c}, \mathfrak{d}$ be ideals in $\mathcal{O}_K$ such that $\mathfrak{a}$ and $\mathfrak{b}$ are coprime. 
If there exists another ideal $\mathfrak{g}$ in $\mathcal{O}_K$ satisfying 
\[
\mathfrak{g}\mathfrak{a}(\mathfrak{g}\mathfrak{b})^{-1} = \mathfrak{c}\mathfrak{d}^{-1},
\]
then
\[
\mathfrak{g} = \prod_{\mathfrak{p}\ \text{\emph{prime}}} \mathfrak{p}^{\min\{d_{\mathfrak{p}}(\mathfrak{c}),\, d_{\mathfrak{p}}(\mathfrak{d})\}}.
\]
\end{lemma}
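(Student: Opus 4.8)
The plan is to argue one prime at a time, using that the exponent function $d_{\mathfrak{p}}$ of \Cref{Valuations} is additive on products of ideals and that an ideal of $\mathcal{O}_K$ is determined by its family of prime exponents $(d_{\mathfrak{p}}(\cdot))_{\mathfrak{p}}$. The first thing I would do is fix the reading of the hypothesis. Taken literally as an identity of fractional ideals, $\mathfrak{g}\mathfrak{a}(\mathfrak{g}\mathfrak{b})^{-1}$ collapses to $\mathfrak{a}\mathfrak{b}^{-1}$, so the $\mathfrak{g}$-factors cancel and the equation alone cannot pin down $\mathfrak{g}$. The content of the statement is therefore that $\mathfrak{c}$ and $\mathfrak{d}$ are the numerator and denominator of the \emph{unreduced} fraction, that is, $\mathfrak{c} = \mathfrak{g}\mathfrak{a}$ and $\mathfrak{d} = \mathfrak{g}\mathfrak{b}$ as integral ideals, with $\mathfrak{a}\mathfrak{b}^{-1}$ the reduced form singled out by the coprimality $\gcd(\mathfrak{a}, \mathfrak{b}) = \mathcal{O}_K$. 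I would state this interpretation explicitly at the opening of the proof.

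Next I would apply $d_{\mathfrak{p}}$ to the two integral identities. By additivity this yields, for every prime $\mathfrak{p}$, the relations $d_{\mathfrak{p}}(\mathfrak{c}) = d_{\mathfrak{p}}(\mathfrak{g}) + d_{\mathfrak{p}}(\mathfrak{a})$ and $d_{\mathfrak{p}}(\mathfrak{d}) = d_{\mathfrak{p}}(\mathfrak{g}) + d_{\mathfrak{p}}(\mathfrak{b})$, whence $\min\{d_{\mathfrak{p}}(\mathfrak{c}), d_{\mathfrak{p}}(\mathfrak{d})\} = d_{\mathfrak{p}}(\mathfrak{g}) + \min\{d_{\mathfrak{p}}(\mathfrak{a}), d_{\mathfrak{p}}(\mathfrak{b})\}$. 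The coprimality of $\mathfrak{a}$ and $\mathfrak{b}$ means that no prime divides both, so $\min\{d_{\mathfrak{p}}(\mathfrak{a}), d_{\mathfrak{p}}(\mathfrak{b})\} = 0$ at every prime. Hence $d_{\mathfrak{p}}(\mathfrak{g}) = \min\{d_{\mathfrak{p}}(\mathfrak{c}), d_{\mathfrak{p}}(\mathfrak{d})\}$ for all $\mathfrak{p}$, and unique factorisation in the Dedekind domain $\mathcal{O}_K$ then reassembles the ideal as $\mathfrak{g} = \prod_{\mathfrak{p}} \mathfrak{p}^{\min\{d_{\mathfrak{p}}(\mathfrak{c}),\, d_{\mathfrak{p}}(\mathfrak{d})\}}$, which is the claim.

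I expect no serious difficulty in the computation itself, since it is essentially a one-line valuation argument once the primes are separated. The only genuinely delicate point is the interpretation of the hypothesis flagged above: because the $\mathfrak{g}$-factors cancel in any fractional-ideal reading, the statement only has content under the integral reading $\mathfrak{c} = \mathfrak{g}\mathfrak{a}$, $\mathfrak{d} = \mathfrak{g}\mathfrak{b}$, which is exactly the way the lemma is invoked downstream, with $\mathfrak{a}\mathfrak{b}^{-1}$ the reduced form of $x([\alpha]P)\mathcal{O}_K$ and $\mathfrak{g}$ the common cancellation factor recovered as $\gcd(\mathfrak{c}, \mathfrak{d})$. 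I would therefore make this assumption explicit in the write-up so that it is clear which ideals are taken to be integral and which pair is coprime.
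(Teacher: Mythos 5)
Your proof is correct and is essentially the paper's own argument: the paper likewise works one prime at a time, uses coprimality to force $\min\{d_{\mathfrak{p}}(\mathfrak{a}),\, d_{\mathfrak{p}}(\mathfrak{b})\} = 0$, and concludes $d_{\mathfrak{p}}(\mathfrak{g}) = \min\{d_{\mathfrak{p}}(\mathfrak{c}),\, d_{\mathfrak{p}}(\mathfrak{d})\}$ before reassembling $\mathfrak{g}$ by unique factorisation. The only difference is that you make explicit the integral reading $\mathfrak{c} = \mathfrak{g}\mathfrak{a}$, $\mathfrak{d} = \mathfrak{g}\mathfrak{b}$, which the paper's proof uses tacitly in its final equality $\min\{d_{\mathfrak{p}}(\mathfrak{a}\mathfrak{g}),\, d_{\mathfrak{p}}(\mathfrak{b}\mathfrak{g})\} = \min\{d_{\mathfrak{p}}(\mathfrak{c}),\, d_{\mathfrak{p}}(\mathfrak{d})\}$ --- a worthwhile clarification, since as you observe the hypothesis read literally as an equation of fractional ideals cannot determine $\mathfrak{g}$ at all.
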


\begin{proof}
By assumption, there is no prime ideal of $\mathcal{O}_K$ that divides both $\mathfrak{a}$ and $\mathfrak{b}$ simultaneously, so we must have 
$d_{\mathfrak{p}}(\mathfrak{a}) = 0$ or $d_{\mathfrak{p}}(\mathfrak{b}) = 0$. 
Therefore,
\[
d_{\mathfrak{p}}(\mathfrak{g})
= d_{\mathfrak{p}}(\mathfrak{g}) + \min\left(d_{\mathfrak{p}}(\mathfrak{a}),\, d_{\mathfrak{p}}(\mathfrak{b})\right)
= \min\left(d_{\mathfrak{p}}(\mathfrak{a}\mathfrak{g}),\, d_{\mathfrak{p}}(\mathfrak{b}\mathfrak{g})\right)
= \min\left(d_{\mathfrak{p}}(\mathfrak{c}),\, d_{\mathfrak{p}}(\mathfrak{d})\right),
\]
which implies the result immediately.
\end{proof}

Therefore, we have the following result immediately.

\begin{prop}\label{cancellation exponent}
Let $E$ be an elliptic curve defined by a Weierstrass equation with integer coefficients in $K$ and has complex multiplication by $\Z[\omega]$. Let $P \in E(K)$ be a non-torsion point. Let $\nu$ be a finite place and $\p$ be the associated prime. Then
\[
D_{\bm{\alpha} \cdot \bm{P}}=\Psi_{\bm{\alpha}}(\bm{P})\OO_{K}\prod_{\p\,\,\text{\emph{prime}}} \p^{-\frac{1}{2}g_{\alpha, \nu}(P)},
\]
up to multiplication by unit. 
\end{prop}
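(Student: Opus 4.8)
The plan is to compare, prime by prime, the two factorisations of $x([\alpha]P)\OO_K$ that are already available and then invoke the preceding lemma to read off their common factor. First I would fix $\alpha \in \Z[\omega]$ and record the two descriptions of $x([\alpha]P)$ coming from (\ref{notation}): the reduced ideal factorisation $x([\alpha]P)\OO_K = A_{\bm{\alpha}\cdot\bm{P}}\,D_{\bm{\alpha}\cdot\bm{P}}^{-2}$, in which $A_{\bm{\alpha}\cdot\bm{P}}$ and $D_{\bm{\alpha}\cdot\bm{P}}^{2}$ are coprime integral ideals, and the net-polynomial ratio $x([\alpha]P) = \Phi_{\bm{\alpha}}(\bm{P})/\Psi_{\bm{\alpha}}(\bm{P})^{2}$. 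Setting $\cc = \Phi_{\bm{\alpha}}(\bm{P})\OO_K$, $\D = \Psi_{\bm{\alpha}}(\bm{P})^{2}\OO_K$, $\A = A_{\bm{\alpha}\cdot\bm{P}}$ and $\B = D_{\bm{\alpha}\cdot\bm{P}}^{2}$, both ratios equal the fractional ideal $x([\alpha]P)\OO_K$, and $\A,\B$ are coprime by definition of the denominator net.

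Next I would apply the lemma immediately preceding this proposition with these choices. It produces the common factor $\g$ with $\cc = \g\A$ and $\D = \g\B$, whose $\p$-exponent is
\[
d_{\p}(\g) = \min\{d_{\p}(\cc),\, d_{\p}(\D)\} = \min\{\nu(\Phi_{\bm{\alpha}}(\bm{P})),\, 2\nu(\Psi_{\bm{\alpha}}(\bm{P}))\} = g_{\alpha,\nu}(P),
\]
where the middle equality uses that $\nu$ is normalised, so $d_{\p}(\xi\OO_K) = \nu(\xi)$ for $\xi \in K^{\times}$ by \Cref{Valuations}, and the last equality is just \Cref{gzP} together with (\ref{nett}). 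Reading $\D = \g\B$ as an identity of ideals then gives
\[
D_{\bm{\alpha}\cdot\bm{P}}^{2} = \Psi_{\bm{\alpha}}(\bm{P})^{2}\OO_K\,\g^{-1} = \Psi_{\bm{\alpha}}(\bm{P})^{2}\OO_K\prod_{\p\,\text{prime}}\p^{-g_{\alpha,\nu}(P)}.
\]

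Finally I would extract ideal square roots to obtain $D_{\bm{\alpha}\cdot\bm{P}} = \Psi_{\bm{\alpha}}(\bm{P})\OO_K\prod_{\p}\p^{-\frac{1}{2}g_{\alpha,\nu}(P)}$, the indeterminacy ``up to a unit'' arising only from the choice of generator of the principal ideal $\Psi_{\bm{\alpha}}(\bm{P})\OO_K$. The one point that needs care — and the step I would flag as the main obstacle — is justifying this square root, i.e. showing $g_{\alpha,\nu}(P)$ is even at every prime. This is automatic here, since $\g = \Psi_{\bm{\alpha}}(\bm{P})^{2}\OO_K\,D_{\bm{\alpha}\cdot\bm{P}}^{-2}$ is a quotient of squares of ideals and hence itself a square; more conceptually, it reflects the fact that the negative part of $x([\alpha]P)\OO_K$ has even exponents, which is precisely the statement that the reduced denominator is the perfect square $D_{\bm{\alpha}\cdot\bm{P}}^{2}$ built into the denominator net of (\ref{notation}). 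Note also that no hypothesis on the class number or on $\Z[\omega]$ being a PID is required, since the entire argument takes place at the level of ideals.
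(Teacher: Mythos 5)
Your proposal is correct and is essentially the paper's own argument: the paper deduces this proposition ``immediately'' from the preceding lemma, applied exactly as you do, taking $\cc = \Phi_{\bm{\alpha}}(\bm{P})\OO_{K}$, $\D = \Psi_{\bm{\alpha}}(\bm{P})^{2}\OO_{K}$ and the reduced factorisation $A_{\bm{\alpha}\cdot\bm{P}}\,D_{\bm{\alpha}\cdot\bm{P}}^{-2}$ of $x([\alpha]P)\OO_{K}$, so that the common factor $\g$ has $d_{\p}(\g)=\min\{\nu(\Phi_{\bm{\alpha}}(\bm{P})),\,2\nu(\Psi_{\bm{\alpha}}(\bm{P}))\}=g_{\alpha,\nu}(P)$ and $D_{\bm{\alpha}\cdot\bm{P}}^{2}=\Psi_{\bm{\alpha}}(\bm{P})^{2}\OO_{K}\,\g^{-1}$. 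Your extra step justifying the ideal square root --- that each $g_{\alpha,\nu}(P)$ is even because $\g=\bigl(\Psi_{\bm{\alpha}}(\bm{P})\OO_{K}\,D_{\bm{\alpha}\cdot\bm{P}}^{-1}\bigr)^{2}$ is a square in the group of fractional ideals --- is a detail the paper leaves implicit, and it is valid.
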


\begin{remark}
If $P$ is non-singular modulo every prime, then this proposition is identical to \Cref{strong}, which tells us that $d_{\p}\left(D_{\bm{\alpha} \cdot \bm{P}}\right)=d_{\p}\left(\hat{\Psi}_{\bm{\alpha}}(\bm{P})\right)$ at good primes.  
\end{remark}

Now we are ready to prove \Cref{general CM recurrence} and \Cref{CM recurrence} as an application of our main result (\Cref{CM gcd}). 
\begin{proof}[Proof of \Cref{general CM recurrence}]
From \Cref{cancellation exponent}, we can set 
\[
\mathfrak{g}_{\alpha} = \prod_{\mathfrak{p}\ \text{prime}} \mathfrak{p}^{\frac{1}{2} g_{\alpha, \nu}(P)}
\]
to be the sequence of \textcolor{red}{fractional} ideals $\{\g_{\alpha}\}_{\alpha \in \Z[\omega]}$ so it satisfies 
\begin{align*}
D_{\alpha}&=\psi_{\alpha}\mathfrak{g}_{\alpha}^{-1}, \,\, \text{and}\\
\psi_{\alpha+\beta}\psi_{\alpha-\beta}\psi_{\gamma}^2 &= \psi_{\alpha+\gamma}\psi_{\alpha-\gamma}\psi_{\beta}^2 - \psi_{\beta+\gamma}\psi_{\beta-\gamma}\psi_{\alpha}^2,\\
\end{align*}
with $\psi_{\alpha}$ as defined in (\ref{nett}). It remains to show that the sequence of ideals $\{\mathfrak{g}_{\alpha}\}_{\alpha \in \mathbb{Z}[\omega]}$ satisfies
\[
\mathfrak{g}_{\alpha+\beta}\mathfrak{g}_{\alpha-\beta}\mathfrak{g}_{\gamma}^2 
= \mathfrak{g}_{\alpha+\gamma}\mathfrak{g}_{\alpha-\gamma}\mathfrak{g}_{\beta}^2 
= \mathfrak{g}_{\beta+\gamma}\mathfrak{g}_{\beta-\gamma}\mathfrak{g}_{\alpha}^2.
\]
If at least two of $\alpha, \beta$ and $\gamma$ are elements of the ideal $\mathfrak{M}(P)$, then the results follow from \Cref{QF recurrence} immediately. To see this, first apply \Cref{QF recurrence} to $\mathfrak{g}_{\alpha+\beta}\mathfrak{g}_{\alpha-\beta}$, then we obtain
\begin{align*}
\mathfrak{g}_{\alpha+\beta}\mathfrak{g}_{\alpha-\beta}\mathfrak{g}_{\gamma}^2  
&= \prod_{\mathfrak{p}\ \text{prime}} 
   \mathfrak{p}^{\frac{1}{2}\left(g_{\alpha+\beta, \nu}(P) + g_{\alpha-\beta, \nu}(P)\right) + g_{\gamma, \nu}(P)}  \\ 
&= \prod_{\mathfrak{p}\ \text{prime}} 
   \mathfrak{p}^{\frac{1}{2}\left(2g_{\alpha, \nu}(P) + 2g_{\beta, \nu}(P)\right) + g_{\gamma, \nu}(P)}  \\ 
&= \prod_{\mathfrak{p}\ \text{prime}} 
   \mathfrak{p}^{g_{\alpha, \nu}(P) + g_{\beta, \nu}(P) + g_{\gamma, \nu}(P)}. 
\end{align*}
For $\mathfrak{g}_{\alpha+\gamma}\mathfrak{g}_{\alpha-\gamma}\mathfrak{g}_{\beta}^2 $, we apply \Cref{QF recurrence} to $\mathfrak{g}_{\alpha+\gamma}\mathfrak{g}_{\alpha-\gamma}$ instead and similarly for $\mathfrak{g}_{\beta+\gamma}\mathfrak{g}_{\beta-\gamma}\mathfrak{g}_{\alpha}^2$.
\qedhere
\end{proof}

\begin{proof}[Proof of \Cref{CM recurrence}]
By definition of elliptic net, the sequence $\{\psi_{\alpha}\}_{\alpha \in \Z[\omega]}$ satisfies
\[
\psi_{\alpha+\beta}\psi_{\alpha-\beta}\psi_{\gamma}^2=\psi_{\alpha+\gamma}\psi_{\alpha-\gamma}\psi_{\beta}^2-\psi_{\beta+\gamma}\psi_{\beta-\gamma}\psi_{\alpha}^2\,\, \text{for any $\alpha, \beta, \gamma \in \Z[\omega]$.}
\]
Following the method in \cite[Theorem 1.9]{Verzobio}, we divide both sides by 
\[
\prod_{p\,\,\text{\emph{prime}}} p^{-\frac{1}{2} \left( g_{\alpha, \nu} + g_{\beta, \nu} + g_{\gamma, \nu} \right)}.
\]
But this is equivalent to multiplying both sides by $\mathfrak{g}_{\alpha+\beta}\mathfrak{g}_{\alpha-\beta}\mathfrak{g}_{\gamma}^2 $ since \textcolor{red}{$\OO_K$} is a principal ideal domain, which means all prime ideals are principal ideals generated by prime elements (i.e. $\p=(p)$).  Furthermore, in \Cref{general CM recurrence} we have also shown that 
\[
\mathfrak{g}_{\alpha+\beta}\mathfrak{g}_{\alpha-\beta}\mathfrak{g}_{\gamma}^2 = \mathfrak{g}_{\alpha+\gamma}\mathfrak{g}_{\alpha-\gamma}\mathfrak{g}_{\beta}^2 = \mathfrak{g}_{\beta+\gamma}\mathfrak{g}_{\beta-\gamma}\mathfrak{g}_{\alpha}^2.
\]
Therefore, we obtain
\[
\frac{\psi_{\alpha+\beta}\psi_{\alpha-\beta}\psi_{\gamma}^2}{\mathfrak{g}_{\alpha+\beta}\mathfrak{g}_{\alpha-\beta}\mathfrak{g}_{\gamma}^2}
=\frac{\psi_{\alpha+\gamma}\psi_{\alpha-\gamma}\psi_{\beta}^2}{\mathfrak{g}_{\alpha+\gamma}\mathfrak{g}_{\alpha-\gamma}\mathfrak{g}_{\beta}^2} 
- \frac{\psi_{\beta+\gamma}\psi_{\beta-\gamma}\psi_{\alpha}^2}{\mathfrak{g}_{\beta+\gamma}\mathfrak{g}_{\beta-\gamma}\mathfrak{g}_{\alpha}^2}.
\]
Recall that we have chosen a sequence of generators $D_{[\alpha]P}=(B_{\alpha})$, so by \Cref{general CM recurrence}, we have $B_{\alpha}=\psi_{\alpha}\g_{\alpha}^{-1}$ as elements in $K$ and 
\[
B_{\alpha+\beta}B_{\alpha-\beta}B_{\gamma}^2=B_{\alpha+\gamma}B_{\alpha-\gamma}B_{\beta}^2-B_{\beta+\gamma}B_{\beta-\gamma}B_{\alpha}^2
\]
as required. 
\end{proof}
\subsubsection*{Acknowledgements}
This work was undertaken under the supervision of Dr.\ Simon L.\ Rydin Myerson, and Dr.\ Helena Verill, whom the author thanks for their continued guidance and support, as well as for many discussions and insightful comments on this work. The author would also like to thank the anonymous referee for the careful reading of the manuscript and many helpful comments
\appendix
\section{Examples for illustrating \Cref{CM gcd} and \Cref{general CM recurrence}}\label{Examples}

\begin{example}\label{Example 1}
We consider the elliptic curve 
\(
E \colon y^{2} = x^{3} - 2x
\)
over the field \( \mathbb{Q}(i) \), which has complex multiplication by 
\(\mathbb{Z}[i] = \mathbb{Z}[\omega]\) and discriminant 
\(\Delta = (1+i)^{18}\).
The group \(E\big(\mathbb{Q}(i)\big)\) is generated by the two points 
\(P = (-1, 1)\) and \(\omega P = (1, i)\).
The curve has only one bad prime, namely \(1+i\), but both \(P\) and \([i]P\) have good reduction at this prime. 
Thus, throughout this example we are always in the setting of \Cref{CM gcd}(1).
Equivalently, the common valuations between 
\(\Psi^{2}_{\bm{v}}(\bm{P})\) and \(\Phi_{\bm{v}}(\bm{P})\) depend solely on the valuation of the quadratic form \(F_{\bm{v}}(\bm{P})\). Therefore, we omit the computation of \(\Phi_{\bm{v}}(\bm{P})\).

Furthermore, since \(\mathbb{Z}[i]\) is a principal ideal domain, \(\widehat{\Psi}_{\bm{v}}(\bm{P})\) is a scaled elliptic net associated to \(\Psi_{\bm{v}}(\bm{P})\), and \Cref{Table 1} shows that 
\(
\widehat{\Psi}_{\bm{\alpha}}(\bm{P}) = B_{\alpha}
\)
for all \(\alpha \in \mathbb{Z}[i]\), where \(B_{\alpha}\) is the generator of the denominator ideal \(D_{[\alpha]P}\). This reflects the fact that the point \(P\) has good reduction at all primes; hence the constant \(M(P)\) defined in \Cref{M(P)} is simply \(1\). Since \(\widehat{\Psi}_{\bm{\alpha}}(\bm{P})\) is an elliptic net, it satisfies the recurrence relation in \Cref{elliptic net} for all \(\alpha \in \mathbb{Z}[i]\), and therefore the sequence of generators \(\{B_{\alpha}\}_{\alpha \in \mathbb{Z}[i]}\) satisfies the recurrence relation stated in \Cref{CM recurrence}. More specifically, in this case the sequence of ideals \(\{\mathfrak{g}_{\alpha}\}_{\alpha \in \mathbb{Z}[i]}\) in \Cref{general CM recurrence} is simply the sequence of quadratic forms \(\{F_{\bm{\alpha}}(\bm{P})\}_{\alpha \in \mathbb{Z}[i]}\).
\end{example}

\begin{example}\label{Example 2}
We consider the elliptic curve 
\(
E \colon y^2 = x^3 + x^2 - 3x + 1
\)
over the field \( \mathbb{Q}(\sqrt{-2}) \), which has complex multiplication by 
\(\mathbb{Z}[\sqrt{-2}] = \mathbb{Z}[\omega]\) and discriminant 
\(\Delta = 512 = 2^{10} = (\sqrt{-2})^{20}\).
The group \(E\big(\mathbb{Q}(\sqrt{-2})\big)\) is generated by the two points 
\(P = (-1, 2)\) and 
\(\omega P = \left( -\frac{1}{\omega^2}, \frac{1}{\omega^3} \right)\).
The points \(P\) and \(P + [\omega]P\) have singular reduction modulo \(\sqrt{-2}\), whereas \([\omega]P\) does not. Since \(\mathbb{Z}[\sqrt{-2}]\) is a principal ideal domain, we have 
\(\Ann(P) = (\sqrt{-2})\), and we choose \(r = \sqrt{-2}\). 
Let \(\nu\) be the valuation corresponding to \(r = \sqrt{-2}\). Then
\[
\mu(r) = \nu(\phi_{\omega}) = -2, \qquad 
\mu_{\omega}(r) = \nu(\phi_{-2}) = \nu(20) = 4,
\]
and
\[
\mu_{1+\omega}(r) 
  = \nu(\phi_{-2+\omega})
  = \nu\!\left( \frac{(1-\omega)^4(-3 + 8\omega)}{2} \right)
  = -2.
\]

In \Cref{Table 2} we list the coordinates of several points in \(E\big(\mathbb{Q}(\sqrt{-2})\big)\), 
the values of the net polynomials \(\Psi_{\bm{v}}(\bm{P})\) and \(\Phi_{\bm{v}}(\bm{P})\) 
associated to \(E\) and the points \(P\) and \(\omega P\), as well as the generators \(B_{\alpha}\) 
of the denominator ideals. Note that the only bad prime for \(E/\mathbb{Q}(\sqrt{-2})\) is \(\sqrt{-2}\).
As predicted by \Cref{CM gcd}(1), for points \([\alpha]P \in E\big(\mathbb{Q}(\sqrt{-2})\big)\) 
with \(\alpha\) a multiple of \(\sqrt{-2}\), the valuation of the quadratic form 
\(F_{\bm{v}}(\bm{P})\) coincides with \(g_{\alpha,\nu}(P)\) at all primes except the one corresponding to \(\sqrt{-2}\). In this case, \Cref{CM gcd}(2) correctly determines the cancellation exponent. 

For example, consider \([2+2\sqrt{-2}]P\).  
At the prime \(1-\sqrt{-2}\) we have \(g_{2+\sqrt{-2},\,1-\sqrt{-2}}(P) = -8\), which equals the valuation of \(F_{(2,2)}^{-2}(\bm{P})\).  
On the other hand, at the bad prime \(\sqrt{-2}\) we obtain \(g_{2+\sqrt{-2},\,\sqrt{-2}}(P) = 4\), a contribution entirely absent from \(F_{(2,2)}(\bm{P})^{2}\).

As in \Cref{Example 1}, in order for the sequence of generators \(\{B_{\alpha}\}_{\alpha \in \mathbb{Z}[\sqrt{-2}]}\) of the denominator ideals \(\{D_{[\alpha]P}\}_{\alpha \in \mathbb{Z}[\sqrt{-2}]}\) to satisfy the recurrence relation in \Cref{CM recurrence}, we require 
\(\widehat{\Psi}_{\bm{\alpha}}(\bm{P}) = B_{\alpha}\).
This occurs precisely when \([\alpha]P\) has good reduction everywhere, so we must have 
\(\alpha \in (\sqrt{-2}) = \mathfrak{M}(P)\).
Once again, the sequence of ideals \(\{\mathfrak{g}_{\alpha}\}_{\alpha \in \mathbb{Z}[\sqrt{-2}]}\) 
in \Cref{general CM recurrence} is simply the sequence of quadratic forms 
\(\{F_{\bm{\alpha}}(\bm{P})\}_{\alpha \in \mathbb{Z}[\sqrt{-2}]}\).
\end{example}

\newgeometry{top=0.4in,bottom=0.4in,left=0.75in,right=0.75in}
\begin{landscape}
\pagestyle{empty}

\begin{table}[H]
    \centering
    \arrayrulecolor{black}
    \setlength{\arrayrulewidth}{1pt}
    \renewcommand\arraystretch{1.5}
    \makebox[\linewidth]{%
    \begin{tabular}{c|c|c|c|c|c}
        $\alpha \in \Z[i]$ & $[\alpha] P$ & $B_{\alpha}$ & $\Psi_{\bm{v}}(\bm{P})$ & $F_{\bm{v}}(\bm{P})$ & $\hat{\Psi}_{\bm{v}}(\bm{P})$ \\
        \hline
        $1$ & $(-1, 1)$ & $1$ & $1$ & $1$ & $1$ \\
        $i$ & $(1, i)$ & $1$ & $1$ & $1$ & $1$ \\
        $1+i$ & $\left(-\frac{i}{2}, \frac{-3i-3}{4}\right)$ & $1+i$ & $1$ & $1+i$ & $1+i$ \\
        $1-i$ & $\left(\frac{i}{2}, \frac{3i-3}{4}\right)$ & $1+i$ & $2$ & $(1+i)^{-1}$ & $1-i$ \\
        $1+2i$ & $\left(\frac{(1+4i)^{2}}{(2+i)^{2}}, -\frac{(4+i)(16+9i)}{(1+2i)^{3}}\right)$ & $2+i$ & $1+\frac{i}{2}$ & $(1+i)^{2}$ & $i(2-i)$ \\
        $1-2i$ & $\left(\frac{(4+i)^{2}}{(1+2i)^{2}}, -\frac{(4+i)(16+9i)}{(1+2i)^{3}}\right)$ & $2-i$ & $2(2-i)$ & $(1+i)^{-2}$ & $-(1+2i)$ \\        
        $2$ & $\left(\frac{9}{4}, -\frac{21}{8}\right)$ & $2$ & $2$ & $1$ & $2$ \\
        $2i$ & $\left(-\frac{9}{4}, -\frac{21}{8}i\right)$ & $2$ & $2i$ & $1$ & $2i$ \\    
        $2+i$ & $\left(-\frac{(4+i)^{2}}{(1+2i)^{2}}, \frac{(4+i)(16+9i)}{i(1+2i)^{3}}\right)$ & $2-i$ & $-1+\frac{i}{2}$ & $(1+i)^{2}$ & $-i(2-i)$ \\
        $2-i$ & $\left(-\frac{(1+4i)^{2}}{(2+i)^{2}}, \frac{(4+i)(16+9i)}{i(1+2i)^{3}}\right)$ & $2+i$ & $2(2+i)$ & $(1+i)^{-2}$ & $-i(2+i)$ \\
        $2+2i$ & $\left(-\frac{7^{2}}{3^{2}(1+i)^{6}}, \frac{(8-7i)(8+7i)}{3^{3}(1+i)^{9}}\right)$ & $3(1+i)^{3}$ & $-\frac{3}{1-i}$ & $(1+i)^{4}$ & $-3i(1+i)^{3}$ \\
        $2-2i$ & $\left(\frac{7^{2}}{3^{2}(1+i)^{6}}, \frac{-i(8-7i)(8+7i)}{3^{3}(1+i)^{9}}\right)$ & $3(1+i)^{3}$ & $-3(1+i)^{7}$ & $(1+i)^{-4}$ & $-3(1+i)^{3}$ \\
        $3$ & $\left(-\frac{1}{169}, \frac{239}{2197}\right)$ & $(3+2i)(3-2i)$ & $-13$ & $1$ & $-13$ \\
        $3+i$ & $\left(\frac{(9+16i)^{2}}{(1+i)^{2}(2+i)^{2}(1+4i)^{2}}, \frac{3(3+8i)(5+8i)(9+16i)}{(1+i)^{3}(2+i)^{3}(1+4i)^{3}}\right)$ & $(1+i)(2+i)(4-i)$ & $-\frac{(2+i)(1+4i)}{(1+i)^{2}}$ & $(1+i)^{3}$ & $-i(1+i)(4-i)(2+i)$\\
        $3-i$ & $\left(\frac{(16+9i)^{2}}{(1+i)^{2}(1+2i)^{2}(4+i)^{2}}, \frac{3(3+8i)(5+8i)(9+16i)}{(1+i)^{3}(2+i)^{3}(1+4i)^{3}}\right)$ & $(1+i)(2+i)(4-i)$ & $-i(1+2i)(4+i)(1+i)^{4}$ & $(1+i)^{-3}$ & $-i(1+i)(4+i)(1+2i)$\\
        $3+2i$ & $\left(-\frac{(31+20i)^{2}}{(1+6i)^{2}(5+4i)^{2}}, -\frac{(31+20i)(999+1360i)}{(1+6i)^{3}(5+4i)^{3}}\right)$ & $(5+4i)(6-i)$ & $\frac{(6-i)(5+4i)}{i(1+i)^{6}}$ & $(1+i)^{6}$ & $i(6-i)(5+4i)$ \\
        $3+3i$ & $\left(\frac{239^{2}}{(1+i)^{2}(3+2i)^{2}(3-2i)^{2}}, \frac{i(3^{2})(11)(239)(24+i)(1+24i)}{(1+i)^{3}(3+2i)^{3}(2+3i)^{3}}\right)$ & $(1+i)(3+2i)(3-2i)$ & $-\frac{13}{16}$ & $(1+i)^{9}$ & $-i(1+i)(3+2i)(2+3i)$
    \end{tabular}}
\caption{Comparison of the values of the denominator ideal generators \( \left( B_{\alpha} \right)_{\alpha \in \mathbb{Z}[\omega]} \) and the elliptic nets \( \Psi_{\bm{v}}(\bm{P}), \Phi_{\bm{v}}(\bm{P}) \) associated with the curve $E \colon y^{2}=x^{3}-2x$ over $\Q(i)$, where \( \bm{P} = (P_1, P_2) = (P, \omega P) \), and \( P = (-1, 1) \). For \( \alpha = a + b\omega \), we take \( \bm{v} = (a, b) \).
}
\label{Table 1}
\end{table}

\newpage

\pagestyle{empty}
\begin{table}[H]
    \vfill
    \centering
    \arrayrulecolor{black}
    \setlength{\arrayrulewidth}{1pt}
    \renewcommand\arraystretch{1.5}
    \adjustbox{max width=\linewidth}{%
    \begin{tabular}{c|c|c|c|c|c|c}
        $\alpha \in \mathbb{Z}[\omega]$ & $[\alpha] P$ & $B_{\alpha}$ & $\Psi_{\bm{v}}(\bm{P})$ & $F_{\bm{v}}(\bm{P})$ & $\widehat{\Psi}_{\bm{v}}(\bm{P})$ & $\Phi_{\bm{v}}(\bm{P})$\\
        \hline
        $1$ & $(-1, 2)$ & $1$ & $1$ & $1$ & $1$ & $-1$\\
        $\omega$ & $\left(-\frac{1}{\omega^2}, \frac{1}{\omega^3}\right)$ & $\omega$ & $1$ & $\omega$ & $\omega$    &   $-\frac{1}{\omega^2}$ \\
        $1+\omega$ & $\left(\frac{(-3-2\omega)}{(1-\omega)^{2}}, \frac{-22+26\omega}{3^3}\right)$ & $1-\omega$ & $1$ & $1-\omega$ & $1-\omega$  &   $-\frac{3+2\omega}{(1-\omega)^2}$ \\
        $1-\omega$ & $\left(\frac{-3+2\omega}{(1+\omega)^{2}}, \frac{-22-26\omega}{3^3}\right)$ & $1+\omega$ & $-\frac{(1+\omega)(1-\omega)}{\omega^2}$ & $\frac{\omega^2}{1-\omega}$ & $-(1+\omega)$  &   $\frac{(1-\omega)^2(3-2\omega)}{\omega^4}$ \\
        $1+2\omega$ &   $\left(\frac{7(9-4\omega)}{(1+\omega)^2(3-\omega)^2}, \frac{-84196\omega-99842}{3^3\cdot11^3}\right)$    &   $(1+\omega)(3-\omega)$  &  $-\frac{(-3+\omega)(1+\omega)}{\omega^2(1-\omega)^2}$ &   $\omega^2(1-\omega)^2$   &   $-(3-\omega)(1+\omega)$  &   $\frac{7(9-4\omega)}{\omega^4(1-\omega^4)}$\\
        $1-2\omega$ &   $\left(\frac{7(9+4\omega)}{(1-\omega)^2(3+\omega)^2}, \frac{84196\omega - 99842}{3^3\cdot 11^3}\right)$  &   $(1-\omega)(3+\omega)$  &   $-\frac{(1-\omega)^3(3+\omega)}{\omega^6}$  &  $\frac{\omega^6}{(1-\omega)^2}$ &   $-(1-\omega)(3+\omega)$	&	$\frac{7(1-\omega)^4(9-4\omega)}{\omega^{12}}$ \\
        $2$ &   $\left(\frac{5}{4}, \frac{7}{8}\right)$ &   $\omega^2$    & $\omega^4$ &   $1$ &  $\omega^4$ &    $20$\\
        $2\omega$   &   $\left(-\frac{41}{8}, -\frac{217}{32\omega}\right)$ &    $\omega^3$  &   $-\frac{1}{\omega}$ &   $\omega^4$  &   $-\omega^3$	&	$\frac{41}{16}$\\
        $2+\omega$  &   $\left(\frac{3+8\omega}{\omega^2(1+\omega)^2}, \frac{5\omega+322}{3^3\cdot 2^2}\right)$ &   $\omega(1+\omega)$  &   $\frac{\omega^4(1+\omega)}{(1-\omega)^2}$ &   $\frac{(1-\omega)^2}{\omega}$   & $\omega^3(1+\omega)$    &   $\frac{\omega^6(3+2\omega)^2}{(1-\omega)^4}$  \\
        $2-\omega$  &   $\left(\frac{3-8\omega}{\omega^2(1-\omega)^2}, \frac{332-5\omega}{108}\right)$    &   $\omega(1-\omega)$  &  $-(1-\omega)^3$   & $\frac{\omega^3}{(1-\omega)^2}$ &   $\omega^3(1-\omega)$   &   $\frac{(1-\omega)^4(-3+8\omega)}{2}$\\
        $2+2\omega$ &  $\left(\frac{-147-32\omega}{\omega^4(1-\omega)^2(3-2\omega)^2}, \frac{656425\omega - 179123}{2^3\cdot 3^3 \cdot 17^3}\right)$ &  $\omega^2(1-\omega)(3-2\omega)$ &   $-\frac{\omega^4(3-2\omega)}{(1-\omega)^3}$   &  $(1-\omega)^4$  &   $\omega^4(1-\omega)(3-2\omega)$  &	$\frac{\omega^4(-147-32\omega)}{(1-\omega)^8}$\\
    \end{tabular}}
\caption{Comparison of the values of the denominator ideal generators \( \left( B_{\alpha} \right)_{\alpha \in \mathbb{Z}[\omega]} \) and the elliptic nets \( \Psi_{\bm{v}}(\bm{P}), \Phi_{\bm{v}}(\bm{P}) \) associated with the curve \( E: y^2 = x^3 + x^2 - 3x + 1 \) over \( \mathbb{Q}(\sqrt{-2}) \), where \( \bm{P} = (P_1, P_2) = (P, \omega P) \), and \( P = (-1, 2) \). For \( \alpha = a + b\omega \), we take \( \bm{v} = (a, b) \).
}
\label{Table 2}
\vfill
\end{table}
\begin{remark}
Note that this curve is isomorphic to the curve 
\(E \colon y^2 = x^3 + 4x^2 + 2x\) given by Silverman in 
\cite[Proposition~2.3.1, p.~111]{Silverman2}, via the change of variables \(x = x' + 1\).
\end{remark}
\end{landscape}
 \restoregeometry

\bibliographystyle{abbrv}
\bibliography{Reference2}

\end{document}